\documentclass{article}
\usepackage{import}
\usepackage[a4paper,margin=1.0in]{geometry}
\usepackage{bm}
\usepackage{psfrag}
\usepackage[usenames,dvipsnames]{color}
\usepackage{xcolor}
\usepackage[all,cmtip,line]{xy}
\usepackage{tikz}
\usepackage[utf8]{inputenc}
\usepackage{pgfplots}
\pgfplotsset{compat=1.18}
\usepackage{amssymb}
\usepackage{caption}
\usepackage{subcaption}
\usepackage{amsmath}
\usepackage{mathabx}
\usepackage[shortlabels]{enumitem}
\usepackage{comment}
\usepackage{algorithm}
\usepackage{algpseudocode}
\usepackage{indentfirst}

\usepackage{mathtools}
\numberwithin{equation}{section}

\usepackage[T3,T1]{fontenc}
\DeclareSymbolFont{tipa}{T3}{cmr}{m}{n}
\DeclareMathAccent{\invbreve}{\mathalpha}{tipa}{16}

\usepackage{amsthm}

\usepackage{cleveref}
\crefname{definition}{definition}{definitions}
\Crefname{definition}{Definition}{Definitions}
\Crefname{problem}{Problem}{Problems}
\crefname{problem}{problem}{problems}
\Crefname{proposition}{Proposition}{Propositions}
\crefname{proposition}{proposition}{propositions}
\crefname{remark}{remark}{remarks}
\Crefname{remark}{Remark}{Remarks}
\crefname{assumption}{assumption}{assumptions}
\Crefname{assumption}{Assumption}{Assumptions}

\newtheorem{theorem}{Theorem}[section]
\newtheorem{lemma}[theorem]{Lemma}
\newtheorem{proposition}[theorem]{Proposition}

\newtheorem{definition}[theorem]{Definition}

\newtheorem{remark}[theorem]{Remark}
\newtheorem{assumption}[theorem]{Assumption}
\newtheorem{conjecture}[theorem]{Conjecture}

\definecolor{forestgreen}{rgb}{0.13, 0.55, 0.13}

\newcommand{\jp}[1]{{\color{blue}#1}}
\newcommand{\ra}[1]{{\color{magenta}#1}}

\newcommand{\be}{\begin{equation}}
	\newcommand{\ee}{\end{equation}}

\usepackage{algorithm}
\usepackage{algpseudocode}

\newcommand{\C}{\mathcal C}
\newcommand{\cD}{\mathcal D}

\newcommand{\cO}{\mathcal O}

\newcommand{\cS}{\mathcal S}

\newcommand{\nno}{\nonumber}

\newcommand{\dup}[2]{\langle #1, #2\rangle}

\newcommand{\norm}[1]{\left \lVert #1 \right \rVert}
\newcommand{\snorm}[1]{\left \lvert #1 \right \rvert}

\newcommand{\IC}{{\mathbb C}}

\newcommand{\IN}{{\mathbb N}}

\newcommand{\IR}{{\mathbb R}}

\newcommand{\IP}{{\mathbb P}}

\newcommand{\IZ}{{\mathbb Z}}

\renewcommand{\d}{\!\!\operatorname{d}\!}

\newcommand{\fpcs}[1]{\operatorname{\mathsf{f}\pc}}

\newcommand{\supp}{\operatorname{supp}}

\newcommand{\abs}[1]{\left|#1\right|}

\newcommand{\pc}{}

\newcommand{\loc}{{\mathrm{\text{loc}}}}

\newcommand{\VH}{\bm{H}}

\newcommand{\bw}{\bm{w}}

\newcommand{\bv}{\bm{v}}
\newcommand{\br}{\bm{r}}

\newcommand{\bx}{\bm{x}}
\newcommand{\by}{\bm{y}}

\newcommand{\href}{\VH^{\mathrm{ref}}}

\newcommand{\half}{\frac{1}{2}}
\newcommand{\threehalf}{\frac{3}{2}}

\usepackage[utf8]{inputenc}
\usepackage[english]{babel}
\usepackage{graphicx}
\usepackage{parskip}
\usepackage{imakeidx}
\usepackage{geometry}
\title{Integral Formulations for two-dimensional Multi-Arcs}
\author{Jose Pinto\footnote{Universidad Adolfo Ibañez, jose.pinto@uai.cl} and Ruben Aylwin\footnote{Universität Ulm, ruben.aylwin-pincheira@uni-ulm.de}} \date{January 2026}

\begin{document}

\maketitle
\abstract{We study the Laplace equation with Dirichlet and Neumann boundary conditions posed on multi-arcs, i.e., collections of open arcs meeting at junction points. 
We begin by introducing a scale of Sobolev spaces constructed using the Sobolev spaces on open arcs as main building block and extend the definition of trace operators.

We reformulate the boundary value problems using boundary integral formulations. We then establish a well-posed integral formulation for the Dirichlet problem, which can be discretized using standard numerical methods.

We further investigate the singular behavior of the solution densities at branch points through numerical experiments and observe that these singularities are comparable to the corner singularities arising in polygonal domains.

For the Neumann problem, we show that the associated hypersingular operator is not necessarily invertible on classical Sobolev spaces and provide numerical evidence that solutions may develop jump discontinuities at branch points.}


\section{Introduction}

In this work, we study the Laplace equation
\begin{equation}
\begin{split} 
  -\Delta U &= 0, \quad \text{on }\IR^2 \setminus \overline{\Gamma}, \\
  U(\bx) &= \cO(1), \quad \text{as } \Vert\bx\Vert \rightarrow \infty.
  \end{split}
  \label{eq:lap}
\end{equation}
We focus on the case where $\Gamma$ is a \emph{multi-arc}, that is, a finite collection of open arcs whose pairwise intersections are either empty or consist of a single point.  Such configurations naturally give rise to geometries with corners and branch points; see, for example, \Cref{fig:NeumanGeo}.
On $\Gamma$, we impose one of the following boundary conditions: 
 $$
 U|_\Gamma = f \ \ \text{(Dirichlet Condition),} \quad \bm{n} \cdot (\nabla U)|_\Gamma = f \ \ \text{(Neumann Condition),} 
 $$
where $f$ is a given function and $\bm{n}$ denotes a unit normal vector to $\Gamma$. These conditions define what we shall refer to as the \emph{Dirichlet problem} and the \emph{Neumann problem}, respectively.

Since the problem in \cref{eq:lap} is posed on an unbounded domain, $\IR^2 \setminus \overline{\Gamma}$, with bounded boundary $\Gamma$, our analysis is carried out within the framework of Boundary Integral Equations (BIEs).

The theory of BIEs typically involves two main steps:
\begin{enumerate}
  \item Reformulating the boundary value problem as an integral equation and establishing its well-posedness.
  \item Developing accurate and efficient numerical approximations of the resulting integral equation.
\end{enumerate}
Regarding the first point, classical treatments (see, for instance, \cite{mclean2000strongly,Sauter:2011,STEI07}) mainly address partial differential equations posed on bounded Lipschitz domains, or their exterior. In such cases, the boundary is a closed curve (or closed surface in higher dimensions). Extensions to problems with boundaries given by open arcs are also available, typically by embedding the arc into a closed boundary; see, for example, \cite{stephan1984augmented} and \cite[Sec.~3.5.3]{Sauter:2011}.

Concerning the numerical approximation of boundary integral equations, many aspects must be taken into account. In this work, we focus specifically on the singular behavior of solutions to the underlying integral equations. Singularities at corners are by now well understood; see, for example, \cite{grisvard2011elliptic,costabel1985boundary,gwinner2018advanced}. Singularities arising on open arcs can be interpreted as a limiting case of corner singularities with opening angle $2\pi$, with refined analyses available in \cite{COSTABEL,costabel2003}.

For multi-arcs, however, additional difficulties arise. BIEs are usually formulated in fractional Sobolev spaces defined on the boundary of the domain. When the boundary is either a closed curve or an open arc, it can be viewed as a Lipschitz manifold, allowing the definition of Sobolev spaces via local charts; see \cite[Chap.~3]{mclean2000strongly}. This approach cannot be directly generalized to multi-arcs, since at branch points the boundary cannot be locally represented as the graph of a function. Even more general frameworks, such as that of \cite{chandler2017sobolev}, are not directly applicable, as they address non-Lipschitz domains of codimension zero.

To the best of our knowledge, the only general approach currently available is due to Claeys and Hiptmair~\cite{Claeys2013}. There, the authors propose a novel construction of Sobolev spaces on multi-arcs based on quotient spaces. Starting from $H^1(\IR^2 \setminus \overline{\Gamma})$ (see Section~2 for a precise definition), they define
$$
\mathbb{H}^{+\half}(\Gamma) \coloneq H^1(\IR^2\setminus \overline{\Gamma}) / H^1_{0,{\Gamma}}(\IR^2),
$$
where $H^1_{0,\Gamma}(\IR^2)$ denotes the subspace of $H^1(\IR^2)$ consisting of functions that vanish in a neighborhood of $\Gamma$. They also introduce
$$
H^{+\half}([\Gamma]) \coloneq H^1(\IR^2) / H^1_{0,{\Gamma}}(\IR^2),
$$
which consists of functions in $\mathbb{H}^{+\half}(\Gamma)$ that coincide on both sides of $\Gamma$, and define $\widetilde{H}^{-\half}([\Gamma])$ as its dual space. Analogous constructions are proposed for $\mathbb{H}^{-\half}(\Gamma)$, $H^{-\half}([\Gamma])$, and $\widetilde{H}^{+\half}([\Gamma])$, based on the range of the operator $\bm{U} \mapsto \bm{n} \cdot \bm{U}|_{\Gamma}$. Within this framework, the usual properties of Sobolev spaces are recovered, integral formulations for the Dirichlet and Neumann problems are derived and shown to be well posed, and numerical results are reported in \cite{averseng2023boundaryelementmethodslaplace}. A key advantage of this construction is its generality: it applies not only to multi-arcs but also to higher-dimensional screen problems with complex intersection patterns.

Another important distinction between the multi-arc and the classical settings concerns the definition of the Neumann trace. In the classical framework, the Neumann trace is defined as an extension of the normal derivative via Green's identities. Although Green's formulas can be extended to the multi-arc case, this naturally leads to an operator describing jumps of normal derivatives, which, while essential for the analysis of integral formulations, does not constitute a satisfactory notion of a Neumann trace.

In \cite{Claeys2013}, this issue is addressed by interpreting the Neumann trace as the canonical surjection associated with the trace operator $\bm{U} \mapsto \bm{n} \cdot \bm{U}|_{\Gamma}$. The resulting analysis closely parallels that of the Dirichlet trace.

Regarding the singular behavior of solutions to the integral formulations on multi-arcs, the only available results appear in \cite{hoskins2020solution}, where the authors analyze singularities arising in transmission problems for composite media near triple junctions. In particular, they prove the presence of singular terms of the form $t^{\beta}$, where $t$ denotes the distance to the triple junction and $\beta$ is the order of the singularity. 

In this work, we address the issues outlined above. Our main contributions can be summarized as follows:
\begin{enumerate}
  \item We introduce a scale of Sobolev spaces for multi-arcs based on classical spaces for open arcs, whose discretization for the Dirichlet problem can be carried out using standard boundary element method implementations.
  \item 
  We define a Neumann trace operator for multi-arcs based on the Neumann trace on open arcs, and analyze it within the newly introduced Sobolev framework.
  \item 
We numerically investigate the singular behavior of solutions to the integral formulation of the Dirichlet problem, showing that the singular exponents are determined in a manner analogous to classical corner singularities.
\item 
We investigate the invertibility of the Neumann integral formulation, and conclude that, in general, it is not possible to discretize the corresponding problem on the classical spaces. 
\end{enumerate}

In contrast to the general approach of \cite{Claeys2013}, our motivation for defining regularity spaces on multi-arcs stems from a simpler observation. Consider the geometry shown in \Cref{fig:NeumanGeo}. If one wishes to define continuity on $\Gamma$ without reference to functions in $\IR^2$, it is not sufficient to require continuity on each individual arc $\Gamma_1$, $\Gamma_2$, and $\Gamma_3$. However, if continuity is imposed on $\Gamma \cap \partial \Omega_1$, $\Gamma \cap \partial \Omega_2$, and $\Gamma \cap \partial \Omega_3$, then a globally continuous function on $\Gamma$ is obtained. This idea naturally extends to Sobolev spaces: we define $H^{\half}(\Gamma)$ as the space of functions that belong simultaneously to $H^{\half}(\Gamma \cap \partial \Omega_1)$, $H^{\half}(\Gamma \cap \partial \Omega_2)$, and $H^{\half}(\Gamma \cap \partial \Omega_3)$. More generally, this yields a family of Sobolev spaces on $\Gamma$ constructed from the corresponding spaces on the individual arcs composing the multi-arc. Besides its conceptual simplicity, this approach allows us to define the full Sobolev scale and to inherit standard properties directly from the theory on open arcs. Furthermore, we will show that there exist some equivalences between the spaces constructed here and the ones from \cite{Claeys2013}.

This construction can be further extended to define the Neumann trace operator, and also the single-layer and double-layer potentials on $\Gamma$ by assembling the corresponding operators on the individual arcs. The resulting operators inherit most of the mapping and analytical properties of the classical layer potentials defined on open arcs, but with some added restrictions regarding the orientation of the normal vectors.

The remainder of the article is organized as follows. \Cref{sec:theoreticalBac} is devoted to the theoretical aspects of boundary integral formulations for multi-arcs. In \Cref{sec:SobSpaces}, we provide a precise definition of a multi-arc and present the construction of the associated Sobolev spaces. \Cref{sec:Traces} addresses the definition of the Dirichlet trace and the Neumann jump. \Cref{sec:DirProb,sec:NeuProb} are devoted to the formulation of the Dirichlet and Neumann problems, respectively; in the latter, we also define the Neumann trace for multi-arcs. \Cref{sec:SingDir} investigates the singularities of the Dirichlet problem. In particular, \Cref{sec:striplejuct} provides a rigorous analysis for a symmetric triple junction, while \Cref{sec:Discretrization,sec:NumRes} describe the discretization scheme and numerical experiments used to study more general configurations. In \Cref{sec:NeumannNumeric} we provide numerical experiments for the Neumann problem, showing that in general this cannot be discretized using classical schemes. Finally, conclusions and our conjecture on the structure of singularities for the Dirichlet problem are presented in \Cref{sec:conclusions}.

\section{Theoretical Background}
\label{sec:theoreticalBac}
\subsection{Preliminaries}

Throughout this work, vectors are denoted through boldface symbols. For any $\bv \in \IR^n$, with $n \in \IN$, we denote the
Euclidean norm $\norm{\bv} = \sqrt{\bv \cdot \bv}$, and the open ball centered at $\bv$ with radius $r >0$,
as $B_{\bv}(r) \coloneq \{ \bw \in \IR^2 : \Vert \bw -\bv \Vert < r \}$. We use the notation $a \lesssim b $ to
indicate that there exists a constant $c>0$, independent of the relevant quantities of the corresponding analysis,
such that $a \leq c b$. Whenever both $a \lesssim b$ and $b \lesssim a$ hold, we write $a \cong b$. 

Given two normed spaces $X$ and $Y$, we write $X \hookrightarrow Y$ if there exists a linear, bounded, injective mapping from
$X$ into $Y$. We write $X\cong Y$ if $X \hookrightarrow Y$ and $Y \hookrightarrow X$. The dual space of $X$ is denoted by $X^\star$.

Let $\Omega\subset \IR^{d}$ be an open and connected set, where $d \in \{1,2\}$. For $m \in \IN_0:=\IN\cup \{0\}$ and $p\in \IN$, we denote by $\C^{m}(\Omega,\IR^{p})$ the space of $\IR^{p}$ valued functions whose derivatives up to order $m$ extend continuously to $\overline{\Omega}$ and, for any $\alpha\in (0,1]$, denote by $\C^{m,\alpha}(\Omega,\IR^{p})$ the space of functions in $\C^{m}(\Omega,\IR^{p})$ whose derivatives of order $m$ are $\alpha$--H{\"o}lder continuous. Furthermore, the space of functions with continuous derivatives up to arbitrary order is denoted by $\C^{\infty}(\Omega, \IR^{p})$ and the subspace of compactly supported functions in $\C^{m}(\Omega,\IR^{p})$ is denoted by $\C_0^{m}(\Omega,\IR^{p})$. Whenever $p =1$, we employ the notation $\mathcal{C}^{m,\alpha}(\Omega)$, $\mathcal{C}^m(\Omega)$, $\mathcal{C}_0^m(\Omega)$. 
The space $\C^\infty_0(\IR^2)$, whose dual is the space of distributions, will be used frequently throughout this work.
The associated duality pairing is denoted as $\langle \cdot , \cdot \rangle_{\IR^2}$. We will also say that a domain
$\Omega \subset \IR^2$ is of class $\C^{m,\alpha}$ if its boundary can be locally parametrized by functions in
$\C^{m,\alpha}(I,\IR^2)$ for an open interval $I\subset\IR$.
The standard space of $\IR$ valued, square integrable functions with respect to the (corresponding) Lebesgue measure on $\Omega$ is
denoted as $L^2(\Omega)$. Moreover, for $s \geq 0 $ we define the Sobolev space $H^s(\Omega)$ as in
\cite[Sec.~3.5]{mclean2000strongly}, i.e., as the space of functions whose weak derivatives up to order
$s$ are in $L^2(\Omega)$ when $s\in\IN$, and through the so-called Sobolev Slobodecki\u{i} semi-norm when $s \notin \IN$ (\emph{c.f.}~\cite[Eq.~3.18]{mclean2000strongly}). We also introduce the local Sobolev spaces as
$$H^s_{\text{\text{loc}}}(\Omega) := \{ F \in C^{\infty}_0(\Omega)^\star: \chi F \in H^s(\Omega), \ \forall \chi \in C^\infty_0(\IR^2) \},$$
and the following subspaces of functions in $H^1(\Omega)$ with integrable Laplacian:
$$
H^1(\Omega; \Delta) \coloneq \{ F \in H^1(\Omega) : \Delta F \in L^2(\Omega)\}, 
$$
$$
H^1_{\text{\text{loc}}}(\Omega; \Delta) \coloneq \{ F \in H_{\text{\text{loc}}}^1(\Omega) : \Delta F \in L^2(\Omega)\}.
$$
Sobolev spaces on the boundary $\partial \Omega$ are defined via localization, as in \cite[Sec.~3.11]{mclean2000strongly}.

Let $I:=(-1,1)$. An \emph{open arc} is defined through a continuous, non-intersecting parametrization $\br:I\to\IR^2$ such that
$\br \in\C^{m,\alpha}(I,\IR^2)$ for $m\in\IN_0$ and $\alpha\in (0,1]$ satisfying
$m+\alpha\geq 1$ and such that $\Vert\br'(t)\Vert \neq 0$ for all $t\ \in I$ (ensuring that $\br$ is injective). We denote by $\C_b^{m,\alpha}$ the set of parametrizations satisfying these
properties and we say that $\Lambda \subset \IR^2$ is an open arc of class $\C^{m,\alpha}$ if there exists
$\br \in \C_b^{m,\alpha}$ such that $\Lambda = \br(I)$. The same convention is used for closed boundaries.

Given a smooth open arc $\Lambda$, we assume that there exists a smooth closed boundary $\widetilde{\Lambda}$ such that
$\Lambda \subset \widetilde{\Lambda}$. For $s \in \IR$, the Sobolev space $H^s(\Lambda)$ is defined as the restriction of elements of $H^s(\widetilde{\Gamma})$. Similarly $\widetilde{H}^s(\Lambda)$ 
is composed of elements whose extension by $0$ is in $H^s(\widetilde{\Gamma})$.
We also recall the duality relation $H^s(\Lambda)^\star \cong \widetilde{H}^{-s}(\Lambda)$, where the duality pairing is an
extension of the $L^2(\Lambda) \equiv H^0(\Lambda)$ inner product, and is denoted as $\langle \cdot , \cdot\rangle_{\Lambda}$. For
further details on the construction and definition of Sobolev spaces, we refer to \cite[Chap.~3]{mclean2000strongly}.

\subsection{Sobolev Spaces}
\label{sec:SobSpaces}

In this section, we introduce the Sobolev spaces on multi-arc geometries. We begin by providing a precise definition of a multi-arc.
\begin{definition}[Multi-arc]
  \label{def:multiarc}
  We say that a bounded set $\Gamma\subset\IR^2$ is a multi-arc of class $\C^{m,\alpha}$ for some $m\in\IN_0$ and
  $\alpha \in (0,1]$ satisfying $m+\alpha \geq 1$, if the following conditions are met:
  \begin{enumerate}
  \item There is some $n\in\IN$ such that $\overline\Gamma=\bigcup_{i=1}^n\overline\Gamma_i$, where $\{\Gamma_i\}_{i=1}^n$ is a pairwise
    disjoint collection of open arcs of class $\C^{m,\alpha}$ such that their closures (i.e., $\{\overline\Gamma_i\}_{i=1}^n$) may intersect only at their endpoints.
  \item There exists a pairwise disjoint collection of bounded open sets of class $\C^{0,1}$, denoted $\{\Omega_i\}_{i=1}^n$, such that
    $\partial \Omega_i \cap \partial \Omega_j \subset \overline{\Gamma}$ for all $i,\ j\in\{1,\hdots,n\}$ with $i\neq j$, each open arc in the decomposition $\{\Gamma_i\}_{i=1}^n$
    is contained in exactly two of the boundaries $\{\partial\Omega_i\}_{i=1}^n$, and
    $$\Omega := \mathrm{int}\left(\bigcup\limits_{i=1}^n\overline\Omega_i\right)$$ is a $\C^{0,1}$ domain.
    \item 
There exists some $\epsilon_0 > 0$ such that for every $\epsilon\in(0,\epsilon_0)$ we have that 
$$
\Gamma_\epsilon \coloneq \{ \bx  \in \IR^2 : \ \text{dist}(\bx, \Gamma) < \epsilon \} 
$$
is an open $\mathcal{C}^{0,1}$ domain.    
 \end{enumerate}
 
In addition to the decomposition in disjoint open arcs $\{\Gamma_i\}_{i=1}^n$, we also introduce the decomposition
$$\overline\Gamma = \bigcup_{i=1}^n\overline{\widehat{\Gamma}_i},$$ where $\widehat{\Gamma}_i = \Gamma \cap \partial \Omega_i$ for $i \in \{1,\hdots,n\}$. 

Points belonging to the intersection of two or more arcs are called \emph{junction points}.
  If exactly two arcs meet at such a point, we refer to it as a \emph{corner}.
  If three or more arcs meet, the point is referred to as a \emph{branch point} or \emph{multiple junction}.
\end{definition}
A graphical illustration of the configuration in \Cref{def:multiarc} is shown in \Cref{fig:NeumanGeo}.
From \Cref{def:multiarc} it is easy to deduce that for any $i, j,\ell\in\{1,\hdots,n\}$ such that
$j\neq\ell$ and $\partial \Omega_j$ and $\partial \Omega_{\ell}$ contain the open arc $\Gamma_i$, then the
exterior unit normal vectors in $\Omega_j$ and $\Omega_\ell$ satisfy
$$(\bm{n}_{\partial \Omega_j})|_{\Gamma_i} = - (\bm{n}_{\partial \Omega_{\ell}})|_{\Gamma_i}.$$
From here onward, let $\Gamma\subset\IR^2$ denote a given multi-arc
composed of $n\in\IN$ disjoint open arcs $\{\Gamma_i\}_{i=1}^n$. 
The space $L^2(\Gamma)$ is defined as the set
of measurable functions on $\Gamma$ such that
$$\int\limits_\Gamma \abs{f(\bx)}^2\ \d\bx = \sum_{i=1}^n \int\limits_{\Gamma_i} \abs{f(\bx)}^2\ \d\bx < \infty,$$
which can be identified with its dual through the duality pairing
\begin{align}
\label{eq:L2duality}
  \dup{f}{g}_\Gamma \coloneq \int\limits_\Gamma f(\bx) g(\bx)\ \d\bx = \half \sum_{i=1}^n \dup{f}{g}_{\widehat{\Gamma}_i}.
\end{align}

The space of smooth functions on $\Gamma$ is defined by restricting functions in $\C_0^\infty(\IR^2)$ to $\Gamma$, and will be
denoted as $\C^\infty(\Gamma)$. We also define the space of smooth functions with compact support on $\Gamma$ as 
$$ \mathcal{D}(\Gamma) :=
\left\lbrace u \in \C^\infty(\Gamma) : \supp(u|_{\widehat{\Gamma}_i}) {\Subset} \widehat{\Gamma}_i
  \text{ for } i\in\{1,\hdots , n\} \right\rbrace. 
$$

Although $\mathcal{D}(\Gamma)$ and $\C^\infty(\Gamma)$ are composed of the most regular functions that can be defined on $\Gamma$, these are not $\C^\infty$ in the usual manifold sense. For $s\in[-1,1]$, we introduce the product spaces
$$
H^s_\times(\Gamma) := \prod_{i=1}^n H^s(\widehat{\Gamma}_i)\quad\text{and}\quad 
\widetilde{H}^s_\times(\Gamma) := \prod_{i=1}^n \widetilde{H}^s(\widehat{\Gamma}_i).
$$
As a consequence of the duality relation for open arcs, we have that
$H^s_\times(\Gamma)^\star \cong \widetilde{H}^{-s}_\times(\Gamma)$. We denote the respective duality pairing as
$\dup{\cdot}{\cdot}_{H^s_\times(\Gamma), \widetilde{H}^{-s}_\times(\Gamma)}$, which is an extension of the
inner product on the space
$$\prod_{i=1}^n L^2(\widehat{\Gamma}_i).$$ Furthermore, we notice
that $\C^\infty(\Gamma)$ can be identified as a subset of $H^s_\times(\Gamma)$ through the injection
$$u \mapsto (u|_{\widehat{\Gamma}_1}, \hdots , u|_{\widehat{\Gamma}_n}).$$ 
An analogous injection enables us to identify $\mathcal{D}(\Gamma)$ with a subset of $\widetilde{H}^s_{\times}(\Gamma)$.
We then define, for $s\in[-1,1]$, the following closed subspaces: 
\begin{align}
\label{eq:hsdefi}
H^s(\Gamma) := \overline{\C^\infty(\Gamma)}^{H^s_\times(\Gamma)}\quad\text{and}\quad
\widetilde{H}^s(\Gamma) := \overline{\mathcal{D}(\Gamma)}^{\widetilde{H}^s_\times(\Gamma)}. 
\end{align}
It follows directly from the construction of the spaces above, that
\begin{align}
\label{eq:norms}
  &\norm{u}_{H^s(\Gamma)}^2 = \sum_{i=1}^n \norm{u|_{\widehat{\Gamma}_i}}_{H^s(\widehat{\Gamma}_i)}^2 = \sum_{i=1}^n \inf_{\substack{\hat{u}_i \in H^s(\partial \Omega_i)\\ \hat{u}_i|_{\widehat{\Gamma}_i} = u|_{\widehat{\Gamma}_i}} }\norm{\hat{u}_i}^2_{H^s( \partial \Omega_i)},\\
  &\norm{u}_{\widetilde{H}^s(\Gamma)}^2 = \sum_{i=1}^n \norm{u\vert_{\widehat{\Gamma}_i}}^2_{\widetilde{H}^s(\widehat{\Gamma}_i)} = \sum_{i=1}^n \norm{\tilde{u}_i}^2_{H^s(\partial \Omega_i)},
\end{align}
where $\tilde{u}_i$ denotes the extension by zero of $u|_{\widehat{\Gamma}_i}$.
We can easily see from the previous definition, that both spaces are equivalent to $L^2(\Gamma)$ when $s=0$.

In contrast to the product spaces, the duality relation
$H^s_\times(\Gamma)^\star\cong\widetilde H^{-s}_\times(\Gamma)$ does not extend to $H^s(\Gamma)$ and
$\widetilde H^s(\Gamma)$. To see this, take the case $n = 3$ and consider $F = (1,2,3) \in H^s_\times(\Gamma)$
for every $s \in [-1,1]$. Then, $F$ defines both a bounded linear functional on $\widetilde{H}^{-s}_\times(\Gamma)$ and
a bounded linear functional in $\widetilde{H}^{-s}(\Gamma)$. In particular, for any $\varphi \in \mathcal{D}(\Gamma)$,
we have that 
$$ \dup{F}{\varphi}_{H^s_\times(\Gamma), \widetilde{H}^{-s}_\times(\Gamma)}
= \int\limits_{\widehat{\Gamma}_1} \varphi(\bx)\ \d\bx
+ \int\limits_{\widehat{\Gamma}_2} 2 \varphi(\bx)\ \d\bx
+ \int\limits_{\widehat{\Gamma}_3} 3 \varphi(\bx)\ \d\bx
= \int\limits_{\Gamma} f(\bx) \varphi(\bx)\ \d\bx,$$
where 
$$f(\bx) = \begin{cases}
  3 \quad \bx \in \partial \Omega_1 \cap \partial \Omega_2 \\
  5 \quad \bx \in \partial \Omega_2 \cap \partial \Omega_3 \\
  4 \quad \bx \in \partial \Omega_1 \cap \partial \Omega_3
\end{cases}$$
If the relation $H^{s}(\Gamma)^\star \cong \widetilde{H}^{-s}(\Gamma)$ were true, then we should have that
$f \in H^s(\Gamma)$ for every $s \in [-1,1]$. However, this does not hold in general due to the discontinuity of $f$
across $\widehat{\Gamma}_1$, $\widehat{\Gamma}_2$, and $\widehat{\Gamma}_3$.

We can characterize the dual spaces of $H^s(\Gamma)$ and $\widetilde{H}^s(\Gamma)$ using \cite[Lemma~2.2]{chandler2017sobolev} or \cite[Thm.~4.9]{rudin1991functional}.
Using these results, we obtain the following relations:
\begin{align*}
\widetilde{H}^{-s}(\Gamma) \subset H^s(\Gamma)^\star \subset \widetilde{H}^{-s}_\times(\Gamma), \\
{H}^{-s}(\Gamma) \subset \widetilde{H}^s(\Gamma)^\star \subset {H}^{-s}_\times(\Gamma), 
\end{align*}
where we have abused the notation to denote $H^s(\Gamma)^\star$ (respectively $\widetilde{H}^s(\Gamma)^\star$)
as the identification of the dual space of $H^s(\Gamma)$ (respectively $\widetilde{H}^s(\Gamma)$), obtained from \cite[Lmm.~2.2]{chandler2017sobolev} or \cite[Thm.~4.9]{rudin1991functional} using the identification
of the dual of $H^s_\times(\Gamma)$ with $\widetilde{H}^{-s}_\times(\Gamma)$ for every $s \in [-1,1]$.
Furthermore, notice that for $f,g \in L^2(\Gamma)$, \cref{eq:L2duality} yields
$$
\dup{f}{g}_{H^s_\times(\Gamma), \widetilde{H}^{-s}_\times(\Gamma)} = \half \dup{f}{g}_\Gamma.
$$
Hence, the bilinear form $\dup{\cdot}{\cdot}_\Gamma$ can be extended to a continuous bilinear form
with its second argument in $H^s(\Gamma)$ or $\widetilde{H}^s(\Gamma)$ and the first one in the
respective dual space. Moreover, since the realization of the duals is unitary, we have that for
any $f \in H^s(\Gamma)^\star$ and $g \in \widetilde{H}^s(\Gamma)^\star$,
\begin{align}  
\label{eq:dualunitary}
  \norm{f}_{\widetilde{H}^{-s}_\times(\Gamma)}
  = \half \sup_{\varphi \in H^s(\Gamma)} \frac{ \dup{f}{\varphi}_\Gamma}{\norm{\varphi}_{H^s(\Gamma)}}, \quad 
  \norm{g}_{H^{-s}_\times(\Gamma)}
  = \half \sup_{\phi \in \widetilde{H}^s(\Gamma)} \frac{ \dup{g}{\phi}_\Gamma}{\norm{\phi}_{\widetilde{H}^s(\Gamma)}},
  \quad \forall s \in [-1,1].
\end{align}
\begin{remark}
  The spaces introduced here are closely related to those in \cite{Claeys2013}. In particular, the spaces
  $H^s_\times(\Gamma)$, $\widetilde{H}^s_\times(\Gamma)$ are multi-trace spaces, in the sense that a function has two
  values in each point of $\Gamma$. Furthermore, arguing as in the case of standard screens presented in \cite[Sec.~5.2]{Claeys2013} we obtain
\begin{align*}
  \mathbb{H}^{+\half}(\Gamma)\hookrightarrow H^{\half}_\times(\Gamma),
  \quad \text{and} \quad
  \mathbb{H}^{-\half}(\Gamma) \hookrightarrow H^{-\half}_\times(\Gamma), 
\end{align*}
where $\mathbb{H}^{+\half}(\Gamma)$ and $\mathbb{H}^{-\half}(\Gamma)$ are the multi-trace spaces as defined in \cite{Claeys2013}.
Moreover, from the density of $\C^\infty_0(\IR^2)$ in $H^1(\IR^2)$ and \Cref{lem:equiv} we can rigorously show that
$$
H^{+\half}([\Gamma]) \cong H^{\half}(\Gamma), 
$$
where $H^{+\half}([\Gamma])$ is the single trace space introduced in \cite[Sec.~6.1]{Claeys2013}. Finally, by definition,
the jump spaces in \cite[Sec.~6.4]{Claeys2013} are equivalent to $H^{\half}(\Gamma)^\star$. 
\end{remark}

\subsection{Trace Operators on Multi-arcs}
\label{sec:Traces}

\subsubsection{Dirichlet Trace}
Let $u \in \C_0^\infty(\IR^2)$ and let $s \in (\half,\threehalf)$. By  the standard properties of the trace space, see
\cite[Thm.~3.38]{mclean2000strongly}, and the definition of the norms for spaces on open arcs, we have that for every $i\in\{1,\hdots, n\}$ it holds that
\begin{align}
\label{eq:dirTraceArc}
\Vert u\Vert_{\widehat{\Gamma}_i}\Vert_{H^{s-\half}(\widehat{\Gamma}_i) } \lesssim 
\Vert u\Vert_{H^s(\IR^2)}.
\end{align}
\Cref{eq:dirTraceArc}, together with the definition of $H^{s-\half}(\Gamma)$ in \cref{eq:hsdefi}, yields
$$
\Vert u|_{\Gamma }\Vert_{H^{s-\half}(\Gamma) } \lesssim 
 \Vert u\Vert_{H^s(\IR^2)} .
$$ 
This estimate allows us to define the \emph{Dirichlet trace operator},
\[
\gamma_D : H^s(\IR^2) \longrightarrow H^{s-\half}(\Gamma),
\]
as the continuous extension of the restriction operator. It can also be shown that the  Dirichlet 
trace operator can be defined, as a bounded operator, on local spaces. In what follows, we denote the Dirichlet trace operator on $\Gamma$ as $$\gamma_D:H^s_{\mathrm{\text{loc}}}(\IR^2)\to H^{s-\half}(\Gamma),$$ while the
classical Dirichlet trace operator on $\partial\Omega_i$ is denoted as $$\gamma_D^i:H^{s}(\Omega_i)\to H^{s-\half}(\partial\Omega_i),$$ for each $i\in\{1, \hdots, n\}$.

\subsubsection{Neumann Jump}

We now seek to extend the definition of the Neumann jump across the multi-arc $\Gamma$, which we carry out through an application Green's integration by parts formula. Before proceeding, we introduce an assumption that facilitates its application to our context.

\begin{assumption}
\label{assumption:RGamas}
 Let $\IR_\Gamma := \{ r \in \IR : \overline{\bigcup_{i=1}^n \Omega_i} \subset B_{\bm{0}}(r) \}$.
 For every $r \in \IR_\Gamma$ there exists a family of open Lipschitz domains $\{\Omega_i^r\}_{i=1}^n$
 (where $n\in\IN$ is as in \Cref{def:multiarc}) such that for any $r, p\in\IR_\Gamma$ and $i,j\in\{1,\hdots,n\}$ such that
 $r < p$ and $i\neq j$, it holds that
 \begin{align*}
  \bigcup_{i=1}^n \overline{\Omega_i^r} = \overline{B_{\bm{0}}(r)}, \quad \Omega_i^r \cap \Omega_j^{r} = \emptyset,\quad \Omega_i \subset \Omega_i^{r}\quad\text{and}\quad\Omega_i^r \subset \Omega_i^{p}.
 \end{align*}
\end{assumption}
Throughout the rest of the article, we will assume that the previous assumption holds.

Let $U$ be a piecewise smooth function in $\Omega_i^r$ for each $i\in\{1,\hdots,n\}$. Then, we define its outward normal derivative on $\partial \Omega_i^r$, for each $i\in\{1,\hdots,n\}$, by
\[
\partial_{\bm n}^i U := \bm n_i \cdot \gamma_D^i(\nabla U),
\]
where $\bm n_i$ denotes the unit normal vector to $\Omega_i$, oriented so as to point to the exterior of $\Omega^r_i$.
 If $U \in H^1 (\Omega_i ^r; \Delta)$, we denote by $\gamma_N^iU \in H^{-\half}(\partial \Omega_i^r)$ its Neumann trace, which is an extension of the normal derivative, see \cite[Chapter 4]{mclean2000strongly}. 
Notice that we commit a slight abuse of notation, as there is no dependence on $r\in\IR_\Gamma$ in the notation of the Neumann trace, which will always be clear from the context. We will also reuse the notation $\gamma_D^i$ to denote the Dirichlet trace in the domain $\Omega_i^r$, regardless of the value of $r\in\IR_\Gamma$.

We now begin our work to extend the notion of the Neumann jump across multi-arc geometries. Let $U,V$ be compactly supported functions in $\IR^2$ such that they are smooth inside $\Omega_i^r$ for each $i\in\{1,\hdots,n\}$ and $r \in \IR_\Gamma$. Using Green's formula, we have that
\begin{align*}
  \sum_{i=1}^n \int\limits_{\Omega_i^r} \Delta U(\bx) V(\bx) +\nabla U(\bx) \cdot \nabla V(\bx)\ \d\bm{x}&= \sum_{i=1}^n \langle \partial_{\bm{n}}^i U, \gamma_D^i V \rangle_{ \partial \Omega^r_i}\\
  &= \sum_{i=1}^n \langle \partial_{\bm{n}}^i U, \gamma_D^i V \rangle_{\widehat{\Gamma}_i} + \langle \partial_{\bm{n}}^i U, \gamma_D^i V \rangle_{\partial\Omega_i^r\setminus \widehat{\Gamma}_i}\\
  &= \sum_{i=1}^n -\langle [\partial_{\bm{n}}^i U], \gamma_D^i V \rangle_{\widehat{\Gamma}_i} + \langle \partial_{\bm{n}}^i U, \gamma_D^i V \rangle_{ \partial \Omega_i^r\setminus \widehat{\Gamma}_i},
\end{align*}
where $[\partial_{\bm{n}}^i U]$ corresponds to the jump of the normal derivative of $U$ across $\widehat{\Gamma}_i$ for each $i \in \{1,\hdots,n\}$.
Then, if $V$ is globally continuous, $U$ is sufficiently smooth across $\IR^2\setminus\overline{\Gamma}$, and $r\in\IR_\Gamma$ is large enough, we have that $\langle\partial_{\bm{n}}^i U, \gamma_D^i V \rangle_{ \partial \Omega_i^r\setminus \widehat{\Gamma}_i}=0$ for each $i\in\{1,\hdots, n\}$ and, therefore,

\begin{align}
\label{eq:neujumpsmooth}
\sum_{i=1}^n \int\limits_{\Omega_i^r} \Delta U(\bx) V(\bx) +\nabla U(\bx) \cdot \nabla V(\bx)\ \d\bm{x} = -\sum_{i=1}^n \langle [\partial_{\bm{n}}^i U], \gamma_D V \rangle_{\widehat{\Gamma}_i}=: \langle [\partial_{\bm{n}}U],\gamma_D V\rangle_\Gamma.
\end{align}
The latter equation enables us to define the Neumann jump of $U$ acting on $\gamma_D V$. In what follows, we present some technical results that allows us to properly extend the definition in \cref{eq:neujumpsmooth} to the adequate Sobolev space setting.
First, we recall the relevant properties of Burekov's extension operator in our context \cite{burenkov1998sobolev,extensions17}. This operator will be used later on to establish a relevant density result. 

\begin{lemma} \label{lemma:extension}
    There exists a bounded, continuous, and linear operator $\mathcal{E}: H^1(\Omega) \rightarrow H^1(\IR^2)$ such that
    \begin{enumerate}
        \item $\mathcal{E} U |_{\Omega}=  U$ for every $U \in H^1(\Omega)$.
        \item $\mathcal{E}$ maps $\mathcal{C}^\infty(\Omega)$ to $\mathcal{C}^2(\IR^2)$. 
        \item If $U$ is zero on a relative open neighborhood of $\Gamma$ in $\Omega$, then $\mathcal{E}U$ is zero on a open neighborhood of $\Gamma$ in $\IR^2$.  
    \end{enumerate}
\end{lemma}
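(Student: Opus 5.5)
The plan is to take $\mathcal{E}$ to be Burenkov's extension operator for the bounded Lipschitz domain $\Omega$ of \Cref{def:multiarc} \cite{burenkov1998sobolev,extensions17}, and to check the three listed properties from the explicit form of its construction rather than building a new operator. Recall how it is built. Cover $\partial\Omega$ by finitely many open sets $V_1,\hdots,V_N$; in each one, after a rigid motion, $\Omega$ is the supergraph of a Lipschitz function. Pick a partition of unity $\{\psi_j\}$ subordinate to this cover together with an interior set. On each $V_j$, for a point $\bx$ below the graph, set
\[
(\mathcal{E}_jU)(\bx) = \int_{\IR^2} U\bigl(\bx - \delta_j(\bx)\,\by\bigr)\,\omega(\by)\,\d\by .
\]
Here $\delta_j$ is a regularized distance to the graph: it is smooth off the graph and comparable to the distance to it. The kernel $\omega \in \C^\infty_0$ is supported in a cone $K$ pointing into $\Omega$ and satisfies the moment conditions needed for the extension to match $U$ to high order. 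Finally $\mathcal{E}U := \psi_0 U + \sum_j \psi_j\, \mathcal{E}_j(U)$, with $\mathcal{E}_jU = U$ on $\Omega$. Property~1 and the $H^1(\Omega)\to H^1(\IR^2)$ bound are the content of Burenkov's theorem (Hardy-type inequalities combined with $|\nabla\delta_j|\lesssim 1$ and $|\nabla^2\delta_j|\lesssim \delta_j^{-1}$), so I will only cite them.

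For property~2, I will use Burenkov's second statement: for a Lipschitz domain the operator also maps $\C^\ell(\overline\Omega)$ into $\C^\ell(\IR^2)$ for each finite $\ell$, provided $\omega$ has enough vanishing moments. I fix the construction with moments up to order $2$. Away from $\partial\Omega$, smoothness of $\mathcal{E}_jU$ is clear: differentiating under the integral only produces derivatives of $U$ and of $\delta_j$, and $\delta_j$ is smooth off the graph. The real work is continuity of derivatives up to order $2$ across $\partial\Omega$. Differentiating the formula produces terms $\partial^\beta U(\bx-\delta_j\by)\,\by^\beta\,\partial^\alpha\delta_j$ with $|\partial^\alpha\delta_j|\lesssim\delta_j^{1-|\alpha|}$. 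The singular factors are absorbed by Taylor-expanding $U$ at the boundary point and using the moment conditions on $\omega$. This is exactly the step in \cite{burenkov1998sobolev} that I would quote, specialized to $\ell=2$.

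I expect property~3 to be the main point needing an argument specific to this setting; it comes from the locality of the construction. If $\bx\notin\overline\Omega$ lies in $V_j$, then $(\mathcal{E}_jU)(\bx)$ depends only on $U$ in the set
\[
\bx-\delta_j(\bx)K\subset \Omega\cap B_{\bx}(c\,\delta_j(\bx)) \subset \Omega\cap B_{\bx}\bigl(c'\operatorname{dist}(\bx,\partial\Omega)\bigr),
\]
with $c'$ independent of $\bx$. Now suppose $U=0$ on $\Omega\cap\mathcal{N}$, where $\mathcal{N}$ is an open set containing $\overline\Gamma$; since $\overline\Gamma$ is compact, we may take $\mathcal{N}\supset\Gamma_{\epsilon}$ for some $\epsilon>0$. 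Every point $\bx\in\Gamma_{\epsilon/(2(1+c'))}$ is then handled case by case. If $\bx\in\Omega$, then $\mathcal{E}U(\bx)=U(\bx)=0$. If $\bx\notin\Omega$, then $\operatorname{dist}(\bx,\partial\Omega)\le\operatorname{dist}(\bx,\Gamma)$, because $\Gamma\subset\overline\Omega$ (here I will check that $\Gamma$ can only touch $\partial\Omega$, at endpoints, and otherwise lies inside). Hence the ball $B_{\bx}(c'\operatorname{dist}(\bx,\partial\Omega))$ stays inside $\Gamma_\epsilon$, so every term $\psi_j\mathcal{E}_jU$ vanishes at $\bx$. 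The interior term $\psi_0U$ vanishes trivially. This gives an open neighborhood of $\Gamma$ in $\IR^2$ on which $\mathcal{E}U=0$.
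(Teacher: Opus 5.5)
Your proposal is correct and follows the same route as the paper: take Burenkov's extension operator, cite \cite{burenkov1998sobolev,extensions17} for the extension property and the $\C^2$ property, and obtain the third property from the locality of the construction. The only difference is that you make the locality quantitative. You show that $\mathcal{E}U(\bx)$ depends only on $U$ in $\Omega\cap B_{\bx}(c'\operatorname{dist}(\bx,\partial\Omega))$, and combine this with $\operatorname{dist}(\bx,\partial\Omega)\le\operatorname{dist}(\bx,\Gamma)$ for $\bx\notin\Omega$ (since $\Gamma\subset\overline\Omega$), whereas the paper only asserts locality qualitatively via ``appropriately reflected points''.
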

\begin{proof}
    The first and second points are standard properties of Burenkov's extension operator, and can be found in \cite[Chap.~6]{burenkov1998sobolev} and \cite{extensions17}. The third point is an immediate consequence of the construction of the operator, as the value at a point $\bx \in \IR^2 \setminus \overline{\Omega}$ depends on the values on some appropriately reflected points inside $\Omega$. Once again, we refer to \cite[Chap.~6]{burenkov1998sobolev} and \cite{extensions17} for further details on its construction. (This local behavior of the operator was also explicitly mentioned in \cite[Sec.~1]{extensions17}).
\end{proof}

\begin{lemma}
\label{lemma:dens}
    Fix $r\in\IR_\Gamma$.
    For any $U \in \mathcal{C}_0^\infty(B_{\bm{0}}(r))$ such that $\gamma_D U = 0$ in $\Gamma$, there exists a sequence $\{U_k\}_{k \in \IN} \subset \mathcal{C}^\infty(\Omega)$ such that each $U_k$ is zero on an open neighborhood of $\Gamma$ relative to $\Omega$, and such that
        $$
    \lim_{k \rightarrow \infty}\Vert U_k - U \Vert_{H^1(\Omega)} =  0. 
    $$
    Furthermore, there exist extensions $\widehat{U}_k\in\mathcal{C}^2_0(\IR^2 \setminus \overline{\Gamma})$ to each $U_k\in\C^\infty(\Omega)$ such that
    $\supp{\widehat{U}_k}\subset B_{\bm{0}}(r)$, as well as an extension
    $\widehat{U}\in\C^2_0(\IR^2 \setminus \overline{\Gamma})$ of $U$, also supported in $B_{\bm{0}}(r)$, such that 
    $$
    \lim_{k \rightarrow \infty}\Vert \widehat{U}_k - \widehat{U} \Vert_{H^1(\IR)} =  0.
    $$
\end{lemma}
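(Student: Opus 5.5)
The plan is to treat each subdomain $\Omega_i$ separately, apply a standard cutoff argument near $\Gamma$, and then glue the pieces with Burenkov's operator from \Cref{lemma:extension}. Since $U\in\C_0^\infty(B_{\bm 0}(r))$ is smooth on all of $\IR^2$, the hypothesis $\gamma_D U=0$ on $\Gamma$ means $U$ vanishes pointwise on $\overline\Gamma$. By item~3 of \Cref{def:multiarc}, the distance function $d(\bx)=\mathrm{dist}(\bx,\Gamma)$ is Lipschitz with $|\nabla d|\le 1$. Because $U$ is smooth and vanishes on $\overline\Gamma$, we get $|U(\bx)|\lesssim d(\bx)$ near $\Gamma$.

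\textbf{Step 1: cutoff.} Fix $\eta\in\C^\infty(\IR)$ with $\eta=0$ on $(-\infty,1]$ and $\eta=1$ on $[2,\infty)$. Set
\begin{equation*}
U_k := \eta(k\,d)\,U .
\end{equation*}
This $U_k$ vanishes on $\Gamma_{1/k}$, which is an open neighbourhood of $\Gamma$. It is only Lipschitz, not $\C^\infty$, because $d$ is only Lipschitz. To repair this, replace $d$ by a regularized distance $\rho$ (in Stein's sense), which is smooth on $\IR^2\setminus\overline\Gamma$ and satisfies $\rho\cong d$ and $|\nabla\rho|\lesssim1$. Then $U_k|_\Omega\in\C^\infty(\Omega)$.

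\textbf{Step 2: convergence.} We have $U_k-U=(\eta(k\rho)-1)U$, which is supported in the strip $S_k=\{\rho\lesssim 2/k\}$, whose measure is $\cO(1/k)$ since $\Gamma$ is a finite union of rectifiable arcs. The gradient term splits as
\begin{equation*}
k\,\eta'(k\rho)\nabla\rho\,U+(\eta(k\rho)-1)\nabla U .
\end{equation*}
The second term tends to zero by dominated convergence. For the first term, $|U|\lesssim\rho\lesssim 1/k$ on the support of $\eta'(k\rho)$, so it is bounded pointwise by a constant on $S_k$. Its $L^2$ norm is therefore $\cO(k^{-1/2})\to0$. The $L^2$ part of the difference converges to zero trivially. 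This gives $\|U_k-U\|_{H^1(\Omega)}\to0$.

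\textbf{Step 3: extensions.} Apply \Cref{lemma:extension} and multiply by a fixed cutoff $\chi\in\C_0^\infty(B_{\bm0}(r))$ with $\chi=1$ on $\overline\Omega\cup\supp U$. Set
\begin{equation*}
\widehat U_k=\chi\,\mathcal E U_k \quad\text{and}\quad \widehat U=\chi\,\mathcal E(U|_\Omega).
\end{equation*}
By item~2 of \Cref{lemma:extension} these are $\C^2$, and they are supported in $B_{\bm0}(r)$. By item~3, each $\widehat U_k$ vanishes in a neighbourhood of $\Gamma$, so $\widehat U_k\in\C^2_0(\IR^2\setminus\overline\Gamma)$. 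By boundedness of $\mathcal E$ and of multiplication by $\chi$,
\begin{equation*}
\|\widehat U_k-\widehat U\|_{H^1(\IR^2)}\lesssim\|U_k-U\|_{H^1(\Omega)}\to0 .
\end{equation*}

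\textbf{Main obstacle.} The statement asks for $\widehat U\in\C^2_0(\IR^2\setminus\overline\Gamma)$, but $U$ itself does not vanish near $\Gamma$. I read $\C^2_0(\IR^2\setminus\overline\Gamma)$ as piecewise $\C^2$ on the complement of $\overline\Gamma$ with compact support, and with that reading the construction above works. The point I expect to be most delicate is item~2 of \Cref{lemma:extension}: it requires $\C^\infty(\Omega)$, meaning smoothness up to the boundary. Since $U_k$ vanishes near $\Gamma$ and equals $U$ elsewhere on $\partial\Omega$, this smoothness up to $\partial\Omega$ must be checked. If it fails, I would instead mollify the piecewise-defined functions after extension, using that $\widehat U_k$ is zero on a fixed neighbourhood $\Gamma_{1/k}$ of $\Gamma$.
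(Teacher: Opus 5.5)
Your argument is correct, but it reaches the approximating sequence by a different route than the paper. The paper does not use a cutoff. It invokes the density of $\C_0^\infty(\IR^2\setminus\overline{\Lambda})$ in $\{F\in H^1(\IR^2):\gamma_{D,\Lambda}F=0\}$ for a single open arc $\Lambda$. This is proved in \Cref{app:profDensOpenArc} by a duality argument: a functional vanishing on $\C_0^\infty(\IR^2\setminus\overline\Lambda)$ is an $H^{-1}$ distribution supported on $\overline\Lambda$, hence of the form $v\otimes\delta$ with $v\in\widetilde H^{-\half}$, and so it annihilates every function with zero trace. The paper applies this separately on each $\widehat\Gamma_i$ to get $U_k^{(i)}\to U$ in $H^1(\IR^2)$, then glues $U_k:=U_k^{(i)}$ on $\Omega_i$. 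Because the glued function is only defined on $\Omega$, the Burenkov operator is genuinely needed there to produce $\widehat U_k$.

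Your cutoff $\eta(k\rho)U$ instead exploits the smoothness of $U$ through $|U|\lesssim d$, via the mean value theorem at a nearest point of $\overline\Gamma$. This gives the $\cO(k^{-1/2})$ gradient bound without any trace or duality theory. It also treats all arcs at once and avoids the transfer to curved arcs via parametrizations, which the paper only sketches. The paper's route, in exchange, proves the stronger density statement for arbitrary $H^1$ functions with vanishing trace. There $|U|\lesssim d$ is unavailable, and a cutoff argument would need a Hardy-type inequality.

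Your construction also allows two simplifications. Since $\eta(k\rho)$ vanishes identically on $\{d<1/(c_2k)\}$ (where $\rho\le c_2 d$) and is smooth elsewhere, $U_k\in\C_0^\infty(B_{\bm 0}(r)\setminus\overline\Gamma)$. So your worry about item~2 of \Cref{lemma:extension} is moot. In fact Step~3 is unnecessary: run Step~2 on all of $\IR^2$ and take $\widehat U_k:=U_k$, $\widehat U:=U$.

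The obstacle you flag about $\widehat U\in\C^2_0(\IR^2\setminus\overline\Gamma)$ is real: it cannot hold literally unless $U$ vanishes near $\Gamma$. The paper's proof does not address it either; it simply sets $\widehat U=\chi\mathcal E(U)$. The lemma's application in \Cref{prop:boundNeumanJump} only uses $\widehat U\in H^1(\IR^2)$ supported in $B_{\bm 0}(r)$, so your reading is consistent with how the lemma is used.
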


\begin{proof}
For any open arc $\Lambda\subset\IR^2$, the space $\mathcal{C}^\infty_0(\IR^2 \setminus \overline{\Lambda})$ is dense in $\{ F \in H^1(\IR^2) : \gamma_{D,\Lambda} U = 0\}$, where $\gamma_{D,\Lambda}$ is the Dirichlet trace operator on $\Lambda$. We refer to Appendix \ref{app:profDensOpenArc} for a detailed proof of this statement.

Take $r\in \IR_\Gamma$ and $U\in \mathcal{C}_0^\infty(B_{\bm{0}}(r))$ with $\gamma_D U  = 0$. Then, for each $i\in\{1,\hdots,n\}$, we have that $\gamma_{D,\widehat{\Gamma}_i} U = 0$ and we can construct a sequence $\{U_k^{(i)} \}_{k \in \IN}  \subset \mathcal{C}^\infty_0 (\IR^2 \setminus \overline{\widehat{\Gamma}_i})$ converging to $U$ in $H^1(\IR^2)$. Then, for each $k\in\IN$
we define $U_k\in H^1(\Omega)$ as
$$
U_k(\bx) := \begin{cases}
U_k^{(1)}(\bx)\quad&\text{if}\quad \bx\in\Omega_1\\
    \quad\vdots\quad &\vdots\quad\quad\vdots\\
U_k^{(n)}(\bx)\quad&\text{if}\quad \bx\in\Omega_n
\end{cases}
$$
so that $U_k\in\mathcal{C}^\infty(\Omega)$ and $U_k\equiv 0$ in an open neighborhood of $\Gamma$ in $\Omega$. Moreover, since
for each $i\in\{1,\hdots,n\}$ we have that $$U_k^{(i)} \xrightarrow[k\to\infty]{} U$$ in $H^1(\IR^2)$, it follows that
$\{U_k\}_{k\in\IN}$ converges to $U$ in $H^1(\Omega)$.
  
Consider now a fixed $\chi \in \mathcal{C}_0^\infty(\IR^2)$ such that $\chi = 1$ in $\supp U\cup\Omega$ and $ \supp \chi \subset B_{\bm{0}}(r)$. Then, we define 
$$
\widehat{U}:=\chi \mathcal{E} ( U)\quad\text{and}\quad\widehat{U}_k  := \chi \mathcal{E} ( U_k)\quad \forall k \in \IN, 
$$
where $\mathcal{E}$ is the Burekov's extension operator. From \Cref{lemma:extension}, it is clear that $\widehat{U}_k$ is an extension of $U_k$, as is that for every $k \in \IN$ there exists a neighborhood of $\Gamma$ in $\IR^2$ such that $\widehat{U}_k$ is $0$ in that neighborhood. From this it follows that
\begin{equation*}
\begin{split}
    \Vert \widehat{U}_k  - \widehat{U}\Vert_{H^1(\IR^2)} &= \Vert \chi \mathcal{E}(U_k) - \chi \mathcal{E}(U)\Vert_{H^1(\IR^2)} \\
    &\lesssim    \Vert  \mathcal{E}(U_k) -  \mathcal{E}(U)\Vert_{H^1(\IR^2)} \\
    & \lesssim \Vert  U_k -  U\Vert_{H^1(\Omega)}. 
    \end{split}
\end{equation*}
which proves that $\{\widehat{U}_k\}_{k=1}^{\infty}$ converges to $\widehat{U}$ on $H^1(\IR^2)$.
\end{proof}



The following lemma, concerning an equivalent norm in $H^{\half}(\Gamma)$, will be instrumental in
showing that the notion of the \emph{Neumann jump} is well defined for a certain class of functions.
\begin{lemma}
\label{lem:equiv}
Define, for all $u\in H^{\half}(\Gamma)$, the following quantity:
\begin{align*}
\vvvert u\vvvert_{H^{\half}(\Gamma)}:=\inf
\left\{\norm{U}_{H^1(\IR^2)} : U\in H^1(\IR^2)\ \text{such that}\ \gamma_D^i(U\vert_{\Omega_i})=u\ \text{in}\ \widehat{\Gamma}_i\ \forall i \in\{1,\hdots,n\}\right\}.
\end{align*}
Then, $\vvvert \cdot\vvvert_{H^{\half}(\Gamma)}$ is an equivalent
norm in $H^{\half}(\Gamma)$.
\end{lemma}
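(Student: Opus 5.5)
We have to prove two inequalities: $\norm{u}_{H^{\half}(\Gamma)} \lesssim \vvvert u\vvvert_{H^{\half}(\Gamma)}$ and the reverse one. We also need to check that the admissible set in the infimum is nonempty for every $u\in H^{\half}(\Gamma)$.

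\textbf{Easy direction.} Let $U\in H^1(\IR^2)$ be admissible for $u$. For each $i$, the classical trace theorem on the Lipschitz domain $\Omega_i$ gives
$$\norm{\gamma_D^i(U|_{\Omega_i})}_{H^{\half}(\partial\Omega_i)}\lesssim \norm{U}_{H^1(\Omega_i)}\le\norm{U}_{H^1(\IR^2)}.$$
The function $\hat u_i:=\gamma_D^i(U|_{\Omega_i})$ is an extension of $u|_{\widehat\Gamma_i}$ to $\partial\Omega_i$. By \cref{eq:norms}, $\norm{u|_{\widehat\Gamma_i}}_{H^{\half}(\widehat\Gamma_i)}\le \norm{\hat u_i}_{H^{\half}(\partial\Omega_i)}$. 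Summing over $i$ and taking the infimum over $U$ yields $\norm{u}_{H^{\half}(\Gamma)}\lesssim\vvvert u\vvvert$.

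\textbf{Hard direction.} I would first treat $u\in\C^\infty(\Gamma)$ and then extend by density, since $\C^\infty(\Gamma)$ is dense in $H^{\half}(\Gamma)$ by definition \cref{eq:hsdefi}.

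Fix $u\in \C^\infty(\Gamma)$. For each $i$, pick $\hat u_i\in H^{\half}(\partial\Omega_i)$ extending $u|_{\widehat\Gamma_i}$ with $\norm{\hat u_i}\le 2\norm{u|_{\widehat\Gamma_i}}_{H^{\half}(\widehat\Gamma_i)}$. Let $U_i\in H^1(\Omega_i)$ be a bounded right inverse of the trace applied to $\hat u_i$ (e.g.\ the harmonic extension, \cite[Thm.~3.37]{mclean2000strongly}). Glue these into $\widetilde U\in L^2(\Omega)$ by setting $\widetilde U=U_i$ on $\Omega_i$.

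The key point is that $\widetilde U\in H^1(\Omega)$. For this it suffices that the traces from the two sides agree across every interior arc $\Gamma_j$, and that $\partial\Omega_i\cap\partial\Omega_j\subset\overline\Gamma$. On $\Gamma_j\subset\widehat\Gamma_i\cap\widehat\Gamma_\ell$, both traces equal $u|_{\Gamma_j}$, so they coincide. Piecewise $H^1$ functions with matching traces on the common interfaces belong to $H^1$ of the union; this is shown by integrating by parts against test functions $\phi\in\C_0^\infty(\Omega)$ and using Green's formula on each Lipschitz $\Omega_i$, where the interface terms cancel because the normals are opposite (as noted after \Cref{def:multiarc}). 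Moreover $\norm{\widetilde U}_{H^1(\Omega)}^2=\sum_i\norm{U_i}^2\lesssim\norm{u}_{H^{\half}(\Gamma)}^2$.

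Next apply the extension operator $\mathcal E$ of \Cref{lemma:extension} to obtain $U:=\mathcal E\widetilde U\in H^1(\IR^2)$ with $\norm{U}_{H^1(\IR^2)}\lesssim \norm{u}_{H^{\half}(\Gamma)}$ and $U|_{\Omega_i}=U_i$. This $U$ is admissible, so $\vvvert u\vvvert\lesssim\norm{u}_{H^{\half}(\Gamma)}$ on the dense subspace.

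\textbf{Passage to general $u$.} Take $u\in H^{\half}(\Gamma)$ and a sequence $u_k\in\C^\infty(\Gamma)$ with $u_k\to u$. The lifting $u\mapsto U$ constructed above is not obviously linear, because of the quasi-optimal choice of $\hat u_i$. To avoid this, I would instead use a linear bounded extension $H^{\half}(\widehat\Gamma_i)\to H^{\half}(\partial\Omega_i)$, which exists for Lipschitz arcs by \cite[Chap.~3]{mclean2000strongly}. The whole map $u\mapsto U$ is then linear and bounded, and it extends by continuity. Since the traces $\gamma_D^i$ are continuous, the limit is still admissible for $u$. This also shows that the admissible set is nonempty and that $\vvvert\cdot\vvvert$ is a norm: it is a quotient-type seminorm, and it is definite because of the first inequality.

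\textbf{Main obstacle.} The delicate step is the gluing: showing $\widetilde U\in H^1(\Omega)$ when the $\partial\Omega_i$ meet at branch points and the trace equality holds only on the open arcs. Since points have zero capacity in two dimensions and the interfaces $\partial\Omega_i\cap\partial\Omega_j$ differ from unions of arcs $\Gamma_j$ only by finitely many points, the Green's-formula argument still goes through. I would make this rigorous by working with $u$ smooth, so that the $U_i$ can be taken continuous up to the boundary, and checking that the distributional gradient has no singular part.
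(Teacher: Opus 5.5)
Your proof is correct and follows essentially the same route as the paper. The paper also combines three steps, written only as a chain of infima for $u\in\C^\infty(\Gamma)$: the trace/inverse-trace equivalence on each $\widehat{\Gamma}_i\subset\partial\Omega_i$, gluing the local liftings into $H^1(\Omega)$ through matching traces on shared arcs, and passage from $H^1(\Omega)$ to $H^1(\IR^2)$ via the quotient-norm definition, which your use of $\mathcal{E}$ realizes. Your extra care with non-emptiness, density and the norm property fills in what the paper leaves implicit. In fact your lifting never uses smoothness of $u$: the components of any $u\in H^{\half}(\Gamma)$ already agree on shared arcs, so the construction applies directly and the detour through a linear extension operator is unnecessary.
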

\begin{proof}
 Let $u\in C^\infty(\Gamma)$ be arbitrary. On each open arc $\widehat{\Gamma}_i$ the $H^{\frac{1}{2}}(\widehat{\Gamma}_i)$ norm is equivalent to the infimum of the $H^1-$norms over all $H^1(\Omega_i)$ possibles extensions, for $i \in \{1,\hdots,n\}$, see \cite[Thm.~2.22]{STEI07}. Hence, we obtain
 \begin{align}
  \norm{u}_{H^{\half}(\Gamma)}^2
  &=\sum\limits_{i=1}^n\norm{u}_{H^{\half}(\widehat{\Gamma}_i)}^2 \nno \\
  &\cong\sum\limits_{i=1}^n\inf
   \left\{\norm{U}_{H^1(\Omega_i)}^2 : U\in H^1(\Omega_i)\ \text{such that}\ \gamma_D^iU=u\ \text{in}\ \widehat{\Gamma}_i\right\}\nno \\
  &=\inf\left\{\sum\limits_{i=1}^n\norm{U_i}_{H^1(\Omega_i)}^2 : U_i\in H^1(\Omega_i)\ \text{such that}\ \gamma_D^iU_i=u\ \text{in}\ \widehat{\Gamma}_i\ \forall i\in\{1,\hdots,n\}\right\}\nno\\
  \label{eq:lem:h1split}
  &=\inf\left\{\norm{U}_{H^1(\Omega)}^2 : U\in H^1(\Omega)\ \text{such that}\ \gamma_D^iU\vert_{\Omega_i}=u\ \text{in}\ \widehat{\Gamma}_i\ \forall i\in\{1,\hdots,n\}\right\}\\
  \label{eq:lem:h1rnnorm}
  &=\inf\left\{\norm{U}_{H^1(\IR^2)}^2 : U\in H^1(\IR^2)\ \text{such that}\ \gamma_D^iU\vert_{\Omega_i}=u\ \text{in}\ \widehat{\Gamma}_i\ \forall i\in\{1,\hdots,n\}\right\}
 \end{align}
 where \cref{eq:lem:h1split} follows from the fact that if $U\in L^2(\Omega)$ is such that
 $U\vert_{\Omega_i}\in H^1(\Omega_i)$ for each $i\in\{1,\hdots, n\}$ and
$\gamma_D^iU\vert_{\partial\Omega_i}=\gamma_{D}^jU\vert_{\partial\Omega_j}$ in $\partial\Omega_i\cap\partial\Omega_j$
 for all $i,j\in\{1,\hdots,n\}$, then $U\in H^1(\Omega)$. \Cref{eq:lem:h1rnnorm}, on the other hand, follows from the definition of the norm of $H^1(\Omega)$ (\emph{c.f.}~\cite[Chap.~3]{mclean2000strongly}), and corresponds to the desired result.
\end{proof}

\begin{proposition}
\label{prop:boundNeumanJump}
Let $U \in H^1_{\text{\text{loc}}}(\Delta; \IR^2 \setminus \overline \Gamma)$, $v\in\C^\infty(\Gamma)$ and $V\in\C_0^{\infty}(\IR^2)$ such that it extends $v$ to $\IR^2$. Then, for any $r_0 \in\IR_\Gamma$ it holds that
\begin{align*}
\abs{\lim_{r \rightarrow \infty}\sum_{i=1}^n \langle \gamma_N^i U, \gamma_D^i V \rangle_{ \partial \Omega^r_i}}\leq C_{r_0}\Vert v\Vert_{H^{\half}(\Gamma)}\sum_{i=1}^n \left(\norm{\Delta U}_{L^2(\Omega_i^{r_0})}+\snorm{ U}_{H^1(\Omega_i^{r_0})} \right)
,
\end{align*}
where the positive constant $C_{r_0}$ depends on the chosen $r_0\in\IR_\Gamma$.
Furthermore, the quantity $$\lim\limits_{r \rightarrow \infty}\sum_{i=1}^n \langle \gamma_N^i U, \gamma_D^i V \rangle_{ \partial \Omega^r_i},$$ does not depend on the chosen extension $V$ of $v\in\C^\infty(\Gamma)$.
\end{proposition}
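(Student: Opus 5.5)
The plan is to use Green's first identity on each $\Omega_i^r$, so that the boundary pairings become volume integrals. Then, exploiting that $V$ can be replaced by any $H^1(\IR^2)$ function with the same trace data on $\Gamma$, I take the infimum and invoke \Cref{lem:equiv}.

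\textbf{Step 1 (Green's identity).} Since $U\in H^1(\Omega_i^r;\Delta)$ and $V\in\C_0^\infty(\IR^2)$, the generalized Green formula \cite[Chap.~4]{mclean2000strongly} gives, for each $i$ and $r\in\IR_\Gamma$,
\begin{align*}
\langle \gamma_N^i U,\gamma_D^i V\rangle_{\partial\Omega_i^r}=\int_{\Omega_i^r}\Delta U\, V+\nabla U\cdot\nabla V\,\d\bx .
\end{align*}
Summing over $i$, I use that $\supp V$ is compact and that the sets $\Omega_i^r$ increase with $r$ and exhaust $\IR^2\setminus\overline{\Gamma}$ up to null sets. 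Hence, once $B_{\bm 0}(r)\supset\supp V$, the sum equals $\int_{\IR^2\setminus\overline\Gamma}(\Delta U\,V+\nabla U\cdot\nabla V)\,\d\bx$. So the sum is constant in $r$ for large $r$, and the limit exists and equals this integral.

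\textbf{Step 2 (independence of the extension).} Let $V_1,V_2$ be two extensions of $v$ and set $W=V_1-V_2\in\C_0^\infty(\IR^2)$, so that $\gamma_D W=0$ on $\Gamma$. I must show that $\int(\Delta U\,W+\nabla U\cdot\nabla W)=0$.
\begin{itemize}
\item I apply \Cref{lemma:dens}, after choosing $r$ so that $\supp W\subset B_{\bm 0}(r)$. As written, that lemma only approximates $W$ near $\Omega$, so I will instead use the open-arc density result of Appendix~\ref{app:profDensOpenArc} arc by arc, combined with a partition of unity, to get $W_k\in\C_0^\infty(\IR^2\setminus\overline\Gamma)$ (or at least $\C^2_0$) with $W_k\to W$ in $H^1(\IR^2)$.
\item For each $W_k$, the integral vanishes by the definition of the distributional Laplacian on the open set $\IR^2\setminus\overline\Gamma$.
\item Passing to the limit requires $\Delta U\in L^2$ and $\nabla U\in L^2$ on a fixed ball containing all supports, which holds.
\end{itemize}
The delicate point is ensuring the approximants are supported away from $\overline\Gamma$ near branch points. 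The second extension $\widehat U$ in \Cref{lemma:dens} is designed for exactly this, with $\chi$ localizing; I would combine it with a cut-off of $W$ outside $\Omega$.

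\textbf{Step 3 (the bound).} By Step 2, the limit is unchanged if $V$ is replaced by any $\widetilde V\in H^1(\IR^2)$ with $\gamma_D^i(\widetilde V|_{\Omega_i})=v$ on $\widehat\Gamma_i$ for all $i$. This requires a density and continuity argument so that Step 2 extends to $H^1$ extensions: the expression is continuous in $\widetilde V$ in the $H^1(B_{\bm 0}(r_0))$ norm once the support is fixed.
\begin{itemize}
\item I fix a cut-off $\chi\in\C_0^\infty(B_{\bm 0}(r_0))$ with $\chi=1$ on $\overline{\bigcup_i\Omega_i}$. Replacing $\widetilde V$ by $\chi\widetilde V$ keeps the traces, and $\norm{\chi\widetilde V}_{H^1}\le C_{r_0}\norm{\widetilde V}_{H^1}$.
\item Cauchy--Schwarz on $B_{\bm 0}(r_0)=\bigcup_i\overline{\Omega_i^{r_0}}$ then gives
\begin{align*}
\abs{\lim_{r\to\infty}\cdots}\le C_{r_0}\sum_{i=1}^n\left(\norm{\Delta U}_{L^2(\Omega_i^{r_0})}+\snorm{U}_{H^1(\Omega_i^{r_0})}\right)\norm{\widetilde V}_{H^1(\IR^2)} .
\end{align*}
\item Taking the infimum over $\widetilde V$ and applying \Cref{lem:equiv} yields the factor $\Vert v\Vert_{H^{\half}(\Gamma)}$.
\end{itemize}

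The main obstacle is Step 2 near branch points: the density of functions vanishing near $\overline\Gamma$ in the kernel of $\gamma_D$. I expect to rely on \Cref{lemma:dens} together with the open-arc density result of the appendix.
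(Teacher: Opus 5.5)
Your Steps~1 and~3 have the same architecture as the paper's proof: Green's formula on each $\Omega_i^r$, a cut-off supported in $B_{\bm 0}(r_0)$, Cauchy--Schwarz, then the infimum over extensions and \Cref{lem:equiv}. Reading the vanishing on $\C_0^\infty(\IR^2\setminus\overline\Gamma)$ directly off the distributional Laplacian is cleaner than the paper's detour through the symmetric Green identity.

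The gap is Step~2. You flag it as the main obstacle but do not carry it out, and neither route you sketch closes it as stated. A partition of unity cannot separate the arcs at a junction, since every neighbourhood of a branch point meets all arcs emanating from it. Moreover, the approximants from Appendix~\ref{app:profDensOpenArc} for one arc need not vanish on the others. \Cref{lemma:dens} only gives $\widehat W_k\to\widehat W:=\chi\mathcal{E}(W|_{\Omega})$ in $H^1(\IR^2)$, and $\widehat W$ agrees with $W$ only on $\Omega$. A cut-off alone does not remove this discrepancy near the free endpoints of $\Gamma$, which lie on $\partial\Omega$; you still have to show that $Z:=W-\widehat W$ contributes nothing.

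The paper does this as follows. $Z$ lies in $H^1(\IR^2)$, is supported in $B_{\bm 0}(r)$ and vanishes on $\Omega$. Hence $\int(\Delta U\,Z+\nabla U\cdot\nabla Z)\ \d\bx$ reduces to an integral over $B_{\bm 0}(r)\setminus\overline{\Omega}$, where $U\in H^1(\Delta)$. Green's formula there turns it into $\langle\gamma_{N,\partial\Omega}U,\gamma_{D,\partial\Omega}Z\rangle_{\partial\Omega}=0$, because $Z$ has zero trace on $\partial\Omega$. Combined with your observation that each $\widehat W_k\in\C^2_0(\IR^2\setminus\overline\Gamma)$ contributes zero, this yields Step~2.

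For smooth $W$ there is also an elementary route that ignores branch points altogether. With $L$ the Lipschitz constant of $W$, one has $\abs{W}\le L\,\mathrm{dist}(\cdot,\overline\Gamma)$. Take $\rho=0$ on $[0,1]$ and $\rho=1$ on $[2,\infty)$. Then $W\rho(\mathrm{dist}(\cdot,\overline\Gamma)/\delta)$ vanishes near $\overline\Gamma$. It differs from $W$ in $H^1(\IR^2)$ by at most a constant times the square root of the area of the $2\delta$-neighbourhood of $\overline\Gamma$, which tends to zero.

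Step~3 has a related weak point. Continuity of the functional in $H^1(B_{\bm 0}(r_0))$ does not by itself let you pass from smooth to $H^1$ extensions, because smooth approximants of $\widetilde V$ no longer restrict to $v$ on $\Gamma$. What you actually need is that $W\mapsto\int(\Delta U\,W+\nabla U\cdot\nabla W)\ \d\bx$ vanishes on every compactly supported $W\in H^1(\IR^2)$ with $\gamma_D W=0$. The bound then follows by subtracting such $W$ from $\chi V$ and applying \Cref{lem:equiv}.

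The distance cut-off does not give this, since an $H^1$ function with zero trace need not be Lipschitz. The argument above does, because the proof of \Cref{lemma:dens}, including Appendix~\ref{app:profDensOpenArc}, works verbatim for $H^1$ functions supported in $B_{\bm 0}(r)$. The paper is equally brief here, invoking only the density of $\C_0^\infty(\IR^2)$ in $H^1(\IR^2)$, so this imprecision is shared; your write-up should still state it explicitly.
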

\begin{proof}
 Let $U \in H^1_{\text{\text{loc}}}(\IR^2 \setminus \overline \Gamma; \Delta)$ and consider, first, $V\in\C^\infty_0(\IR^2\setminus\overline{\Gamma})$ and $r\in\IR_\Gamma$ such that
 $\supp(V)\subset B_{\bm{0}}(r)$. Then, we have that
 \begin{align*}
  \int\limits_{\IR^2\setminus\overline \Gamma} \Delta U(\bx)V(\bx)\ \d\bx
  &=\int\limits_{B_{\bm{0}}(r)\setminus \overline{\Gamma}} \Delta U(\bx)V(\bx)\ \d\bx\\
  &=\sum\limits_{i=1}^n\int\limits_{\Omega_i^r} \Delta U(\bx)V(\bx)\ \d\bx\\
  &= \sum\limits_{i=1}^n -\int\limits_{\Omega_i^r} \nabla U(\bx)\cdot \nabla V(\bx)\ \d\bx +
  \dup{\gamma_N^i U}{\gamma_D^i V}_{\partial\Omega_i^r}.
 \end{align*}
 Analogously, it holds that
 \begin{align*}
  \int\limits_{\IR^2\setminus \overline \Gamma} \Delta V(\bx) U(\bx)\ \d\bx
  = \sum\limits_{i=1}^n -\int\limits_{\Omega_i^r} \nabla V(\bx)\cdot \nabla U(\bx)\ \d\bx
  + \dup{\gamma_N^i V}{\gamma_D^i U}_{\partial\Omega_i^r},
 \end{align*}
 and, by definition of the weak derivatives, that
 $$\int\limits_{\IR^2\setminus \overline \Gamma} \Delta U(\bx) V(\bx)\ \d\bx = \int\limits_{\IR^2\setminus\overline \Gamma} \Delta V(\bx) U(\bx)\ \d\bx,$$
so that by combining the above computations it follows \ that
$$\sum\limits_{i=1}^n\dup{\gamma_N^i U}{\gamma_D^i V}_{\partial\Omega_i^r} = \sum\limits_{i=1}^n\dup{\gamma_N^i V}{\gamma_D^i U}_{\partial\Omega_i^r}.$$
Moreover, since $\gamma_D^iU\in H^{\frac{1}{2}}(\partial\Omega_i^r)$ and $V\in\C_0^\infty(\IR^2\setminus\overline{\Gamma})$, it holds that
$$
\sum\limits_{i=1}^n\dup{\gamma_N^i U}{\gamma_D^i V}_{\partial\Omega_i^r} = 
\sum\limits_{i=1}^n
\int\limits_{\partial \Omega_i^r} \partial_{\bm{n}}^i V(\bx) \gamma_D^i U(\bx) \ \d\bx,
$$
and recalling that $V=0$ on a neighborhood of $\Gamma$ and of $\partial B_{\bm{0}}(r)$ yields
\begin{align}
\label{eq:nullsum}
\sum\limits_{i=1}^n
\int\limits_{\partial \Omega_i^r} \partial_{\bm{n}}^i V(\bx) \gamma_D^i U(\bx) \ \d\bx=
\sum\limits_{i=1}^n
\int\limits_{\partial \Omega_i^r \setminus (\overline {\Gamma} \cup\partial B_{\bm{0}}(r)) } \partial_{\bm{n}}^i V(\bx) \gamma_D^i U(\bx) \ \d\bx.
\end{align}
However, since $U \in H^1_{\loc}(\IR^2 \setminus \overline{\Gamma})$, it follows that the jump of the Dirichlet trace of $U$ across interfaces of the form $$\partial\Omega_i^r\setminus\Gamma\quad\forall\ i\in\{1,\hdots,n\}$$ is $0$, from where it follows that the sum in \cref{eq:nullsum} must also be equal to $0$ and, therefore,
\begin{align}
\label{eq:prop:intRes1}
\sum\limits_{i=1}^n\dup{\gamma_N^i U}{\gamma_D^i V}_{\partial\Omega_i^r}=0.
\end{align}

We now consider $V \in \mathcal{C}^2_0(\IR^2 \setminus \overline{\Gamma})$ with support in $B_0(r)$, for which there exists an open neighborhood of $\Gamma$ where $V(\bx)=0$ and, therefore, there also exists some $\epsilon >0$ such that $V(\bx)=0$ for all $\bx\in\Gamma_\epsilon$, where $\Gamma_\epsilon$ is as in 
\Cref{def:multiarc}. Then, there exists a sequence $\{V_k\}_{k \in \IN} \subset \mathcal{C}^\infty_0(B_0(r) \setminus \overline{\Gamma_\epsilon})$ such that $\{V_k\}_{k\in\IN}$ converges to $V$ in $H^1(\IR^2)$ (see \cite[Thm.~3.40]{mclean2000strongly}). It follows by our previous result in \cref{eq:prop:intRes1} that
$$
\sum_{i=1}^n \langle \gamma_{N}^i U, \gamma_D^i V_k\rangle_{\partial \Omega_i^r} = 0 \quad \forall k \in \IN,  
$$
which directly implies that 
$$
\sum_{i=1}^n \langle \gamma_{N}^i U, \gamma_D^i V\rangle_{\partial \Omega_i^r} = 0.
$$
 
Now consider $V,W\in \C^\infty_0(\IR^2)$ such that both extend a given $v\in\C^\infty(\Gamma)$, and take $r\in\IR_\Gamma$ such that both
$V$ and $W$ are supported in $B_0{(r)}$. Then, by \Cref{lemma:dens} and since $\gamma_D(V-W)=0$, there exists a sequence $\{D_k\}_{k \in \IN} \subset \C^2_0(\IR^2 \setminus \overline \Gamma )$ and some $D \in H^1(\IR^2)$ with support in $B_{\bm{0}}(r)$, such that $D_k$ converges to $V-W$ in $H^1(\Omega)$ and to $D$ in $H^1(\IR^2)$. Then, it holds that
$$
\sum_{i=1}^n \langle \gamma_N^i U , \gamma_D^i (V- W) \rangle_{\partial \Omega_i^r } = 
\sum_{i=1}^n \langle \gamma_N^i U , \gamma_D^i (V- W - D_k) \rangle_{\partial \Omega_i^r }  +  
\sum_{i=1}^n \langle \gamma_N^i U , \gamma_D^i  D_k \rangle_{\partial \Omega_i^r } .
$$
From our previous computation and the fact that $D_k \in \mathcal{C}^2_0(\IR^2\setminus \overline{\Gamma})$, we have that $\sum_{i=1}^n \langle \gamma_N^i U , \gamma_D^i  D_k \rangle_{\partial \Omega_i^r }=0$. Furthermore, we have that
$$
\lim_{k \rightarrow \infty} \sum_{i=1}^n \langle \gamma_N^i U , \gamma_D^i (V- W - D_k) \rangle_{\partial \Omega_i^r } = \sum_{i=1}^n \langle \gamma_N^i U , \gamma_D^i (V- W - D) \rangle_{\partial \Omega_i^r }.
$$
An application of Green's formula yields
\begin{align*}
&\sum_{i=1}^n \langle \gamma_N^i U , \gamma_D^i (V- W - D) \rangle_{\partial \Omega_i^r } \\
&= \sum_{i=1}^n \int\limits_{\Omega_i^r} \Delta U(\bx) (V(\bx)-W(\bx)- D(\bx)) - \nabla U(\bx) \cdot \nabla (V(\bx)-W(\bx)-D(\bx))\ \d \bx.
\end{align*}
Moreover, since $D = V-W$ in $\Omega$ since the supports of $D$, $V$ and $W$ are all contained in $B_0(r)$, we have
\begin{equation*}
\begin{split}
    &\sum_{i=1}^n \langle \gamma_N^i U , \gamma_D^i (V- W - D) \rangle_{\partial \Omega_i^r } \\
    &= 
\sum_{i=1}^n \int\limits_{\Omega_i^r \setminus \Omega} \Delta U(\bx) (V(\bx)-W(\bx)- D(\bx)) - \nabla U(\bx) \cdot \nabla (V(\bx)-W(\bx)-D(\bx))\ \d \bx \\
&= \int\limits_{\IR^2 \setminus \Omega} \Delta U(\bx) (V(\bx)-W(\bx)- D(\bx)) - \nabla U(\bx) \cdot \nabla (V(\bx)-W(\bx)-D(\bx))\ \d \bx \\
&= \langle \gamma_{N,\partial \Omega} U, \gamma_{D,\partial \Omega} (V-W- D) \rangle _{\partial \Omega},
\end{split}
\end{equation*}
where $\gamma_{N,\partial \Omega}, \gamma_{D,\partial \Omega}$ are the (exterior) Neumann and Dirichlet traces on $\partial \Omega$. Then, since $D=V-W$ in $H^1(\Omega)$ and the fact $D$, $V$ and $W$ are continuous in $\IR^2$, we have that
$$
\langle \gamma_{N,\partial \Omega} U, \gamma_{D,\partial \Omega} (V-W- D) \rangle _{\partial \Omega} = 0,
$$
which implies that
$$
\sum_{i=1}^n \langle \gamma_N^i U , \gamma_D^i (V- W) \rangle_{\partial \Omega_i^r } = 0,
$$
and we have shown that $\lim_{r \rightarrow \infty} \sum_{i=1}^n \langle \gamma_N^i U, \gamma_D^i V\rangle_{\partial \Omega_i^r}$ is independent of the chosen extension of $v\in\C^\infty(\Gamma)$.

Finally take $v \in \C^\infty(\Gamma)$, an arbitrary extension $V \in \C_0^\infty(\IR^2)$, some fixed $r_0 \in \IR_\Gamma$ and consider a smooth window function $\chi$ such that $\chi(\bx) =1 $ in a non-specified open neighborhood of $\Gamma$ and $\supp(\chi) \subset B_{\bm{0}}(r_0)$.
Then, we have that 
$$
\lim_{r \rightarrow \infty} \sum_{i=1}^n \langle \gamma_N^i U , \gamma_D^i V \rangle_{\partial \Omega_i^r } = \lim_{r \rightarrow \infty} \sum_{i=1}^n \langle \gamma_N^i U , \gamma_D^i (\chi V) \rangle_{\partial \Omega_i^r } = \sum_{i=1}^n \langle \gamma_N^i U , \gamma_D^i (\chi V) \rangle_{\partial \Omega_i^{r_0} }.
$$
Another application of Green's formula yields
$$
\sum_{i=1}^n \langle \gamma_N^i U , \gamma_D^i (\chi V) \rangle_{\partial \Omega_i^{r_0}}
 = \sum_{i=1}^n \int\limits_{\Omega_i^{r_0}} \Delta U(\bx) \chi(\bx) V(\bx) + \chi(\bx) \nabla U(\bx) \cdot \nabla V(\bx)+ V(\bx)\nabla \chi(\bx) \cdot \nabla U(\bx)\ \d\bx, 
$$
which immediately implies that 
\begin{align*}
    \left\lvert \lim_{r \rightarrow \infty} \sum_{i=1}^n \langle \gamma_N^i U , \gamma_D^i V \rangle_{\partial \Omega_i^r } \right\rvert
 &\lesssim \sum_{i=1}^n \left(\Vert \Delta U \Vert_{L^2(\Omega_i^{r_0}) }+ |U|_{H^1(\Omega_i^{r_0})}|\chi|_{H^1(\Omega_i^{r_0})}\right)\Vert V\Vert_{H^1(\Omega_i^{r_0})} \\
 &\lesssim C_{r_0} \Vert V\Vert_{H^1(\IR^2)}
 \sum_{i=1}^n \left(\Vert \Delta U \Vert_{L^2(\Omega_i^{r_0}) }+ |U|_{H^1(\Omega_i^{r_0})}\right).
\end{align*}
Since the left-hand side of the last equation is independent of the chosen extension,
we may take the infimum over all extensions of $v$ and the result follows from \Cref{lem:equiv}
and the density of $\C_0^\infty(\IR^2)$ in $H^1(\IR^2)$, see \cite[Cap.~3]{mclean2000strongly}.
\end{proof}

\Cref{prop:boundNeumanJump} motivates the following operator as an extension of the jump of the normal derivative across multi-arc geometries.

\begin{definition}[Neumann Jump]
\label{def:NeuJump}
For all $U \in H^1_{\text{\text{loc}}}(\IR^2 \setminus \overline \Gamma; \Delta)$ 
its \emph{Neumann jump} is the functional $[\gamma_N U] \in (H^{\half}(\Gamma))^\star$ defined as the unique extension to $H^{\half}(\Gamma)$ of
\begin{align}
\label{eq:jumpNeuDef}
  \langle [\gamma_N U],v\rangle_\Gamma \coloneqq -\lim_{r\rightarrow \infty} \sum_{i=1}^n \langle \gamma_N^i U, \gamma_D^i V \rangle_{ \partial \Omega^r_i}\quad\forall v\in\C^\infty(\Gamma),
\end{align}
where $V \in \C_0^\infty(\IR^2)$ is an arbitrary extension of $v\in\C^\infty(\Gamma)$.
\end{definition}
Thanks to \Cref{prop:boundNeumanJump}, it is clear that the Neumann jump of $U$ is well defined.
The following result gives us a more flexible representation of the Neumann jump.
\begin{proposition}
\label{prop:NeuJumpExt}
Let $U \in H^1_{\text{\text{loc}}}(\IR^2 \setminus \overline \Gamma; \Delta)$ and, additionally, assume that:
\begin{enumerate}
\item There exists some $r_0>0$, such that $U \in H^2_{\text{loc}}({\overline{B_{\bm{0}}(r_0)}}^c)$. 
\item $U(\bx) = \cO(\Vert\bx\Vert^{-1})$ and $\nabla U(\bx) = \cO(\Vert\bx\Vert^{-2})$ as $\Vert\bx\Vert\rightarrow \infty$. 
\end{enumerate}
Then, for any $v \in H^{\half}(\Gamma)$ that has an extension $V$ in $H^1_{\text{loc}}(\IR^2)$ satisfying $V(\bx) = \cO(\Vert\bx\Vert^{-1})$ and $\nabla V(\bx) = \cO(\Vert\bx\Vert^{-2})$ as $\Vert\bx\Vert \rightarrow \infty$, we have
$$\langle [\gamma_N U], v \rangle_\Gamma = -\lim_{r \rightarrow \infty} \sum_{i=1}^n \langle \gamma_N^i U, \gamma_D^i V \rangle_{\partial \Omega_i^r}.$$
\end{proposition}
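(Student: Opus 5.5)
The strategy is to reduce to the defining formula \cref{eq:jumpNeuDef} by cutting $V$ off with a window function. Error terms at infinity are then controlled by Green's formula on annuli together with the decay assumptions. Fix $r_1 > r_0$ with $r_1\in\IR_\Gamma$ and $\overline{\Omega}\subset B_{\bm 0}(r_1)$. Choose $\chi\in\C_0^\infty(\IR^2)$ with $\chi\equiv 1$ on $B_{\bm 0}(r_1)$. Split $V=\chi V+(1-\chi)V$; the second piece vanishes near $\overline{B_{\bm 0}(r_1)}$, hence near $\Gamma$.

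\textbf{Step 1 (the compactly supported part).} Here $\chi V\in H^1(\IR^2)$ has compact support and trace $v$ on $\Gamma$. For $r$ large enough that $\supp\chi\subset B_{\bm 0}(r)$, the sum $\sum_i\langle\gamma_N^iU,\gamma_D^i(\chi V)\rangle_{\partial\Omega_i^r}$ is independent of $r$. By Green's formula it equals
$$\sum_i\int_{\Omega_i^r}\Delta U\,\chi V+\nabla U\cdot\nabla(\chi V)\,\d\bx,$$
which is continuous in $\chi V$ with respect to $H^1$. Take $V_k\in\C_0^\infty(\IR^2)$ with $V_k\to\chi V$ in $H^1(\IR^2)$ and supports in a fixed ball. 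Then $v_k:=V_k|_\Gamma\to v$ in $H^{\half}(\Gamma)$ by continuity of $\gamma_D$. By \Cref{def:NeuJump} and the bound of \Cref{prop:boundNeumanJump}, passing to the limit gives $\langle[\gamma_NU],v\rangle_\Gamma=-\sum_i\langle\gamma_N^iU,\gamma_D^i(\chi V)\rangle_{\partial\Omega_i^r}$. One subtlety is that $v_k$ must lie in $\C^\infty(\Gamma)$; this holds since $v_k$ is a restriction of a $\C_0^\infty(\IR^2)$ function.

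\textbf{Step 2 (the part at infinity).} Set $W=(1-\chi)V$. It suffices to show $\lim_{r\to\infty}\sum_i\langle\gamma_N^iU,\gamma_D^iW\rangle_{\partial\Omega_i^r}=0$. Since $W$ and $U$ are $H^1_{\text{loc}}$ across all interfaces away from $\Gamma$, the same cancellation used for \cref{eq:nullsum} applies. Moreover, $U\in H^2_{\text{loc}}$ outside $\overline{B_{\bm 0}(r_0)}$, so normal traces match across interior interfaces. Only the outer circle survives:
$$\sum_i\langle\gamma_N^iU,\gamma_D^iW\rangle_{\partial\Omega_i^r}=\int_{\partial B_{\bm 0}(r)}\partial_{\bm n}U\,W\,\d s \qquad (r\text{ large}).$$
Equivalently, apply Green's formula on the annulus $B_{\bm 0}(r)\setminus \overline{B_{\bm 0}(r_1)}$, where $W=V$ near the outer boundary and $W$ vanishes near the inner boundary. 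The decay assumptions then give
$$\Big|\int_{\partial B_{\bm 0}(r)}\partial_{\bm n}U\,V\,\d s\Big|\lesssim 2\pi r\cdot r^{-2}\cdot r^{-1}=\cO(r^{-2})\to 0.$$
Combining with Step 1 and noting $\chi V+W=V$ yields the claim. The limit for $V$ exists because the $\chi V$ part is eventually constant in $r$.

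\textbf{Main obstacle.} The delicate point is making sense of $\gamma_N^iU$ on the pieces of $\partial\Omega_i^r$ near $\partial B_{\bm 0}(r)$. The assumption $U\in H^1_{\text{loc}}(\IR^2\setminus\overline\Gamma;\Delta)$ only gives $\gamma_N^iU\in H^{-\half}$, and the pointwise decay estimate requires an honest function. I would use hypothesis 1 for this: for $r>r_0$, $U$ is $H^2$ near $\partial B_{\bm 0}(r)$, so $\partial_{\bm n}U$ is an $L^2$ (indeed $H^{\half}$) function there. The pairing then becomes an integral to which the $\cO$-bounds apply. The decay bounds should be read as holding a.e. on circles, or applied after mollification.
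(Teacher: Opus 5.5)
Your proof is correct and follows essentially the same route as the paper. You split $V=\chi V+(1-\chi)V$ and handle the compactly supported part by density of $\C_0^\infty(\IR^2)$, Green's formula and continuity of $[\gamma_N U]$ on $H^{\half}(\Gamma)$; you handle the far-field part by Green's formula on the annulus, using the $H^2_{\mathrm{loc}}$ hypothesis, together with the decay bounds, which yield an $\cO(r^{-2})$ term on $\partial B_{\bm 0}(r)$. The only cosmetic difference is that you approximate $\chi V$ directly, whereas the paper approximates $V$ in $H^1(B_{\bm 0}(r_2))$ and then multiplies by $\chi$.
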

\begin{proof}
Let $r_1,r_2 \in \IR_{\Gamma}$, with $r_2 > r_1>r_0$, and let $\chi\in\C^\infty_0(\IR^2)$ be a smooth window function with compact support in $B_{\bm{0}}(r_2)$, such that $\chi(\bx) = 1$
for all $\bx \in B_{\bm{0}}(r_1)$. Take $v \in H^{\frac{1}{2}}(\Gamma)$ and an extension
$V \in H^1_{\text{loc}}(\IR^2)$ satisfying the assumptions in the statement.
By the density of $\C^\infty_0(\IR^2)$ in $H^1(B_{\bm{0}}(r_2))$, there exists a sequence $\{V_k\}_{k \in \IN} \subset \C_0^\infty(\IR^2)$
which converges to $V$ in $H^1(B_{\bm{0}}(r_2))$, for which it holds that
\begin{align*}
\langle 
[\gamma_NU],v\rangle_\Gamma
= \lim_{k \rightarrow \infty}\langle [\gamma_NU],\gamma_D^i \chi V_k\rangle_\Gamma
= -\lim_{k \rightarrow \infty}\lim_{r \rightarrow \infty}
\sum_{i=1}^n \langle\gamma_N^i U,\gamma_D^i \chi V_k\rangle_{\partial\Omega_i^r}.
\end{align*}

Furthermore, we have that
\begin{align}\nonumber
-\lim_{k \rightarrow \infty}\lim_{r \rightarrow \infty} &
\sum_{i=1}^n \langle\gamma_N^i U,\gamma_D^i \chi V_k\rangle_{\partial\Omega_i^r} +  \lim_{r \rightarrow \infty} \sum_{i=1}^n \langle \gamma_N^iU, \gamma_D^i V\rangle_{\partial \Omega_i^r} \\
\nonumber
&= -\lim_{k \rightarrow \infty} \lim_{r \rightarrow \infty}\sum_{i=1}^n \langle 
\gamma_N^i U,\gamma_D^i \chi (V_k-V)\rangle_{\partial \Omega_i^{r}} + \sum_{i=1}^n \langle \gamma_N^i U, \gamma_D^i (1-\chi)V\rangle_{\partial \Omega_i^r}\\
\label{eq:proplimklimr}
&= -\lim_{k \rightarrow \infty} \sum_{i=1}^n \langle 
\gamma_N^i U,\gamma_D^i \chi (V_k-V)\rangle_{\partial \Omega_i^{r_2}} + \lim_{r \rightarrow \infty} \sum_{i=1}^n \langle \gamma_N^i U, \gamma_D^i (1-\chi)V\rangle_{\partial \Omega_i^r}.
\end{align}
For the first term in \cref{eq:proplimklimr}, we can follow the same strategy as in proof of \Cref{prop:boundNeumanJump} (namely, Green's formula) to obtain:
 $$
 \left\lvert\sum_{i=1}^n \langle 
\gamma_N^i U,\gamma_D^i \chi (V_k-V)\rangle_{\partial \Omega_i^{r_2}}\right\rvert\lesssim C_{r_2} \left(\sum_{i=1}^n \Vert\Delta U\Vert_{L^2(\Omega_i^{r_2})} + \Vert \nabla U \Vert_{L^2(\Omega_i^{r_2})}\right) \Vert V_k- V\Vert_{H^{1}(B_{\bm{0}}(r_2))}.
 $$
For the second term in \cref{eq:proplimklimr}, assuming that $r>r_1$ and noting that
$U\in H^2(B_0(r)\setminus \overline{B_0(r_1)})$ and that $1-\chi(\bx)=0$ in $\overline{B_{\bm{0}}(r_1)}$, it holds that
\begin{align*}
\sum_{i=1}^n \langle 
\gamma_N^i U,\gamma_D^i (1-\chi) V\rangle_{\partial \Omega_i^r}
&=
\sum_{i=1}^n 
\int\limits_{\Omega_i^r} \Delta U(\bx)(1-\chi(\bx)) V(\bx) + \nabla U(\bx)\nabla\left((1-\chi(\bx)) V(\bx)\right)\ \d\bx
\\&=
\sum_{i=1}^n 
\int\limits_{\Omega_i^r\setminus B_0(r_1)} \Delta U(\bx)(1-\chi(\bx)) V(\bx) + \nabla U(\bx)\nabla\left((1-\chi(\bx)) V(\bx)\right)\ \d\bx
\\&=
\int\limits_{B_0(r)\setminus B_0(r_1)} \Delta U(\bx)(1-\chi(\bx)) V(\bx) + \nabla U(\bx)\nabla\left((1-\chi(\bx)) V(\bx)\right)\ \d\bx
\\&=
\int\limits_{\partial (B_0(r) \setminus  B_{\bm{0}}(r_1)) } \partial_{\bm{n}} U(\bx)(1-\chi(\bx)) V(\bx)\ \d\bx 
\\&=
\int\limits_{\partial B_0(r)} \partial_{\bm{n}} U(\bx)(1-\chi(\bx)) V(\bx)\ \d\bx,
 \end{align*}
 where $\partial_{\bm{n}} U$ denotes the normal derivative of $U$. Furthermore,
$$ \left\lvert \int\limits_{\partial B_{\bm{0}}(r)} \partial_{\bm{n}} U(\bx) (1-\chi(\bx)) V(\bx)\ \d\bx \right\rvert \lesssim \int\limits_{\partial B_{\bm{0}}(r)} \frac{1}{\Vert \bx\Vert ^3}\ \d\bx 
=2 \pi\frac{1}{r^2}.
$$
Therefore, both of the terms in \cref{eq:proplimklimr} converge to $0$ as $k$ and $r$ grow towards infinity, yielding the result of the proposition.

\begin{remark}
    The previous result is sub-optimal, in the sense that we only need that $$\lim_{\Vert\bx\Vert \rightarrow \infty}  \Vert x\Vert \vert U(\bx)\vert \vert V(\bx)\vert = 0,$$ for the result to holds. However, we will only use the above characterization of the Neuman jump for the case where $U,V$ are integral potentials, which have the supposed decay properties .  
\end{remark}

\end{proof}

\subsection{Integral Formulation for Dirichlet Problem}
\label{sec:DirProb}
In this section, we consider the Laplace equation, as shown in \cref{eq:lap},
subject to the inhomogeneous Dirichlet boundary condition
\[
\gamma_D U = f\quad \text{on } \Gamma,
\]
where $f$ is some prescribed boundary datum (the precise regularity assumptions on $f$ will be specified below). Our goal is to reformulate this boundary value problem as a boundary integral equation on the multi-arc $\Gamma$.

\paragraph{Single-Layer Potential.} Let $G(\bx)$ denote the fundamental solution of the Laplacian in $\IR^2$, i.e.,
\begin{align*}
G(\bx) \coloneqq -\frac{1}{2\pi}\log \Vert\bx\Vert.
\end{align*}
For a given density $\lambda \in L^2(\Gamma)$, we define the \emph{single-layer potential} as
\begin{align*}
\cS\lambda(\bx) \coloneqq \int\limits_{\Gamma} G(\bx-\by)\,\lambda(\by)\ \d\by, \quad \bx \in \IR^2 \setminus \overline{\Gamma}.
\end{align*}
The single-layer on $\Gamma$ is related to the classical single-layer potential on open arcs as follows:
\begin{equation}
\label{eq:SLrep}
\cS\lambda(\bx)
= \half\sum_{i=1}^n \int\limits_{\widehat{\Gamma}_i}
G(\bx-\by)\,\lambda_i(\by)\,d\by
= \half\sum_{i=1}^n \cS_{\widehat{\Gamma}_i}\lambda(\bx),
\end{equation}
where, for each $i\in\{1,\hdots,n\}$, $\cS_{\widehat{\Gamma}_i}$ denotes the single-layer potential associated with the open arc $\widehat{\Gamma}_i$.
This last equation motivates the following definition for the single-layer potential acting on $\lambda\in H^{-\frac{1}{2}}(\Gamma)$.

\begin{definition}[Single-Layer Potential]
\label{defi:SL}
For $\lambda\in \widetilde{H}^{-\frac{1}{2}}(\Gamma)$, the single-layer potential acting on the density $\lambda$
is defined through the single-layer potentials associated with the open arcs $\{\widehat\Gamma_i\}_{i=1}^n$
(as in \Cref{def:multiarc}) as
\begin{equation*}
\cS\lambda
:= \half\sum_{i=1}^n \cS_{\widehat{\Gamma}_i}\lambda.
\end{equation*}
\end{definition}

\begin{proposition}
\label{prop:SL}
  The single-layer potential $\cS$ on $\Gamma$ satisfies the following properties:
\begin{enumerate}
  \item $\displaystyle
  \cS = \half\sum_{i=1}^n \mathcal{N}\circ{\gamma_D^{i,\star}}$,
  where, for each $i\in\{1,\hdots,n\}$, ${\gamma_D^{i,\star}}$ denotes the adjoint of the Dirichlet trace on $\widehat{\Gamma}_i$, and $\mathcal{N}$ is the Newton potential on $\IR^2$.
  \item $\Delta (\cS\lambda) = 0$ on $\IR^2\setminus\overline{\Gamma}$.
  \item For any $\chi\in C_0^\infty(\IR^2)$,
  \[
  \chi\cS : \widetilde{H}^{-\half}(\Gamma)\to H^1(\IR^2)
  \]
  is a bounded and linear operator.
  \item $\cS\lambda\in H^2(\omega)$ for any open and bounded set
  $\omega\subset\IR^2\setminus\overline{\Gamma}$.
\end{enumerate}
\end{proposition}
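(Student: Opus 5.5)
The plan is to reduce every claim to the classical theory of the single-layer potential on an open arc, applied to each $\widehat{\Gamma}_i$ separately, and then sum. Let $\lambda\in\widetilde{H}^{-\half}(\Gamma)$. By the definition in \cref{eq:hsdefi}, its component $\lambda_i:=\lambda|_{\widehat{\Gamma}_i}$ lies in $\widetilde{H}^{-\half}(\widehat{\Gamma}_i)$ for every $i$, and
\[
\norm{\lambda}_{\widetilde{H}^{-\half}(\Gamma)}^2=\sum_{i=1}^n\norm{\lambda_i}_{\widetilde{H}^{-\half}(\widehat{\Gamma}_i)}^2.
\]
By \Cref{defi:SL}, $\cS\lambda=\half\sum_i\cS_{\widehat{\Gamma}_i}\lambda_i$. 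All four claims are linear in $\lambda$ and are stable under finite sums. It therefore suffices to verify each claim for a single open arc $\Lambda=\widehat{\Gamma}_i$ with density in $\widetilde{H}^{-\half}(\Lambda)$, and then combine the component norms.

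For item 1, I use the standard representation on an arc. The Dirichlet trace $\gamma_D^i:H^1_{\loc}(\IR^2)\to H^{\half}(\widehat{\Gamma}_i)$ has adjoint $\gamma_D^{i,\star}:\widetilde{H}^{-\half}(\widehat{\Gamma}_i)\to H^{-1}_{\mathrm{comp}}(\IR^2)$. This adjoint maps $\lambda_i$ to the distribution $\varphi\mapsto\dup{\lambda_i}{\gamma_D^i\varphi}_{\widehat{\Gamma}_i}$, which is supported on $\overline{\widehat{\Gamma}_i}$. Composing with the Newton potential $\mathcal{N}\psi=G*\psi$ gives $\cS_{\widehat{\Gamma}_i}=\mathcal{N}\circ\gamma_D^{i,\star}$. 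I would first check this identity for $\lambda_i\in L^2$, where both sides equal $\int_{\widehat{\Gamma}_i}G(\bx-\by)\lambda_i(\by)\d\by$, and then extend it by density of $\mathcal{D}(\widehat{\Gamma}_i)$ in $\widetilde{H}^{-\half}(\widehat{\Gamma}_i)$. This is the content of \cite[Ch.~6]{mclean2000strongly}, realized on a closed curve $\partial\Omega_i$ containing the arc $\widehat{\Gamma}_i$ and using the extension by zero. Summing over $i$ with the factor $\half$ gives item 1.

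Items 2 through 4 then follow from the mapping properties of $\mathcal{N}$. For item 2, $-\Delta\mathcal{N}\psi=\psi$ in the sense of distributions. Since $\gamma_D^{i,\star}\lambda_i$ is supported in $\overline{\widehat{\Gamma}_i}\subset\overline{\Gamma}$, each term is harmonic off $\overline{\Gamma}$. For item 3, the map $\chi\mathcal{N}:H^{-1}_{\mathrm{comp}}(\IR^2)\to H^1(\IR^2)$ is bounded; more precisely, for distributions supported in a fixed ball $B_{\bm 0}(R)\supset\overline{\Gamma}$ it is bounded by \cite[Thm.~6.1]{mclean2000strongly}. Combining this with $\norm{\gamma_D^{i,\star}\lambda_i}_{H^{-1}}\lesssim\norm{\lambda_i}_{\widetilde{H}^{-\half}(\widehat{\Gamma}_i)}$ and applying Cauchy--Schwarz over the finite sum yields
\[
\norm{\chi\cS\lambda}_{H^1(\IR^2)}\lesssim\norm{\lambda}_{\widetilde{H}^{-\half}(\Gamma)}.
\]
For item 4, let $\omega$ be bounded with $\overline{\omega}\cap\overline{\Gamma}=\emptyset$. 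I would argue this by interior elliptic regularity: $\cS\lambda$ is harmonic in a neighborhood of $\overline{\omega}$, hence smooth there, so its $H^2(\omega)$ norm is controlled by its $H^1$ norm on a slightly larger set, which is finite by item 3. Alternatively, one can use that the kernel $G(\bx-\cdot)$ is smooth in $\by\in\overline{\Gamma}$ for $\bx\in\omega$, and differentiate under the duality pairing.

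The main obstacle is item 4 as literally stated. If $\omega$ is only required to be a subset of $\IR^2\setminus\overline{\Gamma}$, with $\overline{\omega}$ allowed to touch $\Gamma$, then $H^2(\omega)$ regularity fails near the endpoints and the branch points of $\Gamma$. I would therefore interpret the statement as $\overline{\omega}\subset\IR^2\setminus\overline{\Gamma}$, which is how it is used later, in \Cref{prop:NeuJumpExt}, for $\omega$ away from a ball. The only other delicate point is the bookkeeping for the duality pairing in item 1. The factor $\half$ from \cref{eq:L2duality} has to match the factor $\half$ in \Cref{defi:SL}; I would verify this on $L^2$ densities, where \cref{eq:SLrep} already makes the match explicit.
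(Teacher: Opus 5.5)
Your proposal is correct and follows the same route as the paper. The paper's proof simply cites the closed-boundary results of \cite[Chap.~6]{mclean2000strongly} and their open-arc extension in \cite[Sec.~3.5.3]{Sauter:2011}, then sums via \Cref{defi:SL}; you fill in the steps those references leave implicit (the adjoint-trace/Newton-potential identity checked on $L^2$ and extended by density, the $H^{-1}_{\mathrm{comp}}\to H^1_{\loc}$ mapping of $\mathcal{N}$, and interior regularity for item 4). Your caveat on item 4 is also justified: the $H^2(\omega)$ claim is only valid when $\overline{\omega}\cap\overline{\Gamma}=\emptyset$, which is the only case used later, and it can fail for, e.g., $\omega=\Omega_i$ once the density has its endpoint singularities.
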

\begin{proof}
For closed boundaries, these properties are already known to hold (see, for example,
\cite[Chap.~6]{mclean2000strongly}). Their extension to open arcs is discussed in
\cite[Sec.~3.5.3]{Sauter:2011}, from where the multi-arc case follows directly from \Cref{defi:SL}.
\end{proof}

An immediate consequence of \Cref{prop:SL} is that the single-layer potential has \jp{zero} Dirichlet jump across $\Gamma$. 
We continue by analyzing, in the following Lemma, the Neumann jump of the single-layer potential.
We note that its proof closely follows the arguments of the proof of \cite[Thm.~3.3.1]{Sauter:2011}.
\begin{lemma}
\label{lem:Njump}
  If $\lambda \in \widetilde{H}^{-\half}(\Gamma)$, then 
  $$
  [\gamma_N \cS\lambda] = -\lambda \quad\text{in}\quad\widetilde{H}^{-\half}(\Gamma).
  $$
\end{lemma}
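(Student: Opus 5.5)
The plan is to test the Neumann jump of $\cS\lambda$ against smooth functions and reduce everything to the defining property of the Newton potential from \Cref{prop:SL}. First take $\lambda\in\mathcal{D}(\Gamma)$, which is dense in $\widetilde{H}^{-\half}(\Gamma)$ by \cref{eq:hsdefi}. Then take $v\in\C^\infty(\Gamma)$ with an extension $V\in\C_0^\infty(\IR^2)$. Since $\cS\lambda\in H^1_{\loc}(\IR^2\setminus\overline\Gamma;\Delta)$ is harmonic off $\Gamma$ (items 2 and 4 of \Cref{prop:SL}), \Cref{def:NeuJump} applies. Choosing $r$ so large that $\supp V\subset B_{\bm 0}(r)$, Green's formula on each $\Omega_i^r$ gives
\begin{align*}
\langle[\gamma_N\cS\lambda],v\rangle_\Gamma=-\sum_{i=1}^n\langle\gamma_N^i\cS\lambda,\gamma_D^iV\rangle_{\partial\Omega_i^r}
=-\sum_{i=1}^n\int\limits_{\Omega_i^r}\nabla\cS\lambda\cdot\nabla V\,\d\bx
=-\int\limits_{\IR^2}\nabla\cS\lambda\cdot\nabla V\,\d\bx .
\end{align*}
The last equality uses that $\chi\cS\lambda\in H^1(\IR^2)$ (item 3), so that $\Gamma$ is a null set for the gradient.

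Next, use item 1 of \Cref{prop:SL}, namely $\cS\lambda=\half\sum_i\mathcal N(\gamma_D^{i,\star}\lambda)$, together with the distributional identity $-\Delta\mathcal N\mu=\mu$ in $\IR^2$. Applying this with $\mu=\gamma_D^{i,\star}\lambda$ and testing against $V$ gives
$$\int\limits_{\IR^2}\nabla\cS\lambda\cdot\nabla V\,\d\bx=\langle-\Delta\cS\lambda,V\rangle_{\IR^2}=\half\sum_{i=1}^n\langle\lambda,\gamma_D^iV\rangle_{\widehat\Gamma_i}=\langle\lambda,v\rangle_\Gamma .$$
The last step is \cref{eq:L2duality}. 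Combining the two displays yields $\langle[\gamma_N\cS\lambda],v\rangle_\Gamma=-\langle\lambda,v\rangle_\Gamma$ for all $v\in\C^\infty(\Gamma)$.

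By density of $\C^\infty(\Gamma)$ in $H^{\half}(\Gamma)$, this identity determines both functionals in $H^{\half}(\Gamma)^\star$. Through the pairing \cref{eq:dualunitary}, it therefore gives $[\gamma_N\cS\lambda]=-\lambda$ as elements of $\widetilde H^{-\half}(\Gamma)\subset H^{\half}(\Gamma)^\star$, for smooth $\lambda$.

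To pass to general $\lambda\in\widetilde H^{-\half}(\Gamma)$, I would show that $\lambda\mapsto[\gamma_N\cS\lambda]$ is continuous into $H^{\half}(\Gamma)^\star$. The bound in \Cref{prop:boundNeumanJump} controls $[\gamma_N\cS\lambda]$ by $\norm{\Delta\cS\lambda}_{L^2(\Omega_i^{r_0})}+\snorm{\cS\lambda}_{H^1(\Omega_i^{r_0})}$. The Laplacian term vanishes, and by item 3 with a cutoff $\chi\equiv1$ on $B_{\bm 0}(r_0)$ the seminorm term is bounded by $\norm{\lambda}_{\widetilde H^{-\half}(\Gamma)}$. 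Both sides of the identity are then continuous in $\lambda$, and density concludes.

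I expect the main obstacle to be the justification of the middle step, i.e.\ the identity $\int\nabla\cS\lambda\cdot\nabla V=\langle-\Delta\cS\lambda,V\rangle$ with $-\Delta\mathcal N\mu=\mu$ when $\cS\lambda$ is only $H^1_{\loc}$ and not decaying. I would handle this by working with $\chi\cS\lambda$, where $\chi=1$ on $\supp V$, and citing the open-arc result \cite[Thm.~3.3.1]{Sauter:2011} on each $\widehat\Gamma_i$, where $[\gamma_N\cS_{\widehat\Gamma_i}\lambda]=-\lambda$ is known. The factor $\half$ from \Cref{defi:SL} then matches the factor $\half$ in \cref{eq:L2duality}.
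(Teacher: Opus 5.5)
Your proof is correct and follows essentially the paper's route. Both test against an extension $V\in\C_0^\infty(\IR^2)$, apply Green's formula on the $\Omega_i^r$ with $\Delta\cS\lambda=0$, and evaluate $\langle-\Delta\cS\lambda,V\rangle_{\IR^2}$ through $\cS=\half\sum_i\mathcal N\circ\gamma_D^{i,\star}$; the only difference is that the paper uses the symmetric Green identity and discards $\lim_{r\to\infty}\sum_i\langle\gamma_N^iV,\gamma_D^i\cS\lambda\rangle_{\partial\Omega_i^r}$ as the vanishing Neumann jump of the smooth $V$, whereas you glue $\sum_i\int_{\Omega_i^r}\nabla\cS\lambda\cdot\nabla V$ into one integral over $\IR^2$ using $\cS\lambda\in H^1_{\loc}(\IR^2)$. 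Your density step in $\lambda$ and the fallback to the open-arc jump relation are unnecessary: $-\Delta\mathcal N\mu=\mu$ holds for every compactly supported $\mu\in H^{-1}(\IR^2)$, and pairing with a compactly supported $V$ needs no decay of $\cS\lambda$, so your computation applies directly to all $\lambda\in\widetilde H^{-\half}(\Gamma)$, as in the paper.
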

\begin{proof}
  Let $v \in \C^\infty(\Gamma)$, $V \in C_0^\infty(\IR^2)$ be an extension of $v$ and set $U=\cS\lambda$ for some $\lambda \in \widetilde{H}^{-\half}(\Gamma)$. Then,
  it holds that
  \begin{align*}
  \langle \Delta U , V \rangle_{\IR^2} = \langle U , \Delta V \rangle_{\IR^2}
  & = \lim_{r \rightarrow \infty} \sum_{i=1}^n \int\limits_{\Omega_i^r} \Delta V(\bx) U(\bx)\ \d\bx\\
  & = \lim_{r \rightarrow \infty} \sum_{i=1}^n -\int\limits_{\Omega_i^r} \nabla V(\bx) \nabla U(\bx)\ \d\bx + \langle \gamma_N^i V, \gamma_D^i U \rangle_{\partial \Omega_i^r}\\
  & = \lim_{r \rightarrow \infty} \sum_{i=1}^n \int\limits_{\Omega_i^r} \Delta U(\bx)  V(\bx)\ \d\bx - \langle \gamma_N^i U , \gamma_D^i V \rangle_{\partial \Omega_i^r}  + \langle \gamma_N^i V, \gamma_D^i U \rangle_{\partial \Omega_i^r}\\
  & = \lim_{r \rightarrow \infty} \sum_{i=1}^n \langle \gamma_N^i V, \gamma_D^i U \rangle_{\partial \Omega_i^r} - \langle \gamma_N^i U , \gamma_D^i V \rangle_{\partial \Omega_i^r}, 
  \end{align*}
  where, in the last step, we have used that $\Delta U=0$ on $\IR^2\setminus\overline\Gamma$ (see \Cref{prop:SL}).
  The term
    $$
    \lim_{r \rightarrow \infty} \sum_{i=1}^n \langle \gamma_N^i V, \gamma_D^i U \rangle_{\partial \Omega_i^r}
    $$
  corresponds to the Neumann jump of $V$ acting on $U$ (\emph{c.f.}~\Cref{prop:NeuJumpExt,prop:SL}) and is, therefore, $0$ due to the smoothness properties of $V$ and $U$.
  On the other hand, since the Newton potential is the inverse of the $-\Delta$ operator (see \cite[Theorem 3.1.4]{Sauter:2011}), we have that
$$
\langle \Delta U, V \rangle_{\IR^2} = \half\sum_{i=1}^n \langle \Delta \mathcal{N} (\gamma_D^{i,\star}(\widetilde{\lambda})), V\rangle_{\IR^2} = -\half \sum_{i=1}^n \langle \lambda , v\rangle_{\widehat{\Gamma}_i} = -\langle \lambda , v \rangle_{\Gamma},
$$
and we may conclude that 
$$
-\lim_{r \rightarrow \infty} \sum_{i=1}^n \ \langle \gamma_N^i U , \gamma_D^i V \rangle_{\partial \Omega_i^r} =-\langle \lambda , v \rangle_{\Gamma},
 $$
 from where the proof is completed by recalling the definition of the Neumann jump in \Cref{def:NeuJump}.
\end{proof}
\paragraph{Integral Equation and Weakly Singular Operator.} Since $\cS\lambda$ is harmonic in $\IR^2\setminus\overline{\Gamma}$, it is natural to seek solutions to the Dirichlet problem of the form
\begin{align*}
  U = \cS \lambda,
\end{align*}
where $\lambda\in \widetilde H^{-\frac{1}{2}}(\Gamma)$ is an unknown density. Imposing the Dirichlet boundary condition to our representation leads to the boundary integral equation for the Dirichlet problem:
\begin{equation}
\label{eq:DirBIE}
  \gamma_D \cS \lambda = f.
\end{equation}
We therefore introduce the \emph{weakly singular} operator as
$$
\mathcal{V} \lambda  \coloneq \gamma_D\cS \lambda.
$$
\begin{proposition}
The weakly singular operator
$$
\mathcal{V}: \widetilde{H}^{-\half}(\Gamma) \rightarrow H^{\half}(\Gamma),
$$
is linear and bounded on $\widetilde{H}^{-\half}(\Gamma)$. 
\end{proposition}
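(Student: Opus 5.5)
The plan is to write $\mathcal{V}$ as the composition of two bounded maps: a localized single-layer potential into $H^1(\IR^2)$, followed by the Dirichlet trace. Linearity is immediate from \Cref{defi:SL}, since each $\cS_{\widehat{\Gamma}_i}$ is linear and $\gamma_D$ is linear. So only boundedness needs work.

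\textbf{Step 1: localization.} Fix a cutoff $\chi\in\C_0^\infty(\IR^2)$ with $\chi\equiv 1$ on a neighborhood of $\overline{\Gamma}$. The restriction of a function to $\Gamma$ only sees its values near $\Gamma$, so
$$
\gamma_D\cS\lambda=\gamma_D(\chi\cS\lambda).
$$
This identity is clear for smooth functions, since $\gamma_D$ is the extension of the restriction operator. It passes to $H^1_{\loc}$ functions by continuity of $\gamma_D$ on $H^1_{\loc}(\IR^2)$ together with density. Here I use that $\cS\lambda\in H^1_{\loc}(\IR^2)$, which follows from item~3 of \Cref{prop:SL}.

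\textbf{Step 2: bound the potential.} By item~3 of \Cref{prop:SL},
$$
\norm{\chi\cS\lambda}_{H^1(\IR^2)}\lesssim \norm{\lambda}_{\widetilde{H}^{-\half}(\Gamma)}.
$$
If I want to justify this rather than cite it, I would argue arc by arc. For $\lambda\in\widetilde{H}^{-\half}(\Gamma)$, each component $\lambda|_{\widehat{\Gamma}_i}$ lies in $\widetilde{H}^{-\half}(\widehat{\Gamma}_i)$ with norm at most $\norm{\lambda}_{\widetilde{H}^{-\half}(\Gamma)}$, by the definition of the product norm. The classical open-arc mapping property \cite[Sec.~3.5.3]{Sauter:2011} then gives $\norm{\chi\cS_{\widehat{\Gamma}_i}\lambda}_{H^1(\IR^2)}\lesssim\norm{\lambda|_{\widehat{\Gamma}_i}}_{\widetilde{H}^{-\half}(\widehat{\Gamma}_i)}$. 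Summing the $n$ terms, with the factor $\half$ from \Cref{defi:SL}, yields the bound.

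\textbf{Step 3: apply the trace.} The Dirichlet trace $\gamma_D:H^1(\IR^2)\to H^{\half}(\Gamma)$ is bounded; this is the case $s=1$ of the construction in \Cref{sec:Traces}, from \cref{eq:dirTraceArc} and \cref{eq:hsdefi}. Combining with Step 2,
$$
\norm{\mathcal{V}\lambda}_{H^{\half}(\Gamma)}\lesssim\norm{\chi\cS\lambda}_{H^1(\IR^2)}\lesssim\norm{\lambda}_{\widetilde{H}^{-\half}(\Gamma)}.
$$

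\textbf{Main obstacle.} The delicate point is that the target is the closed subspace $H^{\half}(\Gamma)$, not merely the product space $H^{\half}_\times(\Gamma)$. I expect this to be handled by the construction of $\gamma_D$ itself. For $U\in\C_0^\infty(\IR^2)$, $U|_\Gamma\in\C^\infty(\Gamma)$. Approximating $\chi\cS\lambda$ in $H^1(\IR^2)$ by such functions, the traces converge in $H^{\half}_\times(\Gamma)$. Their limit therefore lies in the closure of $\C^\infty(\Gamma)$, which is $H^{\half}(\Gamma)$ by \cref{eq:hsdefi}. This uses the density of $\C_0^\infty(\IR^2)$ in $H^1(\IR^2)$. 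A secondary check is the consistency in Step 1 between the trace of the localized function and the trace of $\cS\lambda$ on each $\widehat{\Gamma}_i$. This is routine, since $\chi=1$ near $\overline{\Gamma}$.
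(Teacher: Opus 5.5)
Your proposal is correct and follows the same route as the paper. The paper's proof is a one-line composition of the bounded map $\chi\cS:\widetilde{H}^{-\half}(\Gamma)\to H^1(\IR^2)$ from \Cref{prop:SL} with the bounded Dirichlet trace $\gamma_D:H^1(\IR^2)\to H^{\half}(\Gamma)$. Your extra points are details the paper leaves implicit, and they are correct: the trace on $H^1_{\loc}(\IR^2)$ does not depend on the cutoff, and the image lies in the closed subspace $H^{\half}(\Gamma)$ because $\gamma_D$ is the continuous extension of restriction from $\C_0^\infty(\IR^2)$.
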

\begin{proof}
This follows immediately from the mapping properties of the single-layer potential (in \Cref{prop:SL}) and the Dirichlet trace.
\end{proof}

To establish the ellipticity of the weakly singular operator, we require the following property which states the decay of the single-layer potential.

\begin{lemma}
\label{lemma:lapdecay}
  Let $\lambda \in \widetilde{H}^{-\half}(\Gamma)$ and set $U = \cS\lambda $. Then
$$
\vert U(\bx)\vert = \sum_{i=1}^n \langle \lambda, 1\rangle_{\widehat{\Gamma}_i} \log\Vert x\Vert+ \cO(\Vert x\Vert^{-1}), \quad \text{as } \bx \rightarrow \infty.
$$
  If, in addition, it holds that $\langle \lambda, 1\rangle_{{\Gamma}} = 0$, then we have that
  \begin{align}
    \vert U(\bx)\vert \lesssim \Vert\lambda\Vert_{\widetilde{H}^{-\half}(\Gamma)}\Vert\bx\Vert^{-1}, \quad \text{and} \quad \Vert\nabla U(\bx)\Vert \lesssim \Vert\lambda\Vert_{\widetilde{H}^{-\half}(\Gamma)} \Vert\bx\Vert^{-2}, \quad \text{as } \Vert\bx\Vert \rightarrow \infty. 
  \end{align}
\end{lemma}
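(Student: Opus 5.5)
The plan is to reduce to the classical far-field expansion of the logarithmic kernel, applied arc by arc through \Cref{defi:SL}. By \Cref{defi:SL}, $U=\cS\lambda=\half\sum_{i=1}^n \cS_{\widehat{\Gamma}_i}\lambda$, where each $\cS_{\widehat{\Gamma}_i}$ is the single-layer potential of an open arc acting on $\lambda|_{\widehat{\Gamma}_i}\in\widetilde H^{-\half}(\widehat{\Gamma}_i)$. It therefore suffices to expand each $\cS_{\widehat{\Gamma}_i}\lambda(\bx)$ for $\Vert\bx\Vert\to\infty$ and sum. Since the statement is written with the normalisation of the $\widehat{\Gamma}_i$ pairings, the constant in front of $\log\Vert\bx\Vert$ is read up to the factor $-\frac{1}{2\pi}\cdot\half$ coming from $G$ and \Cref{defi:SL}. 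Note that $\half\sum_i\langle\lambda,1\rangle_{\widehat{\Gamma}_i}=\langle\lambda,1\rangle_\Gamma$ by \cref{eq:L2duality}.

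\emph{Step 1 (kernel expansion).} Fix $R$ with $\overline{\Gamma}\subset B_{\bm 0}(R)$, and let $\Vert\bx\Vert>2R$. Write
\[
G(\bx-\by)=G(\bx)+K(\bx,\by),\qquad K(\bx,\by):=-\frac{1}{2\pi}\log\frac{\Vert\bx-\by\Vert}{\Vert\bx\Vert}.
\]
For $\by$ in a fixed neighbourhood $\widetilde{B}$ of $\overline{\Gamma}$, the map $\by\mapsto K(\bx,\by)$ is smooth. By Taylor expansion in $\by/\Vert\bx\Vert$, every $\by$-derivative up to a fixed order is $\cO(\Vert\bx\Vert^{-1})$, uniformly in $\by\in\widetilde{B}$. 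Similarly, $\nabla_{\bx}G(\bx-\by)=\nabla G(\bx)+\cO(\Vert\bx\Vert^{-2})$ in the same smooth norms in $\by$, with $\nabla G(\bx)=\cO(\Vert\bx\Vert^{-1})$.

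\emph{Step 2 (duality bound).} For $\Vert\bx\Vert$ large, $\cS_{\widehat{\Gamma}_i}\lambda(\bx)$ is the pairing $\langle\lambda,G(\bx-\cdot)\rangle_{\widehat{\Gamma}_i}$ of $\widetilde H^{-\half}(\widehat{\Gamma}_i)$ with $H^{\half}(\widehat{\Gamma}_i)$. This identification holds first for $L^2$ densities and then extends by density and continuity, using item 1 of \Cref{prop:SL} and the fact that $\bx$ lies away from $\overline\Gamma$. Hence
\[
\cS_{\widehat{\Gamma}_i}\lambda(\bx)=G(\bx)\langle\lambda,1\rangle_{\widehat{\Gamma}_i}+\langle\lambda,K(\bx,\cdot)\rangle_{\widehat{\Gamma}_i}.
\]
The remainder is bounded by
\[
\Vert\lambda\Vert_{\widetilde H^{-\half}(\widehat{\Gamma}_i)}\,\Vert K(\bx,\cdot)\Vert_{H^{\half}(\widehat{\Gamma}_i)}.
\]
We bound $\Vert K(\bx,\cdot)\Vert_{H^{\half}(\widehat{\Gamma}_i)}$ by the $H^1$ norm of $K(\bx,\cdot)$ on a bounded neighbourhood, using the trace theorem on $\partial\Omega_i$ and \cref{eq:dirTraceArc}; Step 1 makes this $\cO(\Vert\bx\Vert^{-1})$. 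Summing over $i$ with \cref{eq:norms} gives the first claim.

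\emph{Step 3 (zero mean).} If $\langle\lambda,1\rangle_\Gamma=0$, the logarithmic term drops out and $U(\bx)=\half\sum_i\langle\lambda,K(\bx,\cdot)\rangle_{\widehat{\Gamma}_i}=\cO(\Vert\lambda\Vert_{\widetilde H^{-\half}(\Gamma)}\Vert\bx\Vert^{-1})$. This works because the constant $G(\bx)$ can be added back freely. For the gradient, differentiating under the pairing is justified because the kernel is smooth in $\bx$ away from $\overline{\Gamma}$ (item 4 of \Cref{prop:SL}, and the potential is in fact harmonic, hence $C^\infty$ there). This gives
\[
\nabla U(\bx)=\half\sum_i\langle\lambda,\nabla_{\bx}G(\bx-\cdot)-\nabla G(\bx)\rangle_{\widehat{\Gamma}_i},
\]
where subtracting $\nabla G(\bx)$ is again allowed by the zero mean. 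Step 1 then yields $\cO(\Vert\bx\Vert^{-2})$.

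The main obstacle is the subtle point that the zero-mean condition is on the multi-arc pairing $\langle\lambda,1\rangle_\Gamma$. One must check that this is exactly what cancels the sum of constant terms. That holds because the constant function $1$ restricted to each $\widehat{\Gamma}_i$ is the same smooth function, so the cancellation goes through \cref{eq:L2duality}. The other delicate point is justifying the uniform $H^{\half}$ bound of the remainder kernel near junction points. This is handled by bounding through $H^1$ of a neighbourhood via the trace, as in Step 2, rather than working arc-intrinsically.
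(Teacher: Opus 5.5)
Your proposal is correct and follows essentially the paper's route: the paper simply defers to the classical far-field Taylor-expansion argument for closed curves (Steinbach, Lemma 6.21), and you carry out that argument arc by arc through $\cS=\half\sum_i\cS_{\widehat{\Gamma}_i}$, with the zero-mean condition removing the $G(\bx)$ and $\nabla G(\bx)$ terms. You are also right that the first claim only holds up to normalisation: its coefficient of $\log\Vert\bx\Vert$ must read $-\frac{1}{4\pi}\sum_i\langle\lambda,1\rangle_{\widehat{\Gamma}_i}=-\frac{1}{2\pi}\langle\lambda,1\rangle_\Gamma$, with $U(\bx)$ in place of $\vert U(\bx)\vert$.
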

\begin{proof}
    The result follows analogously to that for closed boundaries, which may be found in \cite[Lemma 6.21]{STEI07}.
\end{proof}

With these properties at hand, we find ourselves in the position to prove the ellipticity of the weakly singular operator.

\begin{proposition}
\label{prop:elipV}
  For any $\lambda \in \widetilde{H}^{-\half}(\Gamma)$ satisfying $\langle \lambda, 1\rangle _{\Gamma} = 0$, it holds that
  $$
  \Vert\lambda\Vert^2_{\widetilde{H}^{-\half}(\Gamma)} \lesssim \langle \mathcal{V} \lambda, \lambda \rangle_\Gamma.
  $$
\end{proposition}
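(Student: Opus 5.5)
The plan is the classical energy argument, adapted to the multi-arc setting through the Neumann jump. Set $U=\cS\lambda$ with $\langle\lambda,1\rangle_\Gamma=0$. By \Cref{prop:SL}, $U$ is harmonic in $\IR^2\setminus\overline\Gamma$, belongs to $H^1_{\loc}(\IR^2)$, and is $H^2$ away from $\Gamma$. By \Cref{lemma:lapdecay}, $U=\cO(\Vert\bx\Vert^{-1})$ and $\nabla U=\cO(\Vert\bx\Vert^{-2})$ at infinity, so $U$ itself is an admissible extension of $v=\gamma_D U=\mathcal V\lambda$ in \Cref{prop:NeuJumpExt}.

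\textbf{Step 1: energy identity.} Combining \Cref{lem:Njump} with \Cref{prop:NeuJumpExt} (taking $V=U$), I will write
\begin{align*}
\langle \mathcal V\lambda,\lambda\rangle_\Gamma=-\langle[\gamma_N U],\gamma_D U\rangle_\Gamma=\lim_{r\to\infty}\sum_{i=1}^n\langle\gamma_N^iU,\gamma_D^iU\rangle_{\partial\Omega_i^r}.
\end{align*}
Green's formula on each $\Omega_i^r$, together with $\Delta U=0$, turns the right-hand side into $\lim_{r\to\infty}\int_{B_{\bm 0}(r)}|\nabla U|^2\,\d\bx=|U|^2_{H^1(\IR^2)}$. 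The decay from \Cref{lemma:lapdecay} makes this quantity finite and kills the boundary term on $\partial B_{\bm 0}(r)$.

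\textbf{Step 2: lower bound of $\Vert\lambda\Vert_{\widetilde H^{-\half}(\Gamma)}$ by the energy.} By \Cref{lem:Njump} and \cref{eq:dualunitary}, $\lambda=-[\gamma_N U]$ is controlled by testing against $v\in H^{\half}(\Gamma)$. The term $\langle[\gamma_N U],v\rangle_\Gamma$ is written through Green's formula with $\chi V$, where $V\in H^1(\IR^2)$ is a near-optimal extension of $v$ from \Cref{lem:equiv}. Following the last part of the proof of \Cref{prop:boundNeumanJump}, with $\Delta U=0$, this gives
\begin{align*}
|\langle\lambda,v\rangle_\Gamma|\lesssim \Vert\nabla U\Vert_{L^2(B_{\bm 0}(r_0))}\Vert v\Vert_{H^{\half}(\Gamma)}+\Big|\int U\,\nabla\chi\cdot\nabla V\Big|.
\end{align*}

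\textbf{Step 3: removing the constant mode (the main obstacle).} The remaining term involves $U$ itself rather than $\nabla U$, so it must be removed. The zero-mean condition permits this: since $\langle\lambda,1\rangle_\Gamma=0$, $v$ may be replaced by $v-c$ for any constant $c$. Equivalently, one uses the Neumann-jump pairing with $U-c$ in place of $U$; the constant has zero jump, so the pairing is unchanged. Choosing $c$ as the mean of $U$ on $B_{\bm 0}(r_0)$, Poincaré's inequality bounds $\Vert U-c\Vert_{H^1(B_{\bm 0}(r_0))}\lesssim|U|_{H^1(B_{\bm 0}(r_0))}$. Then \cref{eq:dualunitary} yields
\begin{align*}
\Vert\lambda\Vert_{\widetilde H^{-\half}(\Gamma)}\lesssim|U|_{H^1(\IR^2)},
\end{align*}
and squaring and combining with Step 1 gives the claim.

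The delicate point is the cleanest way to realize this constant subtraction. The alternative is to argue directly that $\Vert\cdot\Vert_{\widetilde H^{-\half}(\Gamma)}$ is dominated by the dual norm of $H^{\half}(\Gamma)$, via $\widetilde H^{-\half}(\Gamma)\subset H^{\half}(\Gamma)^\star$ and \cref{eq:dualunitary}, restricted to the mean-zero subspace. I would check this with the standard closed-curve argument of \cite[Thm.~6.22]{STEI07} applied arcwise.
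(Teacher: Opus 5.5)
Your Steps 1 and 2 follow the paper's argument: the energy identity from \Cref{lem:Njump} and \Cref{prop:NeuJumpExt}, then the Green's-formula estimate from the end of the proof of \Cref{prop:boundNeumanJump} combined with \Cref{lem:equiv}. Step 3, however, addresses a non-issue, and your final inference is where the proof breaks.

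Green's formula with test function $\chi V$ produces $\int\chi\,\nabla U\cdot\nabla V+\int V\,\nabla\chi\cdot\nabla U$. No undifferentiated $U$ appears, so no constant needs to be subtracted. The zero-mean condition is only needed in Step 1, for decay, finite energy, and admissibility of $V=U$ in \Cref{prop:NeuJumpExt}. What Steps 1--2 actually give is
\begin{align*}
\sup_{v\in H^{\half}(\Gamma)}\frac{\abs{\dup{\lambda}{v}_\Gamma}}{\norm{v}_{H^{\half}(\Gamma)}}\lesssim\snorm{\cS\lambda}_{H^1(\IR^2)}=\dup{\mathcal V\lambda}{\lambda}_\Gamma^{\half},
\end{align*}
i.e.\ coercivity in the norm of $(H^{\half}(\Gamma))^\star$. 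Your step ``\cref{eq:dualunitary} yields $\norm{\lambda}_{\widetilde H^{-\half}(\Gamma)}\lesssim\snorm{U}_{H^1(\IR^2)}$'' requires $\lambda$ to lie in the isometric realization of $H^{\half}(\Gamma)^\star$ inside $\widetilde H^{-\half}_\times(\Gamma)$. $\widetilde H^{-\half}(\Gamma)$ is not contained in it, not even up to equivalent norms. Your arcwise check would fail for this reason: a test function in $H^{\half}(\widehat\Gamma_i)$ generally extends to $H^{\half}(\Gamma)$ only with nonzero values on the remaining arc, and those values then enter the pairing. The paper's proof makes the identical identification, $\norm{\lambda}^2_{\widetilde H^{-\half}(\Gamma)}=\norm{[\gamma_N U]}^2_{(H^{\half}(\Gamma))^\star}$. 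At a branch point this identification is false, so no argument can close this step.

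\textbf{Counterexample.} Take three straight segments $\Gamma_1,\Gamma_2,\Gamma_3$ meeting at $\bm 0$. Label them so that $\widehat\Gamma_1=\Gamma_1\cup\Gamma_2$ and $\widehat\Gamma_2=\Gamma_2\cup\Gamma_3$ (junction included), and let $t$ be the distance to $\bm 0$. Let $g(t)=1/(t\abs{\log t})$, and choose $g_k\in\C_0^\infty((0,\half))$ with $0\le g_k\le g$ and $g_k=g$ on $[2/k,1/4]$. Set $\lambda_k=g_k$ on $\Gamma_1$, $-g_k$ on $\Gamma_2$, and $0$ on $\Gamma_3$. Then $\lambda_k\in\cD(\Gamma)$ and $\dup{\lambda_k}{1}_\Gamma=0$. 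Write $v_j(t)$ for the value of $v\in H^{\half}(\Gamma)$ on $\Gamma_j$ at distance $t$. Then
\begin{align*}
\abs{\dup{\lambda_k}{v}_\Gamma}=\Big|\int_0^{\half}g_k\,(v_1-v_2)\,\mathrm{d}t\Big|\le\Big(\int_0^{\half}\frac{\mathrm{d}t}{t\log^2t}\Big)^{\half}\Big(\int_0^{\half}\frac{\abs{v_1-v_2}^2}{t}\,\mathrm{d}t\Big)^{\half}\lesssim\norm{v}_{H^{\half}(\widehat\Gamma_1)},
\end{align*}
by the Hardy-type bound expressing $H^{\half}$-continuity across the corner of $\widehat\Gamma_1$. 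Since $\gamma_D:H^1(\IR^2)\to H^{\half}(\Gamma)$ is bounded, $\gamma_D^\star\lambda_k$ is bounded in $H^{-1}(\IR^2)$ with support in a fixed compact set. Hence $\dup{\mathcal V\lambda_k}{\lambda_k}_\Gamma=\dup{\mathcal N\gamma_D^\star\lambda_k}{\gamma_D^\star\lambda_k}_{\IR^2}$ stays bounded.

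On the other hand, pair $\lambda_k|_{\widehat\Gamma_2}=(-g_k,0)$ with the fixed Lipschitz function $\psi(t)$, where $0\le\psi\le 1$ and $\psi=1$ on $[0,1/4]$. This gives $\norm{\lambda_k}_{\widetilde H^{-\half}(\Gamma)}\ge\norm{\lambda_k|_{\widehat\Gamma_2}}_{\widetilde H^{-\half}(\widehat\Gamma_2)}\gtrsim\int_{2/k}^{1/4}\frac{\mathrm{d}t}{t\abs{\log t}}\sim\log\log k\to\infty$. So the proposition holds only with $\norm{\cdot}_{(H^{\half}(\Gamma))^\star}$ on the left, the setting of the remark after \Cref{thrm:uniquesol}. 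That is exactly what your Steps 1--2 prove.
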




\begin{proof}
  Again. we set $U = \cS\lambda$ and, by \Cref{lem:Njump}, we have that 
  $
  [\gamma_N U ] = -\lambda 
  $. A combination of \Cref{lemma:lapdecay} and \Cref{prop:NeuJumpExt} yields 
  $$
  \langle \lambda , \mathcal{V}\lambda\rangle_{\Gamma} = \langle [\gamma_N U] , \gamma_D U\rangle_{\Gamma}= -
  \lim_{r \rightarrow \infty} \sum_{i= 1}^n \langle \gamma_N^i U , \gamma_D^i U\rangle_{\partial \Omega_i^r}.
  $$
Using Green's formula together with the fact that $U=\cS\lambda$ solves the Laplace equation outside of $\Gamma$, we obtain that
$$
  \langle \lambda , \mathcal{V}\lambda\rangle_{\Gamma} = \lim_{r \rightarrow \infty} \sum_{i=1}^n \vert U\vert_{H^1(\Omega_i^r)}^2=\lim_{r \rightarrow \infty}\vert U\vert_{H^1(B_0(r))}^2.
$$
Notice that, thanks to \Cref{lemma:lapdecay}, the right-hand side of the last equation is finite. Furthermore, by incorporating this condition, and the fact that $U$ is harmonic in $\IR^2\setminus \overline \Gamma$, we can improve the bound of \Cref{prop:boundNeumanJump} so we obtain  
$$\Vert \lambda\Vert^2_{\widetilde{H}^{-\frac{1}{2}}(\Gamma)} =\Vert [\gamma_N U]\Vert^2_{(H^{\frac{1}{2}}(\Gamma))^\star} \lesssim \lim_{r\rightarrow \infty} \sum_{i=1}^n \vert U\vert_{H^1(B_0(r))}^2=\langle \lambda , \mathcal{V}\lambda\rangle_{\Gamma},$$
and we have proved the ellipticity of the weakly singular operator. 
\end{proof}
The result in \Cref{prop:elipV} implies that the weakly singular integral equation must be posed in the space
$$
\widetilde{H}^{-\half}_*(\Gamma): = \{ \lambda \in \widetilde{H}^{-\half}(\Gamma): \langle \lambda, 1\rangle_{\jp{{\Gamma}}} = 0  \}.
$$
Doing so allows us to prove the existence of a unique solution through the Lax-Milgram Theorem. 

\begin{theorem}
\label{thrm:uniquesol}
  Let $f$ be a given element of the dual space of $\widetilde{H}_*^{-\half}(\Gamma)$. Then, there exists a unique solution $\lambda \in \widetilde{H}^{-\half}_*(\Gamma)$ such that
  \begin{align}
  \label{eq:Veq}
       \langle v , \mathcal{V} \lambda \rangle_\Gamma = f(v) \quad \forall v \in \widetilde{H}_*^{-\half}(\Gamma). 
  \end{align}
\end{theorem}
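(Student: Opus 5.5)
The plan is to apply the Lax--Milgram theorem on the Hilbert space $\widetilde{H}^{-\half}_*(\Gamma)$ to the bilinear form
$$a(\lambda,v) \coloneqq \langle v, \mathcal{V}\lambda\rangle_\Gamma, \qquad \lambda, v \in \widetilde{H}^{-\half}_*(\Gamma).$$
The first step is to check that $\widetilde{H}^{-\half}_*(\Gamma)$ is a Hilbert space. It is a closed subspace of $\widetilde{H}^{-\half}(\Gamma)$, and $\widetilde{H}^{-\half}(\Gamma)$ is itself closed in the Hilbert product space $\widetilde{H}^{-\half}_\times(\Gamma)$. Closedness of $\widetilde{H}^{-\half}_*(\Gamma)$ holds because $\lambda\mapsto\langle\lambda,1\rangle_\Gamma$ is continuous on $\widetilde{H}^{-\half}(\Gamma)$. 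Indeed, the constant $1$ lies in $\C^\infty(\Gamma)\subset H^{\half}(\Gamma)$, and by the extended duality pairing discussed after \cref{eq:hsdefi}, $\langle\lambda,1\rangle_\Gamma = \half\sum_i\langle\lambda,1\rangle_{\widehat\Gamma_i}$ is bounded by $\norm{\lambda}_{\widetilde H^{-\half}_\times(\Gamma)}\norm{1}_{H^{\half}_\times(\Gamma)}$.

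The second step is continuity of $a$. The weakly singular operator $\mathcal{V}:\widetilde{H}^{-\half}(\Gamma)\to H^{\half}(\Gamma)$ is bounded. The pairing $\langle\cdot,\cdot\rangle_\Gamma$ is continuous on $\widetilde{H}^{-\half}(\Gamma)\times H^{\half}(\Gamma)$, since $\widetilde{H}^{-\half}(\Gamma)\subset H^{\half}(\Gamma)^\star$ and \cref{eq:dualunitary} holds. Combining the two gives $|a(\lambda,v)|\lesssim\norm{v}_{\widetilde{H}^{-\half}(\Gamma)}\norm{\lambda}_{\widetilde{H}^{-\half}(\Gamma)}$.

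The third step is coercivity, which is exactly \Cref{prop:elipV}: for $\lambda\in\widetilde{H}^{-\half}_*(\Gamma)$ we have $a(\lambda,\lambda)=\langle\lambda,\mathcal{V}\lambda\rangle_\Gamma\gtrsim\norm{\lambda}^2_{\widetilde{H}^{-\half}(\Gamma)}$. Finally, $f$ is by assumption a bounded linear functional on $\widetilde{H}^{-\half}_*(\Gamma)$. Lax--Milgram then gives existence and uniqueness of $\lambda$ satisfying \cref{eq:Veq}, together with the stability bound $\norm{\lambda}\lesssim\norm{f}$.

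Lax--Milgram does not require symmetry, but I would note that $a$ is in fact symmetric. Since $\cS=\half\sum_i\cS_{\widehat\Gamma_i}$ and the Green kernel is symmetric, this would also allow the Riesz representation theorem to be used instead.

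The main subtle point is the continuity step. There one must be sure that the duality pairing $\langle\cdot,\cdot\rangle_\Gamma$, extended as in \cref{eq:dualunitary}, coincides with the one used in \Cref{prop:elipV}. One must also ensure that elements of $\widetilde{H}^{-\half}(\Gamma)$ act boundedly on $H^{\half}(\Gamma)$ through this pairing. This follows from the inclusion $\widetilde{H}^{-\half}(\Gamma)\subset H^{\half}(\Gamma)^\star\subset\widetilde{H}^{-\half}_\times(\Gamma)$ stated earlier, up to the factor $\half$, so I expect no genuine difficulty.
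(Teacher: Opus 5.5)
Your proposal is correct and follows the paper's proof: continuity of the bilinear form from the per-arc duality and the boundedness of $\mathcal{V}$, ellipticity from \Cref{prop:elipV}, and then Lax--Milgram. Your extra checks fill in details the paper leaves implicit. These are that $\widetilde{H}^{-\half}_*(\Gamma)$ is closed, and a genuine two-argument continuity bound, where the paper writes only the diagonal estimate $\langle v,\mathcal{V}v\rangle_\Gamma \lesssim \Vert v\Vert^2$.
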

\begin{proof}
Let $v \in \widetilde{H}^{-\half}(\Gamma)$. Then, we have that
$$
\langle v , \mathcal{V} v\rangle_\Gamma = \half \sum_{i=1}^n \langle v , \gamma_D \cS v \rangle_{\widehat{\Gamma}_i} \lesssim \sum_{i=1}^n \Vert v \Vert_{\widetilde{H}^{-\half}(\widehat{\Gamma}_i)} \Vert \gamma_D \cS v \Vert_{H^{\half}(\widehat{\Gamma}_i)} \lesssim \sum_{i=1}^n \Vert v \Vert^2 _{\widetilde{H}^{-\half}(\widehat{\Gamma}_i)} = \Vert v\Vert_{\widetilde{H}^{-\half}(\Gamma)}^2,
$$
and the bilinear form is continuous and (by \Cref{prop:elipV}) elliptic. The result is then a direct consequence of the Lax-Milgram theorem.
\end{proof}

\begin{remark} Although the analysis above is carried out in $\widetilde{H}^{-\half}(\Gamma)$, all the results extend to $(H^{\half}(\Gamma))^\star$, which would yield equivalent results to those in \cite{Claeys2013}. However, $\widetilde{H}^{-\half}(\Gamma)$ is a more natural choice, as it leads to the classical discrete approximation spaces. \end{remark}

\subsection{Integral Formulation for the Neumann Problem}
\label{sec:NeuProb}

In this section, we derive a boundary integral formulation for the Laplace equation with Neumann boundary conditions on $\Gamma$. To simplify the presentation, we restrict ourselves to the case in which $\Gamma$ is a \emph{triple junction}, that is, it consists of three open arcs meeting at a single, common point (see \Cref{fig:NeumanGeo}). Despite its simplicity, this geometry captures the main analytical difficulties arising in multi-arc configurations, in particular, the presence of branch points and the interaction between traces defined on different open arcs. At the end of the present section, we briefly discuss how the analysis therein may be extended to more general multi-arc geometries, with only minor modifications. 

\begin{figure}[ht]
  \centering
\tikzset{every picture/.style={line width=0.75pt}} 

\begin{tikzpicture}[x=0.75pt,y=0.75pt,yscale=-1,xscale=1]

\draw [color={rgb, 255:red, 208; green, 2; blue, 27 } ,draw opacity=1 ]  (260.5,151.95) .. controls (300.5,121.95) and (199.5,82.95) .. (239.5,52.95) ;
\draw [color={rgb, 255:red, 208; green, 2; blue, 27 } ,draw opacity=1 ]  (260.5,151.95) -- (359.5,206.95) ;
\draw [color={rgb, 255:red, 208; green, 2; blue, 27 } ,draw opacity=1 ]  (205.5,228.95) .. controls (245.5,198.95) and (205.5,159.95) .. (260.5,151.95) ;
\draw [dash pattern={on 0.84pt off 2.51pt}] (239.5,52.95) .. controls (279.5,22.95) and (414.5,103.95) .. (359.5,206.95) ;
\draw [dash pattern={on 0.84pt off 2.51pt}] (205.5,228.95) .. controls (129.5,98.95) and (217.5,50.95) .. (239.5,52.95) ;
\draw [dash pattern={on 0.84pt off 2.51pt}] (205.5,228.95) .. controls (280.5,275.95) and (319.5,236.95) .. (359.5,206.95) ;
\draw [dash pattern={on 4.5pt off 4.5pt}] (137.5,152.5) .. controls (137.5,77.94) and (197.94,17.5) .. (272.5,17.5) .. controls (347.06,17.5) and (407.5,77.94) .. (407.5,152.5) .. controls (407.5,227.06) and (347.06,287.5) .. (272.5,287.5) .. controls (197.94,287.5) and (137.5,227.06) .. (137.5,152.5) -- cycle ;
\draw [color={rgb, 255:red, 0; green, 0; blue, 0 } ,draw opacity=1 ] [dash pattern={on 4.5pt off 4.5pt}] (205.5,228.95) -- (181.5,252.95) ;
\draw [color={rgb, 255:red, 0; green, 0; blue, 0 } ,draw opacity=1 ] [dash pattern={on 4.5pt off 4.5pt}] (359.5,206.95) -- (387.5,221.95) ;
\draw [dash pattern={on 4.5pt off 4.5pt}] (272.5,17.5) -- (239.5,52.95) ;
\draw [color={rgb, 255:red, 74; green, 144; blue, 226 } ,draw opacity=1 ]  (242,158) -- (258.92,174.92) ;
\draw [shift={(260.33,176.33)}, rotate = 225] [color={rgb, 255:red, 74; green, 144; blue, 226 } ,draw opacity=1 ][line width=0.75]  (10.93,-3.29) .. controls (6.95,-1.4) and (3.31,-0.3) .. (0,0) .. controls (3.31,0.3) and (6.95,1.4) .. (10.93,3.29)  ;
\draw [color={rgb, 255:red, 74; green, 144; blue, 226 } ,draw opacity=1 ]  (333,193) -- (343.43,172.25) ;
\draw [shift={(344.33,170.46)}, rotate = 116.69] [color={rgb, 255:red, 74; green, 144; blue, 226 } ,draw opacity=1 ][line width=0.75]  (10.93,-3.29) .. controls (6.95,-1.4) and (3.31,-0.3) .. (0,0) .. controls (3.31,0.3) and (6.95,1.4) .. (10.93,3.29)  ;
\draw [color={rgb, 255:red, 74; green, 144; blue, 226 } ,draw opacity=1 ]  (246.33,96.46) -- (260.92,81.87) ;
\draw [shift={(262.33,80.46)}, rotate = 135] [color={rgb, 255:red, 74; green, 144; blue, 226 } ,draw opacity=1 ][line width=0.75]  (10.93,-3.29) .. controls (6.95,-1.4) and (3.31,-0.3) .. (0,0) .. controls (3.31,0.3) and (6.95,1.4) .. (10.93,3.29)  ;

\draw (206,116.4) node [anchor=north west][inner sep=0.75pt]  {$\Omega _{1}$};
\draw (301,119.4) node [anchor=north west][inner sep=0.75pt]  {$\Omega _{2}$};
\draw (258,190.4) node [anchor=north west][inner sep=0.75pt]  {$\Omega _{3}$};
\draw (209,183.4) node [anchor=north west][inner sep=0.75pt]  {$\Gamma _{1}$};
\draw (302,160.4) node [anchor=north west][inner sep=0.75pt]  {$\Gamma _{2}$};
\draw (233,64.4) node [anchor=north west][inner sep=0.75pt]  {$\Gamma _{3}$};
\draw (148,101.4) node [anchor=north west][inner sep=0.75pt]  {$\Omega _{1}^{R}$};
\draw (357,74.4) node [anchor=north west][inner sep=0.75pt]  {$\Omega _{2}^{R}$};
\draw (264,257.4) node [anchor=north west][inner sep=0.75pt]  {$\Omega _{3}^{R}$};

\end{tikzpicture}
\caption{Graphical representation of a multi-screen $\Gamma$ constructed from $\Gamma_1, \Gamma_2, \Gamma_3$, with a fixed orientation of the normal vectors.}
  \label{fig:NeumanGeo}
\end{figure}

We begin our presentation by giving a notion for the Neumann trace on the multi-arc $\Gamma$.
\begin{definition}
\label{def:neuTrace}
For $U \in H^1_{\text{\text{loc}}}(\IR^2 \setminus \overline \Gamma; \Delta)$, the \emph{Neumann trace} of $U$ on $\Gamma$ is denoted by $\gamma_N U$, and is defined as an element of  ${H}^{-\frac{1}{2}}_\times(\Gamma)$ by the duality relation
$$
\langle \gamma_N U , \mu \rangle_{H^{-\half}_\times(\Gamma),\widetilde{H}^{\half}_\times(\Gamma)} \coloneq \langle \gamma_N^1 U, \mu_1\rangle_{\widehat{\Gamma}_1} - \langle \gamma_N^2 U, \mu_2 \rangle_{\widehat{\Gamma}_2}, 
$$
for all $\mu \in \widetilde {H}^{\half}_\times(\Gamma)$.
\end{definition}
\begin{remark}
Notice that this is not a generalization of the classical normal derivative, but instead for a smooth function $U$ its Neumann trace can be formally identified with the following function
$$
\gamma_N U (\bx) = \begin{cases}
  2 \partial_{\bm{n}}U(\bx), \quad \bx \in \Gamma_3 \\
  \partial_{\bm{n}} U (\bx), \quad \bx \in \Gamma_1 \\
  \partial_{\bm{n}} U(\bx), \quad \bx \in \Gamma_2 
\end{cases},
$$
where $\partial_{\bm{n}} U$ denotes the classical normal derivative.
 At the present time, this is the only operator that determined by the Nuemann traces on the $\widehat{\Gamma}_1, \widehat{\Gamma}_2, \widehat{\Gamma}_3$ (and hence it can be studied on the Sobolev scale previously introduced), that is somehow connected to the standard Neumann trace.  Notice that,  in contrast to closed boundaries, Green's formula can only generalize the notion of the jump of the normal derivative, but not the normal derivative itself.
An alternative approach, based on combining the individual traces $\gamma_N^i U$ with suitable $L^\infty(\Gamma)$ weights correcting the orientation of the normals, encounters additional difficulties since the product of an $H^{-\half}(\Gamma)$ distribution with an $L^\infty(\Gamma)$ function is not well defined in general.
\end{remark}

In what follows, we assume that the Neumann boundary condition is prescribed as
\begin{align*}
  \gamma_N U = f\quad \text{in }H^{-\frac{1}{2}}_\times(\Gamma)
\end{align*}
for some given $f\in H^{-\frac{1}{2}}_\times(\Gamma)$. Moreover, since $\gamma_N U \in  {H}^{-\half}_\times(\Gamma)$, it induces a continuous linear functional on $\widetilde{H}^{\half}(\Gamma)$. Therefore, we identify $\gamma_N U$ with its representative in $(\widetilde{H}^{\half}(\Gamma))^\star$, characterized by
$$
\langle \gamma_N U, \mu \rangle_{\Gamma} \coloneq \langle \gamma_N^1 U, \mu\rangle_{\widehat{\Gamma}_1} - \langle \gamma_N^2 U, \mu \rangle_{\widehat{\Gamma}_2}, \quad \forall \mu \in \widetilde{H}^{\half}(\Gamma).
$$

\paragraph{Double-Layer Potential.} As in the Dirichlet case, we introduce an integral potential to represent solutions. 

\begin{definition}[Double-Layer Potential]
\label{def:doublelayer}
For $\mu \in \widetilde{H}^{\half}(\Gamma)$, the \emph{Double-Layer potential} acting on the density $\mu$ is defined through the double-layer potentials associated with the open arcs $\widehat{\Gamma}_1$ and $\widehat{\Gamma}_2$ as
$$
\cD \mu \coloneq \cD_{\widehat{\Gamma}_1} \mu - \cD_{\widehat{\Gamma}_2} \mu,
$$
where, for $i\in\{1,2\}$, $\mathcal{D}_{\widehat{\Gamma}_i}$ denotes the classical double-layer potential on the open arc $\widehat{\Gamma}_i$, with the normal vectors pointing to the exterior of $\Omega_i$.
\end{definition}
The mapping properties of $\mathcal{D}$ follow directly from those of the classical double-layer potential.
\begin{proposition}
\label{prop:DL}
The double-layer potential $\cD$ on $\Gamma$ satisfies the following properties:
  \begin{enumerate}
  \item The double-layer potential admits the following representation 
  $$
  \cD = \mathcal{N}\circ \gamma_N^{1,\star} -\mathcal{N}\circ \gamma_N^{2,\star}, 
  $$
  where, for $i\in\{1,2\}$, $\gamma_N^{i,\star}$ denotes the adjoint of the Neumann trace $\gamma_N^i$ restricted to $\widehat{\Gamma}_i$. 
  \item $\Delta \cD \mu = 0 \quad \text{on } \IR^2 \setminus \overline{\Gamma}$. 
  \item For any $\chi\in\C_0^\infty(\IR^2)$,
  $$ \chi \cD : \widetilde{H}^{\half}(\Gamma) \rightarrow H^1(\IR^2 \setminus \overline{\Gamma})$$
   is a bounded and linear operator. 
   \item $\mathcal{D}\mu\in H^2(\omega)$ for any bounded open set
  $\omega\subset\IR^2\setminus\overline{\Gamma}$.
\end{enumerate}
\end{proposition}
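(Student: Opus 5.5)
The plan is to reduce every item to the corresponding classical statement for the double-layer potential on an open arc, as in the proof of \Cref{prop:SL}. By \Cref{def:doublelayer}, $\cD\mu$ is the difference of the two open-arc potentials $\cD_{\widehat{\Gamma}_1}\mu$ and $\cD_{\widehat{\Gamma}_2}\mu$. The first step is to check that the density is admissible for each of them. For $\mu\in\widetilde{H}^{\half}(\Gamma)$, the restrictions $\mu|_{\widehat{\Gamma}_i}$ lie in $\widetilde{H}^{\half}(\widehat{\Gamma}_i)$, with $\norm{\mu|_{\widehat{\Gamma}_i}}_{\widetilde{H}^{\half}(\widehat{\Gamma}_i)}\le\norm{\mu}_{\widetilde{H}^{\half}(\Gamma)}$. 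This holds by the norm identity following \cref{eq:hsdefi} and by density of $\mathcal{D}(\Gamma)$. Consequently the zero extension $\tilde\mu_i$ belongs to $H^{\half}(\partial\Omega_i)$, which is exactly the setting of the closed-boundary theory in \cite[Chap.~6]{mclean2000strongly}, together with its open-arc version in \cite[Sec.~3.5.3]{Sauter:2011}. Every bound below will be linear in these restrictions, hence linear in $\norm{\mu}_{\widetilde{H}^{\half}(\Gamma)}$.

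For item 1, I use that for a Lipschitz closed curve $\partial\Omega_i$ the double-layer potential equals $\mathcal{N}\circ\gamma_N^{i,\star}$. Here $\gamma_N^{i,\star}\tilde\mu_i$ is the distribution $V\mapsto\langle\gamma_N^iV,\tilde\mu_i\rangle_{\partial\Omega_i}$ for $V\in\C_0^\infty(\IR^2)$. Since $\tilde\mu_i$ vanishes off $\widehat{\Gamma}_i$, this pairing only sees $\widehat{\Gamma}_i$, so the adjoint may be taken with respect to the trace restricted to $\widehat{\Gamma}_i$. Subtracting the cases $i=1$ and $i=2$ then gives the stated formula. Items 2 and 4 follow from the kernel representation $\cD_{\widehat{\Gamma}_i}\mu(\bx)=\int_{\widehat{\Gamma}_i}\partial_{\bm n_{\by}}G(\bx-\by)\mu(\by)\,\d\by$. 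For $\bx$ off $\overline{\widehat{\Gamma}_i}\supset$ the support, the kernel is smooth and harmonic in $\bx$. Hence $\cD\mu$ is $C^\infty$ and harmonic on $\IR^2\setminus\overline{\Gamma}$ (note $\overline{\widehat\Gamma_1}\cup\overline{\widehat\Gamma_2}\subseteq\overline\Gamma$). Smoothness up to the closure of a bounded $\omega\subset\IR^2\setminus\overline\Gamma$ gives $H^2(\omega)$; here I read $\omega$ as having positive distance from $\overline\Gamma$, which is the intended meaning.

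Item 3 is the delicate part. The classical result for a closed Lipschitz curve gives $\chi\cD_{\partial\Omega_i}:H^{\half}(\partial\Omega_i)\to H^1(\Omega_i)\times H^1(\IR^2\setminus\overline{\Omega_i})$, bounded on each side. That is, $\chi\cD_{\widehat\Gamma_i}\mu$ is $H^1$ on each side of $\partial\Omega_i$, with a Dirichlet jump equal to $\tilde\mu_i$. Because $\tilde\mu_i$ is supported in $\overline{\widehat{\Gamma}_i}$, the jump across $\partial\Omega_i\setminus\overline{\widehat\Gamma_i}$ vanishes. So $\chi\cD_{\widehat\Gamma_i}\mu\in H^1(\IR^2\setminus\overline{\widehat\Gamma_i})\subset H^1(\IR^2\setminus\overline\Gamma)$: I glue the two one-sided $H^1$ functions across the part of $\partial\Omega_i$ where their traces agree, which is the same gluing used in the proof of \Cref{lem:equiv}. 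The required bound is
\[
\norm{\chi\cD_{\widehat\Gamma_i}\mu}_{H^1(\IR^2\setminus\overline\Gamma)}\lesssim\norm{\tilde\mu_i}_{H^{\half}(\partial\Omega_i)},
\]
and the right-hand side is controlled by $\norm{\mu}_{\widetilde{H}^{\half}(\Gamma)}$. Taking the difference of the cases $i=1$ and $i=2$ and using linearity concludes. I expect the only real care to be needed in verifying this gluing step at the junction point. If it causes trouble, I would instead work directly in the open-arc setting of \cite[Sec.~3.5.3]{Sauter:2011}, where $\widetilde{H}^{\half}$ densities are treated explicitly.
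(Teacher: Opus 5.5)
Your proposal is correct and follows essentially the same route as the paper, whose proof simply cites the closed-boundary theory in \cite[Chap.~6]{mclean2000strongly} and its open-arc version in \cite[Sec.~3.5.3]{Sauter:2011}, and transfers it to $\cD=\cD_{\widehat{\Gamma}_1}-\cD_{\widehat{\Gamma}_2}$ via \Cref{def:doublelayer}. Your extra details merely make explicit what the paper leaves implicit: the zero extension to $H^{\half}(\partial\Omega_i)$; the gluing across the part of $\partial\Omega_i$ where the Dirichlet jump vanishes, which only has to be done inside $\IR^2\setminus\overline{\Gamma}$, so the junction point never enters; and reading item 4 for $\omega$ at positive distance from $\overline{\Gamma}$, which is genuinely needed.
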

\begin{proof}
We again refer to \cite[Chap.~6]{mclean2000strongly} for the corresponding results for the case of closed boundaries. The extension to open arcs is also presented in \cite[Sec.~3.5.3]{Sauter:2011}. We then obtain the results for multi-arcs directly from the definition of the double-layer potential. 
\end{proof}

\paragraph{Integral Equation and Hypersingular Operator.} In an analogous manner as for the Dirichlet problem, we seek a solution for the Neumann problem of the form
$$
U = -\mathcal{D} \mu,
$$
where $\mu\in\widetilde{H}^{\half}(\Gamma)$ is an unknown density. Taking the Neumann trace of the previous equation leads to the following boundary integral equation,
$$
- \gamma_N \mathcal{D} \mu = -f,
$$
which motivates the definition of the \emph{hypersingular} operator as
$$
\mathcal{W} \mu \coloneq -\gamma_N \mathcal{D} \mu. 
$$
\begin{proposition}
The hypersingular operator
$$
\mathcal{W} : \widetilde{H}^{\half}(\Gamma) \rightarrow \widetilde{H}^{\half}(\Gamma)^\star,  
$$
is linear and bounded on $\widetilde{H}^{\half}(\Gamma)$. 
\end{proposition}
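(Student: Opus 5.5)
Linearity is immediate: $\mathcal{W}=-\gamma_N\circ\cD$ composes two linear maps, since $\cD$ is linear by \Cref{def:doublelayer} and $\gamma_N$ is linear by \Cref{def:neuTrace}. The substance is boundedness, which we get by chaining the mapping properties of $\cD$ in \Cref{prop:DL} with continuity of the arc-wise Neumann traces $\gamma_N^i$ on $H^1(\Omega_i^r;\Delta)$.

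\textbf{Step 1 (localise).} Fix $\mu\in\widetilde{H}^{\half}(\Gamma)$ and some $r\in\IR_\Gamma$. Pick a cutoff $\chi\in\C_0^\infty(\IR^2)$ with $\chi\equiv1$ on a neighbourhood of $\overline{B_{\bm 0}(r)}$. The traces $\gamma_N^1,\gamma_N^2$ on $\widehat\Gamma_1,\widehat\Gamma_2$ only see $U=\cD\mu$ inside $\Omega_1\subset\Omega_1^r$ and $\Omega_2\subset\Omega_2^r$, where $U=\chi U$. Item~3 of \Cref{prop:DL} then gives, for $i=1,2$,
$$\norm{U}_{H^1(\Omega_i^r)}\lesssim\norm{\mu}_{\widetilde H^{\half}(\Gamma)}.$$
Item~2 gives $\Delta U=0$ in $\Omega_i^r$. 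Hence $U|_{\Omega_i^r}\in H^1(\Omega_i^r;\Delta)$ with graph norm bounded by $\norm{\mu}_{\widetilde H^{\half}(\Gamma)}$.

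\textbf{Step 2 (Neumann traces on Lipschitz domains).} Each $\Omega_i^r$ is a bounded Lipschitz domain. By \cite[Chap.~4]{mclean2000strongly}, $\gamma_N^i: H^1(\Omega_i^r;\Delta)\to H^{-\half}(\partial\Omega_i^r)$ is bounded, with
$$\norm{\gamma_N^iU}_{H^{-\half}(\partial\Omega_i^r)}\lesssim\norm{U}_{H^1(\Omega_i^r)}+\norm{\Delta U}_{L^2(\Omega_i^r)}.$$
Restricting to $\widehat\Gamma_i$ is a bounded map $H^{-\half}(\partial\Omega_i^r)\to H^{-\half}(\widehat\Gamma_i)$. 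Combining with Step 1 gives $\norm{\gamma_N^iU}_{H^{-\half}(\widehat\Gamma_i)}\lesssim\norm{\mu}_{\widetilde H^{\half}(\Gamma)}$ for $i=1,2$.

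\textbf{Step 3 (assemble the duality bound).} For $\nu\in\widetilde H^{\half}(\Gamma)$, apply \Cref{def:neuTrace} and the duality $H^{-\half}(\widehat\Gamma_i)\cong\widetilde H^{\half}(\widehat\Gamma_i)^\star$:
$$|\langle\mathcal W\mu,\nu\rangle_\Gamma|\le\sum_{i=1}^2\norm{\gamma_N^iU}_{H^{-\half}(\widehat\Gamma_i)}\norm{\nu}_{\widetilde H^{\half}(\widehat\Gamma_i)}\lesssim\norm{\mu}_{\widetilde H^{\half}(\Gamma)}\norm{\nu}_{\widetilde H^{\half}(\Gamma)},$$
where the last step uses the norm in \cref{eq:norms}. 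Taking the supremum over $\nu$ yields boundedness of $\mathcal W:\widetilde H^{\half}(\Gamma)\to\widetilde H^{\half}(\Gamma)^\star$.

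\textbf{Main obstacle.} The delicate point is Step~1. \Cref{prop:DL} gives $H^1$ bounds only on $\IR^2\setminus\overline\Gamma$, where $\cD\mu$ may jump across $\Gamma$. We need $H^1$ bounds on each side domain $\Omega_i^r$ separately. This follows because each $\Omega_i^r$ lies in $\IR^2\setminus\overline\Gamma$ and the broken $H^1$ norm dominates each piece.

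We should also check that the $H^{-\half}(\partial\Omega_i^r)$ trace pairs correctly against $\nu|_{\widehat\Gamma_i}$. This holds because $\nu\in\widetilde H^{\half}(\widehat\Gamma_i)$, so its zero extension lies in $H^{\half}(\partial\Omega_i^r)$.

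If the side-wise $H^1$ bound in Step~1 turned out to be unavailable, the fallback is to work arc by arc. We would use the classical open-arc double-layer bounds of \cite[Sec.~3.5.3]{Sauter:2011} for $\cD_{\widehat\Gamma_j}$ applied to $\mu|_{\widehat\Gamma_j}\in\widetilde H^{\half}(\widehat\Gamma_j)$, and then sum.
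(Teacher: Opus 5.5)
Your proof is correct and follows the paper's route. The paper simply combines the mapping properties of $\cD$ from \Cref{prop:DL} (boundedness into $H^1(\IR^2\setminus\overline{\Gamma};\Delta)$ after localization) with the boundedness of the Neumann trace on that space. You spell out the steps the paper leaves implicit: restricting to the Lipschitz subdomains $\Omega_i^r\subset\IR^2\setminus\overline{\Gamma}$, applying the classical Neumann trace bound there, and assembling the duality estimate via \cref{eq:norms}.
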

\begin{proof}
From \Cref{prop:DL} we have that $\mathcal{D} : \widetilde{H}^{\frac{1}{2}}(\Gamma) \rightarrow H^1(\IR^2 \setminus \overline{\Gamma}; \Delta)$ is a bounded operator. The result then follows directly from the boundedness of the Neumann trace acting in $H^1(\IR^2 \setminus \overline{\Gamma};\Delta)$.
 \end{proof}

\begin{lemma}
\label{lemma:DLdecay}
Let $\mu \in \widetilde{H}^{\half}(\Gamma)$ and set $U =\mathcal{D} \mu$. Then, it holds that
\begin{align}
\vert U(\bx)\vert \lesssim \Vert\mu\Vert_{\widetilde{H}^{\half}(\Gamma)}\Vert\bx\Vert^{-1} \quad \text{and} \quad \Vert\nabla U(\bx)\Vert \lesssim \Vert\mu\Vert_{\widetilde{H}^{\half}(\Gamma)} \Vert\bx\Vert^{-2}, \quad \text{as } \Vert\bx\Vert \rightarrow \infty. 
\end{align}
\end{lemma}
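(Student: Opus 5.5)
The plan is to reduce the estimate to the classical decay of the double-layer potential on a single open arc, using \Cref{def:doublelayer}. Since $U=\mathcal{D}\mu = \mathcal{D}_{\widehat{\Gamma}_1}\mu - \mathcal{D}_{\widehat{\Gamma}_2}\mu$, it suffices to show, for $i\in\{1,2\}$ and every $\bx$ with $\Vert\bx\Vert\ge R_0$ (where $R_0 \ge 2\sup_{\by\in\Gamma}\Vert\by\Vert$), that
\[
\vert \mathcal{D}_{\widehat{\Gamma}_i}\mu(\bx)\vert \lesssim \Vert \mu\Vert_{\widetilde{H}^{\half}(\widehat{\Gamma}_i)}\Vert\bx\Vert^{-1},\qquad \Vert\nabla \mathcal{D}_{\widehat{\Gamma}_i}\mu(\bx)\Vert \lesssim \Vert \mu\Vert_{\widetilde{H}^{\half}(\widehat{\Gamma}_i)}\Vert\bx\Vert^{-2}.
\]
The claim then follows from the triangle inequality and from $\Vert \mu\Vert_{\widetilde{H}^{\half}(\widehat{\Gamma}_i)}\le \Vert\mu\Vert_{\widetilde{H}^{\half}(\Gamma)}$, which is immediate from \cref{eq:norms}.

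For a fixed arc, I would write the potential as a duality pairing,
\[
\mathcal{D}_{\widehat{\Gamma}_i}\mu(\bx)=\langle \mu, \bm{n}_i(\cdot)\cdot\nabla_{\by}G(\bx-\cdot)\rangle_{\widehat{\Gamma}_i}.
\]
This is legitimate because $\mu\in\widetilde{H}^{\half}(\widehat{\Gamma}_i)\subset L^2(\widehat{\Gamma}_i)$ and the kernel is smooth in $\by$ for $\bx$ away from $\overline{\Gamma}$. The point to exploit is that $\nabla_{\by} G(\bx-\by) = \frac{1}{2\pi}\frac{\bx-\by}{\Vert\bx-\by\Vert^2}$, so for $\Vert \bx\Vert\ge R_0$ and $\by\in\overline\Gamma$ we have $\Vert\bx-\by\Vert\ge \Vert\bx\Vert/2$. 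Hence the kernel is bounded pointwise by $\Vert\bx\Vert^{-1}$, and its $\bx$-gradient by $\Vert\bx\Vert^{-2}$. Unlike the single layer, no vanishing-mean condition is needed: the kernel already carries a derivative of $G$, so no $\log$ term appears.

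To control the pairing, use Cauchy–Schwarz in $L^2(\widehat{\Gamma}_i)$:
\[
\vert \mathcal{D}_{\widehat{\Gamma}_i}\mu(\bx)\vert\le \Vert\mu\Vert_{L^2(\widehat{\Gamma}_i)}\,\vert\widehat{\Gamma}_i\vert^{\half}\,\sup_{\by}\vert\bm{n}_i\cdot\nabla_{\by}G(\bx-\by)\vert \lesssim \Vert\mu\Vert_{\widetilde{H}^{\half}(\widehat{\Gamma}_i)}\Vert\bx\Vert^{-1},
\]
with $\Vert\mu\Vert_{L^2}\lesssim\Vert\mu\Vert_{\widetilde{H}^{\half}}$. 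This requires only that the normal be bounded, which holds since the arcs are at least Lipschitz, so $\bm{n}_i\in L^\infty$. For the gradient, I would differentiate under the pairing, justified by smoothness of the kernel in $\bx$ on $\{\Vert\bx\Vert>R_0\}$ uniformly in $\by\in\overline\Gamma$. The second derivatives satisfy $\vert\partial_{x_k}\partial_{y_l}G(\bx-\by)\vert\lesssim\Vert\bx-\by\Vert^{-2}\lesssim \Vert\bx\Vert^{-2}$, and the same Cauchy–Schwarz argument gives the second bound.

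The main (mild) obstacle is justifying that the abstract double-layer potential of \Cref{prop:DL}, defined through $\mathcal{N}\circ\gamma_N^{i,\star}$, agrees with the integral representation at points far from $\Gamma$. I would settle this by density: for $\mu\in\mathcal{D}(\Gamma)$ the two coincide classically. Both sides are continuous in $\mu$ pointwise for fixed $\bx\notin\overline{\Gamma}$: the left side by the local $H^2$ regularity in \Cref{prop:DL} together with Sobolev embedding on a compact neighbourhood of $\bx$, and the right side by the Cauchy–Schwarz bound above. Passing to the limit then yields the estimate for all $\mu\in\widetilde{H}^{\half}(\Gamma)$.
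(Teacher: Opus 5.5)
Your proposal is correct and follows the same route as the paper, whose proof just invokes the integral representation of the double-layer potential and cites \cite[Thm.~6.24]{STEI07}; you make the omitted details explicit: the reduction via \cref{eq:norms} to the arcs $\widehat{\Gamma}_1,\widehat{\Gamma}_2$, the bound $\Vert\bx-\by\Vert\ge\Vert\bx\Vert/2$ on the kernel and its $\bx$-gradient, and the Cauchy--Schwarz step. One small point: \Cref{prop:DL} asserts $H^2(\omega)$ regularity only qualitatively, so the pointwise continuity in $\mu$ needed for your density step is better obtained from harmonicity (mean value property) together with the $H^1$ bound in item 3 of \Cref{prop:DL}; alternatively, note that the open-arc double layer is defined by its kernel away from $\overline{\Gamma}$, which makes that step unnecessary.
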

\begin{proof}
These properties follow directly from the integral representation of the double-layer potential (see \cite[Thm.~6.24]{STEI07}).
\end{proof}

\begin{proposition}
The hypersingular operator is injective and positive semi-definite, i.e.,
$$
\langle \mathcal{W} \mu , \mu \rangle_\Gamma \ge 0, \quad \forall \mu \in \widetilde{H}^{\half}(\Gamma).
$$
\end{proposition}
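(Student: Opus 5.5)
Set $U = \mathcal{D}\mu$ for $\mu \in \widetilde{H}^{\half}(\Gamma)$. The plan is to show that $\langle \mathcal{W}\mu,\mu\rangle_\Gamma$ equals the Dirichlet energy of $U$ on $\IR^2\setminus\overline\Gamma$, in the same way as \Cref{prop:elipV} does for $\mathcal{V}$.

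\textbf{Step 1: jump relations.} By \Cref{prop:DL} and the classical jump relations on the open arcs $\widehat\Gamma_1,\widehat\Gamma_2$, the Neumann traces of $\cD_{\widehat\Gamma_i}\mu$ are continuous across $\widehat\Gamma_i$ and the Dirichlet traces jump by $\mu$. In the triple-junction geometry of \Cref{fig:NeumanGeo}, with the orientation $\cD_{\widehat\Gamma_1}-\cD_{\widehat\Gamma_2}$, the arc $\Gamma_3$ (shared by $\widehat\Gamma_1$ and $\widehat\Gamma_2$) receives jump contributions from both potentials, with opposite normals and hence equal sign. I expect the following:
\begin{itemize}
\item the jump of $\gamma_D^iU$ across $\Gamma$ reproduces $\mu$ in the same weighted way that $\gamma_N$ is assembled in \Cref{def:neuTrace};
\item the normal derivatives of $U$ from the two sides of each arc agree, up to the sign conventions of the $\bm n_i$.
\end{itemize}

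\textbf{Step 2: Green's formula.} Apply Green's formula on each $\Omega_i^r$, using $\Delta U=0$ off $\overline\Gamma$:
\[
\sum_{i=1}^3 \vert U\vert^2_{H^1(\Omega_i^r)}=\sum_{i=1}^3\langle \gamma_N^iU,\gamma_D^iU\rangle_{\partial\Omega_i^r}.
\]
Contributions on $\partial\Omega_i^r\setminus\overline\Gamma$ cancel in pairs, since $U\in H^2$ away from $\Gamma$. The part on $\partial B_{\bm 0}(r)$ is $\cO(r^{-2})$ by \Cref{lemma:DLdecay}, exactly as in \Cref{prop:NeuJumpExt}. Rewriting the remaining terms on $\Gamma$ with the relations of Step 1 should give
\[
\langle\gamma_NU,\mu\rangle_\Gamma=\langle \gamma_N^1U,\mu\rangle_{\widehat\Gamma_1}-\langle\gamma_N^2U,\mu\rangle_{\widehat\Gamma_2},
\]
up to sign. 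Letting $r\to\infty$ then yields
\[
\langle\mathcal{W}\mu,\mu\rangle_\Gamma=-\langle\gamma_N\cD\mu,\mu\rangle_\Gamma=\vert U\vert^2_{H^1(\IR^2\setminus\overline\Gamma)}\ge0 .
\]

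\textbf{Main obstacle.} The difficulty lies in Steps 1 and 2. The traces only live in $H^{\pm\half}$ on each $\widehat\Gamma_i$, so splitting the pairings of $\gamma_N^iU$ with the one-sided traces $\gamma_D^iU$ and regrouping them arc by arc at the branch point needs justification. I would do this first for $\mu\in\mathcal D(\Gamma)$. Then $U$ is smooth up to each arc away from the junction, and there is no contribution at the junction point because $\mu$ vanishes near it. The identity then extends by density of $\mathcal D(\Gamma)$ in $\widetilde H^{\half}(\Gamma)$ and continuity of $\mathcal{W}$. Part of this step is checking that the weights of \Cref{def:neuTrace} (the factor $2$ on $\Gamma_3$) match the double jump on $\Gamma_3$ produced by $\cD$; this is exactly why the sign choice $\cD_{\widehat\Gamma_1}-\cD_{\widehat\Gamma_2}$ is made.

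\textbf{Injectivity.} Suppose $\mathcal W\mu=0$. Then $\vert U\vert_{H^1}=0$, so $U$ is constant on each connected component of $\IR^2\setminus\overline\Gamma$. The unbounded component touches all arcs of $\overline\Gamma$ near their free endpoints, and $U\to0$ at infinity by \Cref{lemma:DLdecay}, so $U=0$ there. Every bounded component borders the unbounded one across some arc where $\mu$ vanishes near the free end. Since $\mu\in\widetilde H^{\half}$ forces vanishing at the endpoints, a constant jump there must be zero, so the constants agree. Hence $U\equiv0$, all Dirichlet jumps vanish, and by Step 1 $\mu=0$.
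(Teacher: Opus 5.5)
Your route (classical jump relations on the open arcs, Green's formula on the $\Omega_i^r$, then density) differs from the paper's, and its bookkeeping is right. Take $\cD_{\widehat\Gamma_i}\mu$ to jump by $+\mu$ in the direction of $\bm n_i$. Then $U=\cD\mu$ jumps by $\mu$ across $\Gamma_1$ and $\Gamma_2$ and by $2\mu$ across $\Gamma_3$ (with the orientations induced by $\bm n_1,\bm n_2$). The $\Gamma$-part of $\sum_i\langle\gamma_N^iU,\gamma_D^iU\rangle_{\partial\Omega_i^r}$ is then $-(\langle\gamma_N^1U,\mu\rangle_{\widehat\Gamma_1}-\langle\gamma_N^2U,\mu\rangle_{\widehat\Gamma_2})$. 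So the sign you leave open, which is the entire content of semi-definiteness, does come out as $\langle\mathcal W\mu,\mu\rangle_\Gamma=|U|^2$.

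The genuine gap is at the branch point, exactly where you place the main obstacle. You dispose of it by asserting that $\mu\in\mathcal D(\Gamma)$ vanishes near the junction, and this is false. $\mathcal D(\Gamma)$ only requires $\supp(\mu|_{\widehat\Gamma_i})\Subset\widehat\Gamma_i$, and the branch point $\bx_0$ is an interior point of every $\widehat\Gamma_i$ (e.g.\ $\widehat\Gamma_1=\Gamma_1\cup\{\bx_0\}\cup\Gamma_3$). So a generic element of $\mathcal D(\Gamma)$ is the restriction of a smooth function with $\mu(\bx_0)\neq0$. For such $\mu$, each $\cD_{\widehat\Gamma_i}\mu$ is a double layer on an arc with a corner at $\bx_0$, and $U$ is not smooth up to the arcs there. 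Regrouping the pairings arc by arc would then require pairing the $H^{-\half}$ restrictions of $\gamma_N^iU$ to $\Gamma_1,\Gamma_3$ with restrictions of $\gamma_D^iU$ that lie in $H^{\half}$ but not in $\widetilde H^{\half}$ of those sub-arcs, and that pairing is not defined. So the junction is dismissed, not handled.

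The missing idea is a finer density statement, which holds precisely because the index is $\half$. The elements of $\mathcal D(\Gamma)$ vanishing near $\bx_0$ are still dense in $\widetilde H^{\half}(\Gamma)$, since a point has zero $H^1(\IR^2)$-capacity. Concretely, let $\Psi_\epsilon(\bx)=\eta(\log\Vert\bx-\bx_0\Vert/\log\epsilon)$ with $\eta$ smooth, $\eta=0$ on $(-\infty,1]$ and $\eta=1$ on $[2,\infty)$; then $\Vert\Psi_\epsilon\Vert^2_{H^1(\IR^2)}\lesssim 1/\log(1/\epsilon)$. Write $\mu=M|_\Gamma$ with $M\in\C_0^\infty(\IR^2)$, so $M\Psi_\epsilon\to0$ in $H^1(\IR^2)$. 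For small $\epsilon$ its trace on $\partial\Omega_i$ is the zero extension of $\mu\Psi_\epsilon|_{\widehat\Gamma_i}$, hence $\mu(1-\Psi_\epsilon)\to\mu$ in $\widetilde H^{\half}(\Gamma)$. This fails for $s>\half$. On that class your classical computation is legitimate.

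In the limit, continuity of $\mathcal W$ gives $\langle\mathcal W\mu,\mu\rangle_\Gamma\ge0$. For injectivity, however, you also need for general $\mu$ at least $|\cD\mu|^2_{H^1(B_{\bm 0}(R)\setminus\overline\Gamma)}\le\langle\mathcal W\mu,\mu\rangle_\Gamma$ for every $R$. This follows from the local convergence provided by \Cref{prop:DL}(3). The jump relations you use at the end extend to general $\mu$ by continuity of the one-sided traces. Your discussion of bounded components is vacuous, since the complement of a triple junction is connected.

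The paper avoids the density step altogether. It only ever pairs over whole closed curves $\partial\Omega_i^r$ and whole arcs $\widehat\Gamma_i$, and obtains the weak Dirichlet-jump identity \cref{eq:DirJumpD} and $[\gamma_NU]=0$ from the Newton-potential representation and the auxiliary harmonic extensions $Z_i$ of $\widetilde\mu$. The branch point therefore never has to be isolated. Injectivity follows because, once $U=0$, $Z$ reduces to a sum of single layers, which cannot jump.
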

\begin{proof}
We begin by showing that the hypersingular operator is positive semi-definite. Let $\mu \in \widetilde{H}^{\half}(\Gamma)$ and set $U = -\mathcal{D} \mu$.
Then, for any $V \in \C_0^\infty(\IR^2)$, we have that
$$
\langle \Delta U, V\rangle_{\IR^2} = \langle -\Delta \mathcal{N}(\gamma_N^{1,\star}(\mu))-\Delta\mathcal{N}(\gamma_N^{2,\star}(\mu)) , V \rangle_{\IR^2} = \langle \gamma_N^1 V , \mu \rangle_{\widehat{\Gamma}_1} - \langle \gamma_N^2 V , \mu \rangle_{\widehat{\Gamma}_2} = \langle \gamma_N V , \mu \rangle_{\Gamma}.
$$
Furthermore, arguing as in the proof of \Cref{lem:Njump}, we have that
$$
\langle \Delta U , V\rangle_{\IR^2} = \lim_{r \rightarrow \infty} \sum_{i=1}^3 \langle \gamma_N^i V, \gamma_D^i U \rangle_{\partial \Omega_i^r} - \langle \gamma_N^i U, \gamma_D^i V \rangle_{\partial \Omega_i^r}, 
$$
from where we deduce that
\begin{align}
\nonumber
-\lim_{r \rightarrow \infty} \sum_{i=1}^3 \langle \gamma_N^i U, \gamma_D^i V \rangle_{\partial \Omega_i^r}
&=\langle \Delta U , V\rangle_{\IR^2} -\lim_{r \rightarrow \infty} \sum_{i=1}^3 \langle \gamma_N^i V, \gamma_D^i U \rangle_{\partial \Omega_i^r}\\
\label{eq:jumpsDL}
&=\langle \gamma_N V , \mu \rangle_{\Gamma} - \lim_{r \rightarrow \infty} \sum_{i=1}^3 \langle \gamma_N^i V, \gamma_D^i U \rangle_{\partial \Omega_i^r}.
\end{align}
Now, for some fixed $r_0 \in \IR_\Gamma$ and for $i\in\{1,2\}$, we introduce
$Z_i$ as the unique solution of the following problem:
\begin{align*}
  -\Delta Z_i &= 0 \quad \text{in } \Omega_i^{r_0}, \\
  \gamma_D^i Z_i &= \widetilde{\mu} \quad \text{on } \partial \Omega_i^{r_0},
\end{align*}
where $\widetilde{\mu}$ is the extension by $0$ of $\mu$ to each $\partial\Omega_i^{r_0}$. Then, we define $Z\in L^2(B_0(r_0))$ as 
$$
Z(\bx) := \begin{cases}
   Z_1(\bx), \quad &\text{for } \bx \in \Omega_1^{r_0} \\
   -Z_2(\bx), \quad &\text{for } \bx \in \Omega_2^{r_0} \\
   0, \quad &\text{for } \bx \in \Omega_3^{r_0}
\end{cases}.
$$
For $\bx \in \cup_{i=1}^3 \partial \Omega_i^{r_0}$, the classical representation formula (\cite[Thm.~3.1.6]{Sauter:2011}) yields
\begin{align}
\label{eq:Zrep}
  Z = \cS_{\partial \Omega_1^{r_0}} \gamma_N^1 Z_1 - \cS_{\partial \Omega_2^{r_0}} \gamma_N^2 Z_2- \mathcal{D} \mu,
\end{align}
and taking traces of $Z_i$ over $\partial \Omega_i^{r_0}$, for each $i\in\{1,2,3\}$, we obtain 
\begin{align*}
  \widetilde{\mu} = \gamma_D^1 \cS_{\partial \Omega_1^{r_0}} \gamma_N^1 Z_1- \gamma_D^1 \cS_{\partial \Omega_2^{r_0}} \gamma_N^2 Z_2- \gamma_D^1 \mathcal{D} \mu \quad\text{in }\partial\Omega_1^{r_0}, \\
    -\widetilde{\mu} = \gamma_D^2 \cS_{\partial \Omega_1^{r_0}} \gamma_N^1 Z_1- \gamma_D^2 \cS_{\partial \Omega_2^{r_0}} \gamma_N^2 Z_2- \gamma_D^2 \mathcal{D} \mu \quad\text{in }\partial\Omega_2^{r_0}, \\
    0 = \gamma_D^3 \cS_{\partial \Omega_1^{r_0}} \gamma_N^1 Z_1- \gamma_D^3 \cS_{\partial \Omega_2^{r_0}} \gamma_N^2 Z_2- \gamma_D^3 \mathcal{D} \mu \quad\text{in }\partial\Omega_3^{r_0},
\end{align*}
so that 
\begin{align}
  \langle \gamma_N V, \mu  \rangle_{\Gamma} &= \langle \gamma_N^1 V, \mu  \rangle_{\widehat\Gamma_1} -\langle \gamma_N^2 V, \mu  \rangle_{\widehat\Gamma_2}
  =\langle \gamma_N^1 V, \widetilde\mu  \rangle_{\partial\Omega^{r_0}_1} -\langle \gamma_N^2 V, \widetilde\mu  \rangle_{\partial\Omega^{r_0}_2}\nonumber\\
  &= \lim_{r \rightarrow \infty}\sum_{i=1}^3 \langle \gamma_N^i V, \gamma_D^i \cS_{\partial \Omega_1^{r_0}} \gamma_N^1 Z_1 \rangle_{\partial \Omega_i^r}- \langle \gamma_N^i V,\gamma_D ^i \cS_{\partial \Omega_2^{r_0}} \gamma_N^2 Z_2 \rangle_{\partial \Omega_i^r} + \langle\gamma_N^i V, \gamma_D^i
  \cD\mu\rangle_{\partial \Omega_i^r}.
  \label{eq:murep}
\end{align}
Since the Single-Layer potential is continuous across boundaries and $V$ is assumed to be smooth, we can conclude that the first two terms in \cref{eq:murep} are equal to zero, so that
\begin{equation}
\label{eq:DirJumpD}
 \langle \gamma_N V , \mu \rangle_{\Gamma} =\lim_{r \rightarrow \infty} \sum_{i=1}^3 \langle \gamma_N^i V, \gamma_D^i \cD\mu \rangle_{\partial \Omega_i^r} , 
\end{equation}
which, combined with \cref{eq:jumpsDL} and $U=-\cD\mu$, yields 
$$\lim_{r \rightarrow \infty} \sum_{i=1}^3 \langle \gamma_N^i U, \gamma_D^i V \rangle_{\partial \Omega_i^r}= 0,$$
which further implies that $[\gamma_N U ] = 0$.
Moreover, repeating the procedure in \cref{eq:murep} for the product $\langle \gamma_N U, \mu\rangle_{\Gamma}$, together with our result that the Neumann jump of $U$ is zero, yields
$$
\langle \mathcal{W} \mu, \mu \rangle_\Gamma = \langle \gamma_N U , \mu \rangle_{\Gamma} = \lim_{r\rightarrow \infty} \sum_{i=1}^3 \langle \gamma_N^i U, \gamma_D^i U\rangle_{\partial \Omega_i^r}. 
$$
Finally, by Green's formula and using that $U$ solves the Laplace equation on $\Omega_i^r$, we obtain 
\begin{align}
\label{eq:WsemDef}
  \lim_{r\rightarrow \infty} \sum_{i=1}^3 \langle \gamma_N^i U, \gamma_D^i U\rangle_{\partial \Omega_i^r} =
  \lim_{r\rightarrow \infty} \sum_{i=1}^3 \int\limits_{\Omega_i^r}\nabla U(\bx)\cdot\nabla U(\bx)\ \d\bx
  = \lim_{r\rightarrow \infty} \sum_{i=1}^3|U|^2_{H^1 (\Omega_i^r)} \geq 0. 
\end{align}
We proceed with showing that the hypersingular operator is injective.
Suppose that, for some given $\mu\in \widetilde{H}^{\half}(\Gamma)$, we have that $\mathcal{W} \mu = 0$.
Then, \cref{eq:WsemDef} allows us to deduce that $U$ is constant, and the decay properties of $U$ (stated in \Cref{lemma:DLdecay})
further allows us to deduce that $U=0$. Then, \cref{eq:Zrep} reduces to 
$$
Z = \cS_{\partial \Omega_1^{R_0}} \gamma_N^1 Z_1 - \cS_{\partial \Omega_2^{R_0}} \gamma_N^2 Z_2,
$$
which implies that $Z$ does not jump across $\Gamma$, which is only possible if $\mu = 0$. 
\end{proof}

Unfortunately, the previous result is not enough to show that $\mathcal{W} : \widetilde{H}^{\half}(\Gamma) \rightarrow (\widetilde{H}^{\half}(\Gamma))^\star$ is invertible. The missing ingredient to show the ellipticity of the hypersingular operator is a bound of the form
$$
\sum_{i=1}^n| U|_{H^1(\Omega_i^r)}^2 \gtrsim \Vert \mu \Vert^2_{\widetilde{H}^{\half}(\Gamma)}. 
$$
At first glance, it would seem that this could follow from \cref{eq:DirJumpD}, as we should have, for $V \in \C^{\infty}_0(\IR^2)$, that
\begin{equation}
\label{eq:failedbound}
    \langle \gamma_NV, \mu \rangle_\Gamma \lesssim \lim_{r \rightarrow \infty}\sum_{i=1}^n \Vert\gamma_N^i V \Vert_{H^{-\half}(\partial \Omega_i^r)} \Vert \gamma_D^i U \Vert_{H^{\half}(\partial \Omega_i^r)}.
\end{equation}
If we had that the space of Neumann traces of functions in $\C^\infty_0 (\IR^2)$ were dense in $\widetilde{H}^{\half}(\Gamma)^\star$,
we could take the supremum over all $V\in\C^{\infty}_0(\IR^2)$ with unit norm in $\widetilde{H}^{\half}(\Gamma)^\star$ to get, in the 
left hand side of \cref{eq:failedbound}, the norm of $\mu$ in $\widetilde{H}^\half(\Gamma)$. However, since this density does not hold, we are unable to connect \cref{eq:failedbound} with the $\widetilde{H}^\half(\Gamma)$ norm of $\mu$ so as to replicate the
arguments in, for example, \cite[Thm.~3.5.3]{Sauter:2011}.

\begin{remark}
Although the analysis has been presented for the triple junction geometry displayed in \Cref{fig:NeumanGeo},
the framework extends to general multi-arc configurations $\Gamma$ equipped with a normal field $\bm n$, given that the
following condition holds: there exists an index subset $\widetilde{I} \subset \{1,\hdots, n\}$ such that: 
\begin{enumerate}
  \item $\Gamma \subset \cup_{i \in \widetilde{I}} \partial \Omega_i$.
  \item If we denote the exterior normal vector of $\Omega_i$ as $\bm{n}_i$ for $i \in \widetilde{I}$, then either $$\bm{n}|_{\widehat{\Gamma}_i} = \bm{n}_i|_{\widehat{\Gamma}_i} \qquad\text{or}\qquad \bm{n}|_{\widehat{\Gamma}_i} = -\bm{n}_i|_{\widehat{\Gamma}_i},$$ for each $i \in \widetilde{I}$.
\end{enumerate}
\end{remark}

\section{Singular Behavior of Solutions to the Dirichlet Problem} \label{sec:SingDir}
The purpose of this section is to investigate the behavior of solutions to the
boundary integral equation associated with the Dirichlet problem introduced in 
\cref{eq:DirBIE}.  We will show that the classical analysis for singularities on polygonal boundaries \cite{costabel1985boundary, heuer1996hp} can be extended to an special geometry, the symmetric triple junction. For other geometries we have not been able to carry out the rigorous analysis and we will limit to present some hypothesis based on numerical experiments.

\subsection{Symmetric Triple Junction.}
\label{sec:striplejuct}
We consider a symmetric triple-junction geometry consisting of three straight line segments meeting at a common point (the junction center), with equal opening angles between consecutive segments, namely
$$\theta = \frac{2\pi}{3}.$$
As mentioned before, we will characterize the singular behavior of the solution to the boundary integral equation in \cref{eq:DirBIE} in a neighborhood of the junction point by extending the classical analysis developed for polygonal domains and open arcs (\emph{c.f.}~\cite{heuer1996hp,costabel1985boundary}).

Following \cite{heuer1996hp}, we embed the geometry in the complex plane $\IC$ and
idealize each segment as an infinite ray emanating from the origin. Without loss of
generality, we may choose on of the rays to coincide with the positive real
axis. Then, the following boundary integral equation
$$
\int\limits_\Gamma \frac{-1}{2 \pi } \log \Vert\bx - \by\Vert \lambda(\by)\ \d \by = f(\bx), 
$$
is essentially equivalent\footnote{Essentially equivalent in the sense that certain finite-dimensional correction operators are omitted. We refer to \cite{heuer1996hp} for further details.} to the system
\begin{align}
\label{eq:systIntEq}
\begin{pmatrix}
V_0 & V_\theta & V_{2 \theta} \\
V_\theta & V_0 & V_\theta \\
V_{2 \theta} & V_\theta & V_0
\end{pmatrix}
\begin{pmatrix}
  \lambda_0(x) \\ \lambda_1(x) \\
  \lambda_2(x ) 
\end{pmatrix}
= \begin{pmatrix}
f_0(x) \\
f_1(x ) \\
f_2(x )
\end{pmatrix} \quad x \geq 0,
\end{align}
where the integral operators are given by
$$V_\eta \psi = \frac{-1}{2\pi}\int\limits_{0}^\infty \log \left\lvert 1-\frac{y}{x}e^{i\eta}\right\rvert \psi(y)\ \d y, $$
for each $\eta\in\IR$, $\lambda_0(x) = \lambda(x), \lambda_1(x) = \lambda(xe^{i\theta}), \lambda_2(x) = \lambda(xe^{i 2 \theta})$, and the functions $f_0,f_1, f_2$ are defined analogously. Moreover, since $\theta = \frac{2\pi}{3}$ it holds that $V_{\theta} = V_{2\theta}$, which enables us to diagonalize the system in
\cref{eq:systIntEq} as
$$
\begin{pmatrix}
  -1 & -1 & \phantom{-}1 \\
  \phantom{-}1 & \phantom{-}0 & \phantom{-}1 \\
  \phantom{-}0 & \phantom{-}1 & \phantom{-}1 
\end{pmatrix}
\begin{pmatrix}
  V_0 - V_\theta & 0 & 0 \\
  0 & V_0 - V_\theta & 0 \\
  0 & 0 & V_0 + 2 V_\theta
\end{pmatrix}
\frac{1}{3}
\begin{pmatrix}
-1 & \phantom{-}2 &-1 \\
-1 & -1 & \phantom{-}2 \\
\phantom{-}1 & \phantom{-}1 & \phantom{-}1
\end{pmatrix}
\begin{pmatrix}
  \lambda_0(x) \\ \lambda_1(x) \\
  \lambda_2(x ) 
\end{pmatrix}
 = 
 \begin{pmatrix}
f_0(x) \\
f_1(x ) \\
f_2(x )
\end{pmatrix},
$$
or, equivalently, 
$$
\begin{pmatrix}
  V_0 - V_\theta & 0 & 0 \\
  0 & V_0 - V_\theta & 0 \\
  0 & 0 & V_0 + 2 V_\theta
\end{pmatrix}
\begin{pmatrix}
  \sigma_0(x) \\
  \sigma_1(x) \\
  \sigma_2(x)
\end{pmatrix}
= \begin{pmatrix}
  g_0(x)\\
  g_1(x) \\
  g_2(x)
\end{pmatrix},
$$
where 
$\sigma_0 = \frac{1}{3}(2\lambda_1 -\lambda_0-\lambda_2)$, $\sigma_1 = \frac{1}{3}(2 \lambda_2-\lambda_0-\lambda_1)$, and $\sigma_2 = \frac{1}{3}(\lambda_0+\lambda_1+\lambda_2)$ and the functions $g_0$, $g_1$ and $g_2$ are analogously defined from $f_0$, $f_1$ and $f_2$. Moreover, since the operators in the previous system are the same as in \cite{heuer1996hp}, it follows from the cited work that by taking the Mellin transform we get that
$$
{\scriptstyle 
\begin{pmatrix}
  \frac{\cosh(\pi s)}{s\sinh(\pi \lambda)} - \frac{\cosh((\pi-\theta) s)}{s\sinh(\pi s)} & 0 & 0 \\
  0 &   \frac{\cosh(\pi s)}{s\sinh(\pi s)} - \frac{\cosh((\pi-\theta)s)}{s \sinh(\pi s)} & 0 \\
  0 & 0 &   \frac{\cosh(\pi s)}{s\sinh(\pi s)} +2 \frac{\cosh((\pi-\theta)s)}{s\sinh(\pi s)}
\end{pmatrix}
\begin{pmatrix}
  \widehat{\sigma}_0(s-i) \\
  \widehat{\sigma}_1(s-i) \\
  \widehat{\sigma}_2(s-i)
\end{pmatrix}
= \begin{pmatrix}
  \widehat{g}_0(s)\\
  \widehat{g}_1(s) \\
  \widehat{g}_2(s)
\end{pmatrix},}
$$
where $\widehat{g}_j$ and $\widehat{\sigma}_j$ denote the respective Mellin transforms of $g_j$ and $\sigma_j$ for $j\in\{0,1,2\}$ and  $s \in \IC$ with $\Im({s})\in(0,1)$. The orders of the singularities of $\sigma_0$ and $\sigma_1$
are then determined through the poles of
$$
  \frac{(s+i)\sinh(\pi (s+i))}{\cosh(\pi(s+i))-\cosh((\pi-\theta)(s+i))} \widehat{g}_j(s+i),
$$
for $j\in\{0,1\}$, while the order of the singularities of $\sigma_2$ are determined by the poles of
$$
  \frac{(s+i)\sinh(\pi (s+i))}{\cosh(\pi(s+i))+2\cosh((\pi-\theta)(s+i))} \widehat{g}_2(s+i).
$$
The detailed computation of these poles is provided in Appendix \ref{app:compPoles}, from where we derive that
$$
\sigma_j(x) = \sum_{k=1}^{n} (\sigma_j^1)_k x^{3k-\frac{5}{2}}+(\sigma_j^2)_k x^{3k-1}+(\sigma_j^3)_k x^{3k-1} \log|x|+\sigma_j^0(x),
$$
for $j\in\{0,1\}$ and
$$
\sigma_2(x) = \sum_{k=0}^n (\sigma_2^1)_k x^{\half+3k}+
(\sigma_2^2)_k x^{3k}+(\sigma_2^3)_k x^{3k+1}+ \sigma_2^0(x), 
$$
where $\{(\sigma_j^n)_k\}_{k \in \IN_0}$ are the coefficients of each expansion and $\sigma_j^0(x)$ are functions that could only have weaker singularities, as detailed in \cite{heuer1996hp}.

These representations allow us to recover the asymptotic behavior of $\lambda_0$, $\lambda_1$, and $\lambda_2$. In particular, one finds that the leading singular term behaves as $x^{\half}$ near the triple junction, which coincides with the corresponding singularity for two segments meeting at an angle of $\frac{2\pi}{3}$.

\subsection{Galerkin Discretization.}
\label{sec:Discretrization}
The previous strategy relies heavily on the diagonalization of the system in \cref{eq:systIntEq} and is, therefore, not easily extended to more general arrangements. A study of the singular behavior of solutions to the Dirichlet problem is beyond the scope of the present work. Therefore, we investigate more general geometries through numerical simulations only. Consequently, we continue by constructing discrete approximations of the solution to \cref{eq:Veq} which,
according to \Cref{thrm:uniquesol}, require finite dimensional approximations of $\widetilde{H}_*^{-\half}(\Gamma)$. However, the problem may be reformulated in $\widetilde{H}^{-\half}(\Gamma)$ by means of the following mixed formulation: seek $\lambda \in \widetilde{H}^{-\half}(\Gamma)$ and $\alpha \in \IR$ such that
\begin{align}
\label{eq:modsystem}
\begin{aligned}
  \langle  \varphi, \mathcal{V} \lambda \rangle_\Gamma + \alpha \langle  1, \varphi\rangle_\Gamma &= \langle f, \varphi \rangle_{\Gamma}, \quad \forall \varphi \in \widetilde{H}^{-\half}(\Gamma), \\
  \langle 1,\lambda  \rangle_\Gamma &= 0.
  \end{aligned}
\end{align}
We refer to \cite[Sec.~3.5]{STEI07} regarding the equivalence of both formulations. 
Moreover, we will assume that the arcs which form part of $\Gamma$, namely $\Gamma_1, \ldots, \Gamma_n$, are parametrized by analytic mappings
\[
\br_1, \ldots, \br_n : [-1,1] \rightarrow \IR^2,
\]
each of which has a nowhere-vanishing tangent vector. 
We also assume that, for every $i \in\{1, \ldots, n\}$, the endpoint $\br_i(-1)$ is the only point of the arc $\Gamma_i$ that belongs to $\partial \Gamma$. With these assumptions at hand, we continue by introducing two families of conforming discrete spaces, which will be used to approximate the solution of the integral equation associated with the Dirichlet problem through the standard Galerkin method.


\subsubsection{Spectral Discretization}
\label{sec:SpectralImplementation}
For $N \in \IN$, we denote by $P_N$ the space of polynomials of degree at most $N$ on $[-1,1]$. Then, for any $\alpha, \beta \in [-\half,1)$ we introduce the following weighted polynomial space:
\begin{align*}
    P_N^{\alpha, \beta} := \{(1-t)^\alpha (1+t)^\beta p_N(t)\ :\ p_N\in P_N\},
\end{align*}
as well as the parametrized spaces
\begin{align*}
P^{\alpha,\beta}_{N,i} =\left\lbrace p_N^{\alpha,\beta} \circ \br_i^{-1} : \ p_N^{\alpha,\beta} \in P_N^{\alpha, \beta}\right\rbrace,
\end{align*}
for each $i\in\{1,\hdots,n\}$.
Finally, for $\bm{\alpha}, \bm{\beta} \in [-\half,1)^n$ we define the discrete space 
$$
\IP_N^{\bm{\alpha }, \bm{\beta}} := \left \lbrace p : \Gamma \rightarrow \IC \ : 
p|_{\Gamma_i} \in P^{\alpha_i,\beta_i}_{N,i}\ \forall i \in\{1,\hdots, n\}
\right\rbrace.
$$
The presence of the weight $(1-t)^\alpha (1+t)^\beta$ will allow us to capture potential singularities that may appear at the endpoints of each arc $\Gamma_i$. Based on previous results for open arcs (\emph{c.f.}~\cite{JHP20}), we expect that an appropriate selection of $\bm{\alpha},\bm{\beta}$ leads to improved convergence rates. However, there are some additional complications compared to the open arc case that will be discussed at the end of this section. 

For the numerical implementation of the spectral discretization spaces introduced above, we will extend the methods presented in \cite{JHP20}. In this previous work we considered integrals of the forms
\begin{align}
\label{eq:intscanonical}
\widehat{I}^1_{l,m}(i,j) &= 
\int\limits_{-1}^1 \int\limits_{-1}^1 G(\br_{i}(t)-\br_j(s)) \frac{T_m(s)}{w(s)}\frac{T_l(t)}{w(t)}\ \d s\ \d t, \\
\widehat{I}^2_{l}(i) &= 
\int\limits_{-1}^1 f(\br_i(t))\frac{T_l(t)}{w(t)}\ \d t,
\end{align}
where $\{T_k\}_{k=1}^N$ denotes the first kind Chebishev polynomials of first kind, $l,m\in\{0,\hdots, N\}$, $w(t) = \sqrt{1-t^2}$ and $\br_i$ and $\br_j$ were assumed parametrization of two disjoint arcs (if $j \neq i$ ).
Then these two type of  integrals were approximated using an FFT based algorithm, which leads to an algorithm that converges exponentially in the number of function evaluations. 

 Moreover, we showed based on \cite[Chap.~6,7]{trefethen2013approximation} that $\widehat{I}^2_l(i)$ decays exponentially with respect to $l$. Which we later use to establish the exponential convergence of the spectral method in \cite{JHP20}. 

For the multi-arc setting, we choose the Chebishev polynomials as a basis of $P_N$, thus the discretization of the corresponding linear and bilinear forms in \cref{eq:modsystem} leads to integrals of the form
\begin{align*}
{I}^1_{l,m}(i,j) &= 
\int\limits_{-1}^1 \int\limits_{-1}^1 G_{i,j,\bm{\alpha},\bm{\beta}}(t,s) \frac{T_m(s)}{w(s)}\frac{T_l(t)}{w(t)}\ \d s\ \d t, \\
{I}^2_{l}(i) &= 
\int\limits_{-1}^1 f_{i,\bm{\alpha},\bm{\beta}}(t)\frac{T_l(t)}{w(t)}\ \d t,
\end{align*}
for all $l,m\in\{0,\hdots,N\}$, where
\begin{align*}
G_{i,j,\bm{\alpha},\bm{\beta}}(t,s) &\coloneq G(\br_i(t)-\br_i(s)) p_{\alpha_i,\beta_i}(t)p_{\alpha_j,\beta_j}(s),\\
f_{i,\bm{\alpha},\bm{\beta}}(t) &\coloneq f(\br_i(t)) p_{\alpha_i,\beta_i}(t),
\end{align*}
with $p_{\alpha,\beta}(t) = (1-t)^{\alpha+\half}(1+t)^{\beta+\half}$, and $i,j \in \{1,\hdots,n\}$.

The approximation of these type of integrals is obtained exactly as in \cite{JHP20}, but with $f_{i,\bm{\alpha},\bm{\beta}}$ and $G_{i,j,\bm{\alpha},\bm{\beta}}$ taking the roles of $f$ and $G$ in \eqref{eq:intscanonical}. 

The change on the functions $f, G$ implies that in contrast to the case in \cite{JHP20},
 if $\alpha_i,\beta_i \neq \pm \frac{1}{2}$, the functions $f_{i,\bm{\alpha},\bm{\beta}}$ and $G_{i,j,\bm{\alpha},\bm{\beta}}$, are not smooth. This has two implications:

\begin{enumerate}
    \item The overall approximation process do not converge exponentially in the number of function evaluations. 

    This could in theory be remedied by replacing the FFT approximation method by the Jacobi quadrature rules \cite{HaleTownsend2013}.
    Furthermore, if the the Jacobi polynomials are selected as basis of $P_N$ instead of the Chebishev polynomials we would obtain the Galerkin equivalent of the method in \cite{HOUGH1985359}. However for the propose of this work this do not alter the result in any relevant manner. 

    \item 
    The coefficients $I_l^2$ do not longer decay exponentially with $l$.

    This is in fact a fundamental property of the coefficients and not of the approximation method. Moreover if we follow our previous analysis in \cite{JHP20} and \cite{Tao22}, the exponential decay of these coefficients was the essential argument to stablish the corresponding convergence of the spectral method. 
    
    As consequence we not longer expect the spectral method to converge exponentially, but instead to have a algebraic rate which is determined by the decay of $I_l^2$.     
\end{enumerate}

Finally, if we notice that since now the arcs parametrized by $\br_i,\br_j$ are no longer disjoint, additional logarithmic singularities could occur in the computation of $I^1_{l,m}$. However, they are balanced by the behavior of $p_{\alpha,\beta}(t)$ near the end points. 

\subsubsection{Low-Order Discretization}
\label{sec:loworderDisc}

Given $N \in \IN$ and $a, b \geq 1$, we consider two graded meshes of the interval $[0,1]$
with nodes $\{\hat{x}^1_k\}_{k=0}^N$ and $\{\hat{x}^2_k\}_{k=0}^N$ given as
$$
\hat{x}^1_k:=\left(\frac{k}{N}\right)^a\quad\text{and}\quad \hat{x}^2_k:=\left(\frac{k}{N}\right)^b, \quad \forall k \in\{0,\hdots, N\}. 
$$
As seen from the previous definitions, both meshes concentrate nodes near $0$. We then map these points to the intervals $[0,1]$ and $[-1,0]$, respectively, in such a way that they are refined towards $1$ (with grading factor $a$) and $-1$ (with grading factor $b$). The resulting mesh of $[-1,1]$ is then denoted as $\widehat{\mathcal{T}}^{a,b}_N$. We then define the corresponding meshes on the arcs $\Gamma_1, \hdots, \Gamma_n $ as 
\begin{align*}
\mathcal{T}^{a,b}_{N,i} := \{\tau\ :\ \tau = \br_i(\hat\tau),\ \forall\hat\tau\in\widehat{\mathcal{T}}^{a,b}_N\}
\end{align*}
for each $i\in\{1,\hdots,n\}$, and denote by $\cS^{a,b}_{N,i}$ the space spanned by piecewise constant functions on the elements of $\mathcal{T}^{a,b}_{N,i}$.
Finally, for $\bm{a}, \bm{b} \in [0,1]^n$ we define the discrete space 
$$
\mathbb{S}^{\bm{a},\bm{b}}_N := \left\lbrace s:\Gamma \rightarrow \IR \ : s|_{\Gamma_i} \in \cS^{a_i, b_i}_{N,i}, \ i = 1,\hdots,n\right\rbrace.
$$
The factors $\bm{a}$ and $ \bm{b}$ play analogous roles to those of the parameters $\bm{\alpha}$ and $\bm{\beta}$ in the spectral discretization, in the sense that they allow us to properly capture the singular behavior of the solution near the endpoints and junctions (we refer to \cite[Chapter~7.1]{gwinner2018advanced} for further details). In particular, singularities of the form $t^r$ with $0<r<1$, and $t$ in a neighborhood of $0$, are well captured by using a mesh whose refinement parameter near $0$ is larger than $\frac{3}{2(r+1)}$.
An appropriate choice of the parameters $\bm{a}$ and $\bm{b}$ should allow us to recover the optimal convergence rate of $N^{-\threehalf}$ for the error measured in the norm of $\widetilde{H}^{-\half}(\Gamma)$.

\subsection{Numerical Experiments}
\label{sec:NumRes}

In this section, we analyze the numerical solution of the Dirichlet problem for several geometries and consider Galerkin discretizations of \cref{eq:modsystem} using both the spectral and low-order schemes. In each case, we measure the error in the $\widetilde{H}^{-\half}(\Gamma)$ norm with respect to an \emph{overkill} (highly resolved) solution. 
For the right-hand side $f$, we \ra{choose} the function
$$
f(\bx) = \gamma_D(2x_1 + x_2),
$$
and as test geometries, we primarily focus on triple junctions as is \Cref{fig:triplejunction}. The explicit parametrizations of the arcs, in this case, are
\begin{equation}
    \label{eq:param}
    \begin{aligned}
  \br_1(t) &= \left( 0, \frac{t-1}{2} \right) \\
  \br_2(t) &= \frac{1-t}{\sqrt{2}}\left(\cos \theta_1, \sin \theta_1  
  \right)\\
   \br_3(t) &= \frac{1-t}{\sqrt{2}}\left(\cos \theta_2, \sin \theta_2  
  \right),
\end{aligned}
\end{equation}
and we refer to \Cref{fig:triplejunction} for a clear visualization of the geometry and the meaning of the parameters $\theta_1$ and $\theta_2\in (0,\frac{3}{2}\pi)$.

Additionally, in \Cref{sec:dt}, we also consider a more general (\emph{Double Triple Junction}) geometry, visible in \Cref{fig:dtGeo}. 

For the implantation of the low-order scheme, we rely on the Gypsilab library \cite{alouges2018fem}, while the implementation of the spectral discretization was carried out in Matlab.

\begin{figure}
  \centering

\tikzset{every picture/.style={line width=0.75pt}} 

\begin{tikzpicture}[x=0.75pt,y=0.75pt,yscale=-1,xscale=1]

\draw  (317,126) -- (317,249) ;
\draw  (317,126) -- (479,59) ;
\draw  (164,56) -- (317,126) ;
\draw [dash pattern={on 0.84pt off 2.51pt}] (317,126) -- (490,126) ;
\draw [dash pattern={on 4.5pt off 4.5pt}] (383,102) .. controls (395.5,106) and (404.5,119) .. (403.5,126) ;
\draw [dash pattern={on 4.5pt off 4.5pt}] (219,81) .. controls (333,35) and (429,92) .. (449,127) ;

\draw (376,108.4) node [anchor=north west][inner sep=0.75pt]  {$\theta _{1}$};
\draw (307,67.4) node [anchor=north west][inner sep=0.75pt]  {$\theta _{2}$};
\draw (324,234.4) node [anchor=north west][inner sep=0.75pt]  {$\Gamma _{1}$};
\draw (429,52.4) node [anchor=north west][inner sep=0.75pt]  {$\Gamma _{2}$};
\draw (163,38.4) node [anchor=north west][inner sep=0.75pt]  {$\Gamma _{3}$};

\end{tikzpicture}






  \caption{Triple junction geometry.}
  \label{fig:triplejunction}
\end{figure}

\subsubsection{Symmetric Triple Junction}

We consider the triple junction geometry described through the parametrization in \cref{eq:param}, and begin by choosing the parameter values as $\theta_1 = \frac{\pi}{6}$ and $\theta_2 = \frac{5\pi}{6}$, leading to the symmetric triple junction as described in \Cref{sec:striplejuct}.
As previously discussed, the solution exhibits a singularity of order $\half$ near the triple point, while near the endpoints of the geometry it exhibits the same singular behavior as for an open arc, namely of order $-\half$.

This observation implies that, for the spectral discretization, we should select $\bm{\alpha } = \big( \alpha,\alpha, \alpha \big)$, with $\alpha = \half$ and $\bm{\beta} = \big( \frac{-1}{2},\frac{-1}{2},\frac{-1}{2}\big)$. For the low-order discretization, we choose $\bm{a} = (a,a,a)^\top,$ with $a >1$, and $\bm{b} = (b,b,b)^\top$ with $b > 3$. 

In \Cref{fig:symetricerror}, we report the errors of the spectral method for different values of $\alpha$. As predicted, the best convergence rates are obtained for $\alpha = \half$, and also $\alpha = -\half$. The latter is consequence of the fact that if the solution behaves as $t^{\half}\psi(t)$, with $\psi$ a smooth function, then it can equivalently be written as $t^{-\half}\widehat{\psi}(t)$, where $\widehat{\psi}$ is also smooth. 

\begin{figure}
  \centering
    \includegraphics[width=0.5\linewidth]{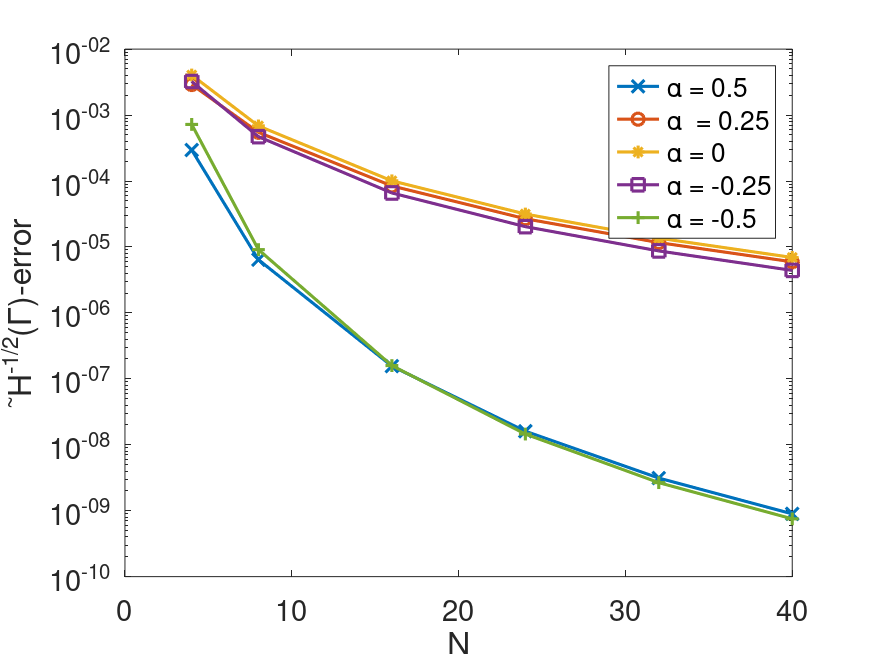}
  \caption{Convergence of the spectral method for the symmetric triple junction. Error measured in the $\widetilde{H}^{-\half}(\Gamma)$-norm, with respect to an overkill solution obtained using $N= 80$. }
  \label{fig:symetricerror}
\end{figure}

On \Cref{tab:p0T}, we present the results for the convergence of the the low-order discretization. As predicted, the convergence rate is close to $-\threehalf$ when $a >1$ and $b >3$.

\begin{table}[ht]
\begin{tabular}{ll|ll|ll}
\multicolumn{2}{l|}{$a= 1.1, b= 1.1$}                 & \multicolumn{2}{l|}{$a = 1.1, b= 2.1$}                & \multicolumn{2}{l}{$a = 1.1, b= 3.1$}                  \\ \hline
\begin{tabular}[c]{@{}l@{}}Points\\ per arc\end{tabular} & Error  & \begin{tabular}[c]{@{}l@{}}Points\\ per arc\end{tabular} & Error  & \begin{tabular}[c]{@{}l@{}}Points\\ per arc\end{tabular} & Error  \\
8                            & 0.9796 & 8                            & 0.591097 & 8                            & 0.446410 \\
16                            & 0.6075 & 16                            & 0.245440 & 16                            & 0.140612 \\
24                            & 0.4834 & 24                            & 0.164593 & 24                            & 0.080432 \\
32                            & 0.3830 & 32                            & 0.115673 & 32                            & 0.055012 \\
40                            & 0.3502 & 40                            & 0.092370 & 40                            & 0.036689 \\
48                            & 0.3282 & 48                            & 0.077846 & 48                            & 0.031226 \\
56                            & 0.2852 & 56                            & 0.066889 & 56                            & 0.024451 \\ \hline
Convergence rate                     & -0.6226 & Convergence rate                     & -1.1150 & Convergence rate                     & -1.4789 
\end{tabular}
\caption{Low-order discretization of the symmetric triple junction, errors measured in the $\widetilde{H}^{-\half}(\Gamma)-$norm with respect to a solution obtained using 256 points per arc.}
\label{tab:p0T}
\end{table}

\subsubsection{Unsymmetrical Triple Junction} 

Let us again consider the parametrizations given in \cref{eq:param}. We analyze two different configurations:
\begin{enumerate}
  \item $\theta_1= 0, \theta_2 = \pi$.
  \item $\theta_1 =(1-0.42)\pi-\frac{\pi}{2}, \theta_2 = (1+0.42)\pi-\frac{\pi}{2}$.  
\end{enumerate}
The first case corresponds to a $T$-shaped configuration. According to the classical analysis of singularities for polygons \cite[Chap.~7]{gwinner2018advanced}, we should expect singular behaviors of the forms $t^0, t^1$ in the central point, since the angles between the segments are either $\pi$ or $\frac{\pi}{2}$. This implies that the optimal convergence rate of $N^{-\frac{3}{2}}$ for the low-order discretization should be achieved by selecting (using the same notation as in the symmetric case) $a > 1$ and $b > 3$. The corresponding results are presented in \Cref{tab:errorUnCase1}.

Another interesting property that we can observe is the behavior $t^1$ (near the central point) is only present in $\Gamma_1$, while in $\Gamma_2$, and $\Gamma_3$ the solution behaves as $t^0$. We corroborate this observation by examining the behavior of the numerical solution as a function of the parameter $t$ in the parametrization in \Cref{fig:sollowordercase1}. 

\begin{table}[ht]
\centering
\begin{tabular}{ll}
\multicolumn{2}{l}{$a = 1.1, b= 3.1$}                  \\ \hline
\begin{tabular}[c]{@{}l@{}}Points\\ per arc\end{tabular} & Error  \\
8                            & 0.439238 \\
16                            & 0.137821 \\
24                            & 0.079154 \\
32                            & 0.055012 \\
40                            & 0.036896 \\
48                            & 0.032113 \\
56                            & 0.025528 \\ \hline
Convergence rate                     & -1.4622 
\end{tabular}
\caption{Low-order discretization of the triple junction with angles $\theta_1 = 0, \theta_2 =\pi$, errors measured in the $\widetilde{H}^{-\half}(\Gamma)$ norm with respect to a solution obtained using 256 points per arc.}
\label{tab:errorUnCase1}
\end{table}

\begin{figure}
  \centering
  \includegraphics[width=0.5\linewidth]{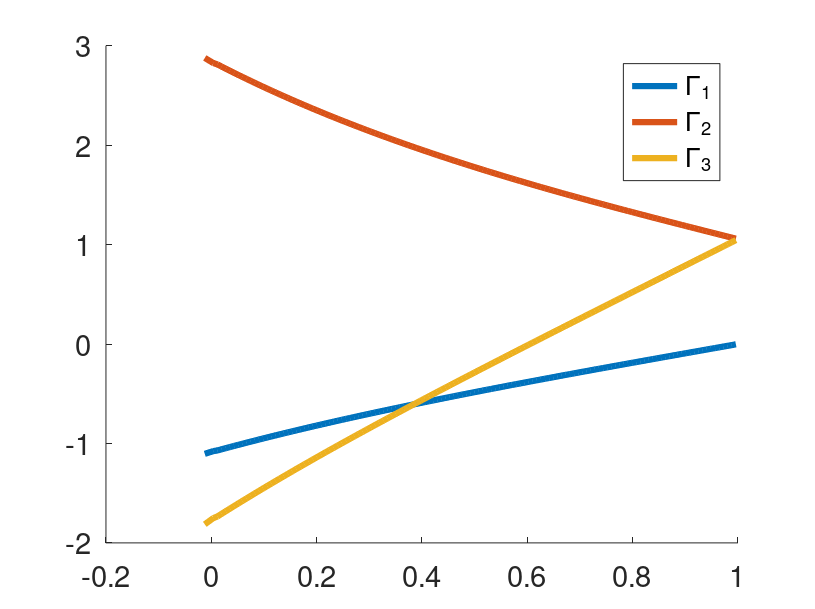}
  \caption{Plot of the solution for $\theta_1 = 0,\theta_2 = \pi $, for $t>0$}
  \label{fig:sollowordercase1}
\end{figure}

We can further corroborate the predicted behavior using the spectral discretization. For this configuration, the best convergence rate is expected when selecting $\alpha = 0$, which is confirmed by the results presented in \Cref{fig:spectralunsymcase1}. We also observe that the errors saturate more rapidly than in the symmetric case. We attribute this behavior to the fact that when $\alpha_i \neq \pm \half$, the integrand involves functions with unbounded derivatives, as discussed in \Cref{sec:SpectralImplementation}, which significantly amplifies quadrature errors.

\begin{figure}
  \centering
  \includegraphics[width=0.5\linewidth]{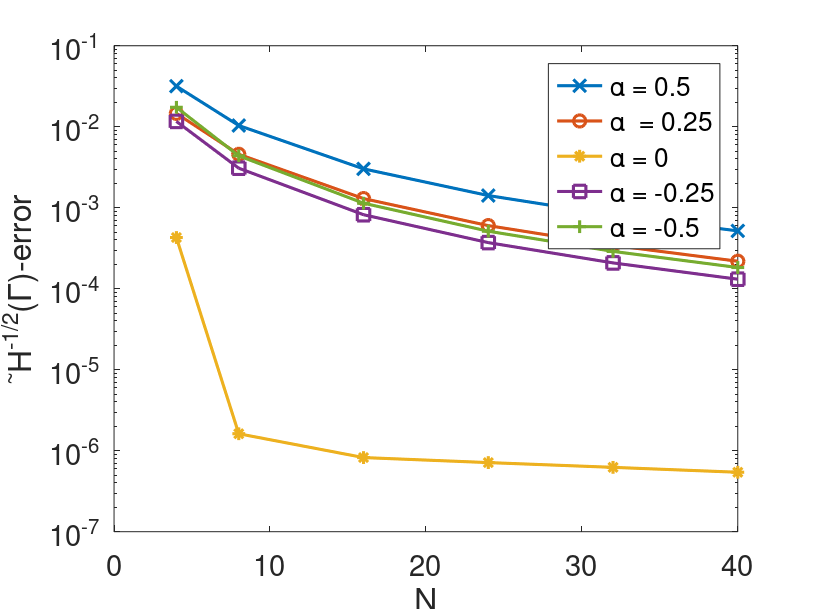}
  \caption{Convergence of the spectral method for the unsymmetrical case, with $\theta_1= 0, \theta_2= \pi$. Error measured in the $\widetilde{H}^{-\half}(\Gamma)$-norm, with respect to an overkill solution obtained using $N= 80$}
  \label{fig:spectralunsymcase1}
\end{figure}

For the second configuration, the predicted singular behavior near the central point, based on the theory of singularities for polygonal domains, should be of the form $t^{\frac{21}{29}}$, or $t^{\frac{4}{21}}$. Based on the results for the previous case, we hypothesize that the singularity of order $\frac{21}{29}$ occurs on $\Gamma_1$, while on $\Gamma_2,\Gamma_3$ the singularity is of the order $\frac{4}{21}$. Consequently, the spectral method is expected to achieve its best convergence rate by selecting $\alpha = (\frac{21}{29},\frac{4}{21},\frac{4}{21})$, and the corresponding results are presented in \Cref{fig:spectralunsymcase2}. For this configuration, we do not present results using the low-order scheme, as does not provide sufficient resolution to yield meaningful information.

\begin{figure}
  \centering
  \includegraphics[width=0.5\linewidth]{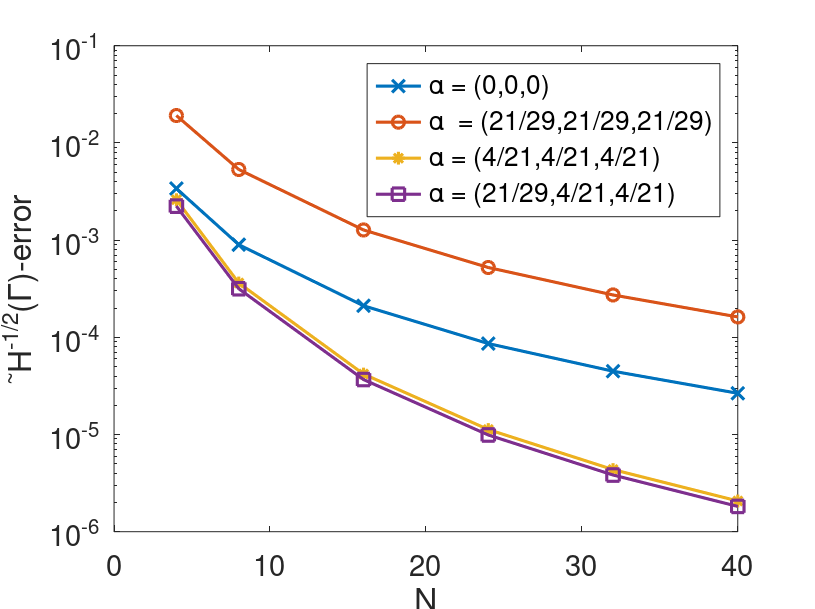}
  \caption{Convergence of the spectral method for the unsymmetrical case, with $\theta_1= (1-0.42)\pi-\frac{\pi}{2}, \theta_2= (1+0.42)\pi-\frac{\pi}{2}$.  Error measured in the $\widetilde{H}^{-\half}(\Gamma)$-norm, with respect to an overkill solution obtained using $N= 80$.}
  \label{fig:spectralunsymcase2}
\end{figure}

We observe that, in this last case, the difference between choosing $\alpha = (\frac{4}{21},\frac{4}{21},\frac{4}{21})$ and $\alpha = (\frac{21}{29},\frac{4}{21},\frac{4}{21})$ is relatively small. This can be explained by the following computation 
$$t^{\frac{21}{29}} = t^{\frac{4}{21}}t^{\frac{441}{116}},$$
and since $\frac{441}{116} \approx 3.8 \approx 4$ the singularities of the form $t^{\frac{21}{29}}$ can be accurately approximated by polynomials multiplied by a factor of $t^{\frac{4}{21}}$. 

Based on these results, we conjecture that the optimal choice of parameters for the spectral method is governed by the smallest (in magnitude) singular exponent. A more concrete conjecture is elaborated in \Cref{sec:conclusions}.

\subsubsection{Double Triple Junction}
\label{sec:dt}
As a final example, we consider a double triple-junction geometry, illustrated in \Cref{fig:dtGeo}. The exact parametrizations are given by
\begin{figure}
  \centering  \includegraphics[width=0.5\linewidth]{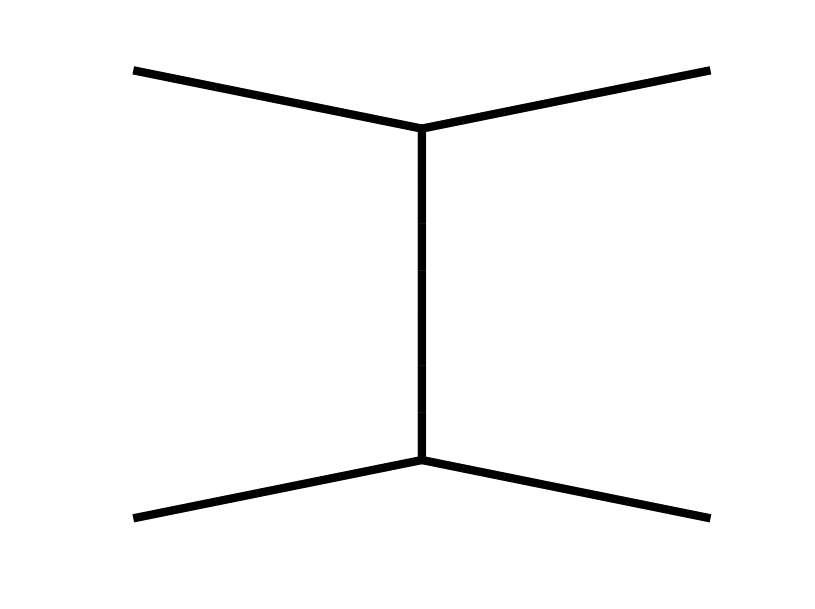}
    \caption{Double triple junction geometry.}
    \label{fig:dtGeo}
\end{figure}
\begin{align*}
  \br_1(t) &= (0,t), \\
  \br_2(t) &= \frac{1-t}{\sqrt{2}}(\cos \theta_1 , \sin \theta_1+1), \\
    \br_3(t) &= \frac{1-t}{\sqrt{2}}(\cos \theta_2 , \sin \theta_2+1),\\
     \br_4(t) &= \frac{1-t}{\sqrt{2}}(\cos \theta_3 , \sin \theta_3-1), \\
    \br_5(t) &= \frac{1-t}{\sqrt{2}}(\cos \theta_4 , \sin \theta_4-1).  
\end{align*}
We fix the angles as $\theta_1 = (1-0.42)\pi-\frac{\pi}{2}, \theta_2 = (1+0.42)\pi-\frac{\pi}{2}, \theta_3 = -\theta_1, \theta_4 = - \theta_2$. 
In this experiment, we compare the convergence of the spectral method for two different choices of parameters. First, we consider a non-singular selection
$$\bm{\alpha} = \bm{0}, \bm{\beta} = (0,-\half,-\half,-\half,-\half).$$
Second, we use the presumably optimal choice
$$\bm{\alpha} = (\frac{21}{29},\frac{4}{21},\frac{4}{21},\frac{4}{21},\frac{4}{21}), \quad \bm{\beta} =(\frac{21}{29},-\half,-\half,-\half,-\half).$$ The corresponding convergence results are presented in \Cref{fig:dtConvg}. 

\begin{figure}
  \centering  \includegraphics[width=0.5\linewidth]{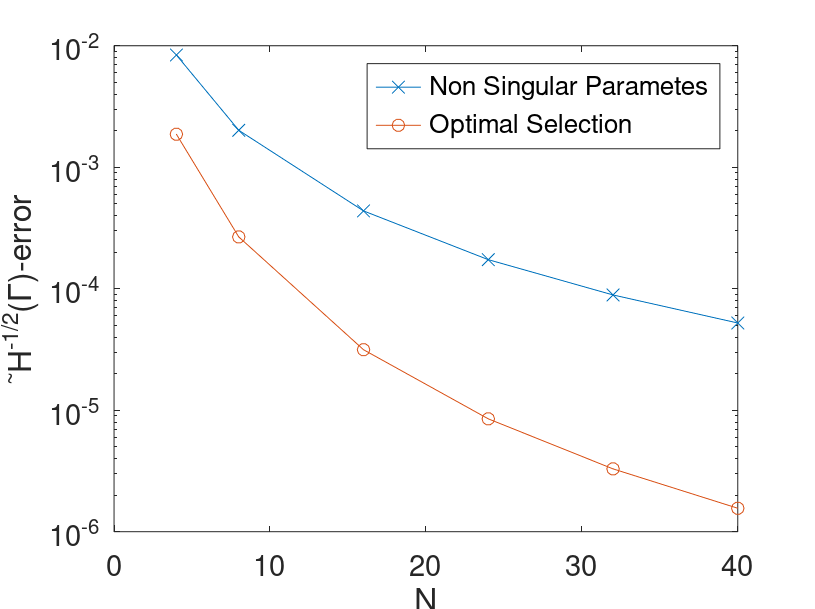}
    \caption{Convergence of the spectral method for the double triple junction. }
    \label{fig:dtConvg}
\end{figure}


\section{Numerical Experiments for the Neumann Problem}
\label{sec:NeumannNumeric}

We now consider the numerical discretization of the integral equation
$$
\mathcal{W} \mu = f. 
$$
Its variational formulation is 
$$
\langle -\gamma_N \mathcal{D} \mu, v \rangle_\Gamma = \langle f, v \rangle_\Gamma, \quad v \in \widetilde{H}^{\half}(\Gamma). 
$$
We can further expand the left-hand side using the corresponding definitions to obtain
\begin{equation}
\begin{split}
    \langle -\gamma_N \mathcal{D} \mu, v \rangle_\Gamma =
\langle -\gamma_N^1 \mathcal{D}_{\jp{\widehat{\Gamma}_1}} \mu, v\rangle_{\jp{\widehat{\Gamma}_1}} - \langle -\gamma_N^2 \mathcal{D}_{\jp{\widehat{\Gamma}_1}} \mu, v\rangle_{\jp{\widehat{\Gamma}_2}}-\\\langle -\gamma_N^1 \mathcal{D}_{\jp{\widehat{\Gamma}_2}} \mu, v\rangle_{\jp{\widehat{\Gamma}_1}}+\langle -\gamma_N^2 \mathcal{D}_{\jp{\widehat{\Gamma}_2}} \mu, v\rangle_{\jp{\widehat{\Gamma}_2}}. 
\end{split}
\end{equation}
All the terms on the right-hand side of the latter equation are hypersingular operators on open arcs and can be regularized using the standard integration by parts formula \cite[Thm.~6.15]{STEI07} \footnote{The regularization can be used on open arcs if we assume that the test and trial functions are 0 at the end points of the corresponding open arcs.}.

For the discretization space, we restrict ourselves to a low-order scheme. Under the notation of \Cref{sec:loworderDisc}, we denote by $\mathcal{P}_{N,i}^{a,b}$ the space of piecewise linear and globally continuous functions on $\mathcal{T}_{N,i}^{a,b}$. For $\bm{a},\bm{b} \in [0,1]^n$ we define the discretization space: 
$$
\mathcal{P}_{N}^{\bm{a},\bm{b}} \coloneqq \left\lbrace
p \in \C^0(\Gamma) : p|_{\Gamma_i} \in \mathcal{P}_{N,i}^{a,b}, \ p(\partial \Gamma) = 0
\right\rbrace. 
$$
The implementation is again carried out using the Gypsilab library \cite{alouges2018fem}. We consider exclusively the symmetric triple junction of \Cref{sec:striplejuct}. 

For the first numerical experiment, we consider the right-hand side $$f(x_1,x_2) = \gamma_N( \frac{|x_1|x_1^3}{4}).$$ 
The results are provided in \Cref{tab:p11}. We observe that convergence rates greater than $2$ in the $L^2$-norm are achieved when refinement is applied toward the endpoints of the multi-arc. Furthermore, no special refinement is required at the branch point.

\begin{table}[ht]
\begin{tabular}{ll|ll|ll}
\multicolumn{2}{l|}{$a = 1.1, b= 1.1$}                & \multicolumn{2}{l|}{$a = 1.1, b= 3.1$}                & \multicolumn{2}{l}{$a = 3.1, b= 3.1$}                \\ \hline
\begin{tabular}[c]{@{}l@{}}Points\\ per arc\end{tabular} & L2- Error & \begin{tabular}[c]{@{}l@{}}Points\\ per arc\end{tabular} & L2- Error & \begin{tabular}[c]{@{}l@{}}Points\\ per arc\end{tabular} & L2- Error \\
8                            & 0.071800 & 8                            & 0.017621 & 8                            & 0.019603 \\
16                            & 0.027030 & 16                            & 0.002965 & 16                            & 0.003151 \\
32                            & 0.011293 & 32                            & 0.000613 & 32                            & 0.000643 \\
64                            & 0.004727 & 64                            & 0.000142 & 64                            & 0.000148 \\
128                           & 0.001713 & 128                           & 0.000036 & 128                           & 0.000037 \\ \hline
Convergence rate                     & -1.3472  & Convergence rate                     & -2.2359  & Convergence rate                     & -2.2610 
\end{tabular}
\caption{Convergence of the Neumann problem on a symmetric triple junction, with right-hand-side $f(x_1,x_2) = \gamma_N( \frac{|x_1|x_1^3}{4})$. Errors computed against an overkill solution with $256$ points per arc. }
\label{tab:p11}
\end{table}

For the second experiment, we consider the same configuration, but with the right-hand side
$$f(x_1,x_2) = \gamma_N (2x_1+x_2).$$
The results are provided in \Cref{tab:p12}. In contrast to the previous case, we are not able to achieve convergence rates greater than one, even when considering different refinement strategies. Based on the discussion in \Cref{sec:NeuProb}, we suspect that the difference between the two cases is that, in the first one, the right-hand side belongs to the range of $\mathcal{W}$, whereas in the second one it does not. 

Indeed, if we further analyze the solution of the second case near the branch point, we observe the presence of a potential jump discontinuity; see \Cref{fig:solNeumann}. The error is concentrated precisely at the branch point, as shown in \Cref{fig:errorNeumann}. Moreover, as is well known, functions exhibiting a jump discontinuity cannot belong to $\widetilde{H}^{\half}(\Gamma)$. This suggest that, in this case, the solution does not lie in the aforementioned space. For comparison, the solution and the errors for the first case are presented in \Cref{fig:main1}.

\begin{table}[ht]
\begin{tabular}{ll|ll|ll}
\multicolumn{2}{l|}{$a = 1.1, b= 1.1$}                & \multicolumn{2}{l|}{$a = 1.1, b= 3.1$}                & \multicolumn{2}{l}{$a = 3.1, b= 3.1$}                \\ \hline
\begin{tabular}[c]{@{}l@{}}Points\\ per arc\end{tabular} & L2- Error & \begin{tabular}[c]{@{}l@{}}Points\\ per arc\end{tabular} & L2- Error & \begin{tabular}[c]{@{}l@{}}Points\\ per arc\end{tabular} & L2- Error \\
8                            & 0.822352 & 8                            & 0.765724 & 8                            & 0.777658 \\
16                            & 0.511187 & 16                            & 0.488669 & 16                            & 0.454613 \\
32                            & 0.323321 & 32                            & 0.314334 & 32                            & 0.288772 \\
64                            & 0.189299 & 64                            & 0.185823 & 64                            & 0.181806 \\
128                           & 0.085397 & 128                           & 0.084296 & 128                           & 0.085262 \\ \hline
Convergence rate                     & -0.8169  & Convergence rate                     & -0.7958  & Convergence rate                     & -0.7973 
\end{tabular}
\caption{Convergence of the Neumann problem on a symmetric triple junction, with right-hand-side $f(x_1,x_2) = \gamma_N( 2x_1+x_2)$. Errors computed against an overkill solution with $256$ points per arc. }
\label{tab:p12}
\end{table}

\begin{figure}
  \centering
  \begin{subfigure}{0.45\textwidth}
    \centering
    \includegraphics[width=\textwidth]{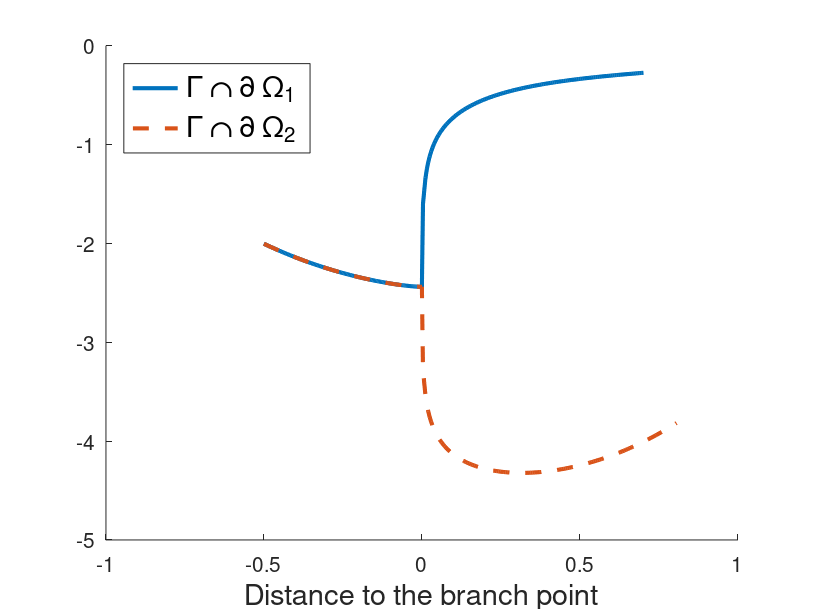}
    \caption{Plot of the solution $\mu$ as a function of the distance to the branch point. }
    \label{fig:solNeumann}
  \end{subfigure}
  \hfill
  \begin{subfigure}{0.42\textwidth}
    \centering
    \includegraphics[width=\textwidth]{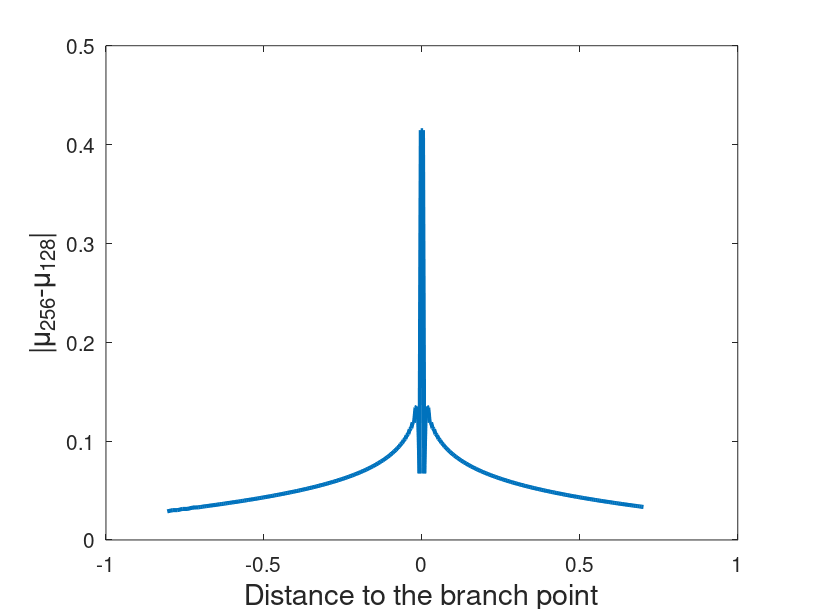}
    \caption{Difference between the solution with 128 and 256 points per arc, as a function of the distance to the branch point.}
    \label{fig:errorNeumann}
  \end{subfigure}
  \caption{Solution and error of the Neumann problem for $f(x_1,x_2) =\gamma_N( 2x_1-x_2)$. The reference variable is the distance to the branch point, with negative sign on $\Gamma_1$, and positive in $\Gamma_2$ and $\Gamma_3$.}
  \label{fig:main}
\end{figure}

\begin{figure}
  \centering
  \begin{subfigure}{0.45\textwidth}
    \centering
    \includegraphics[width=\textwidth]{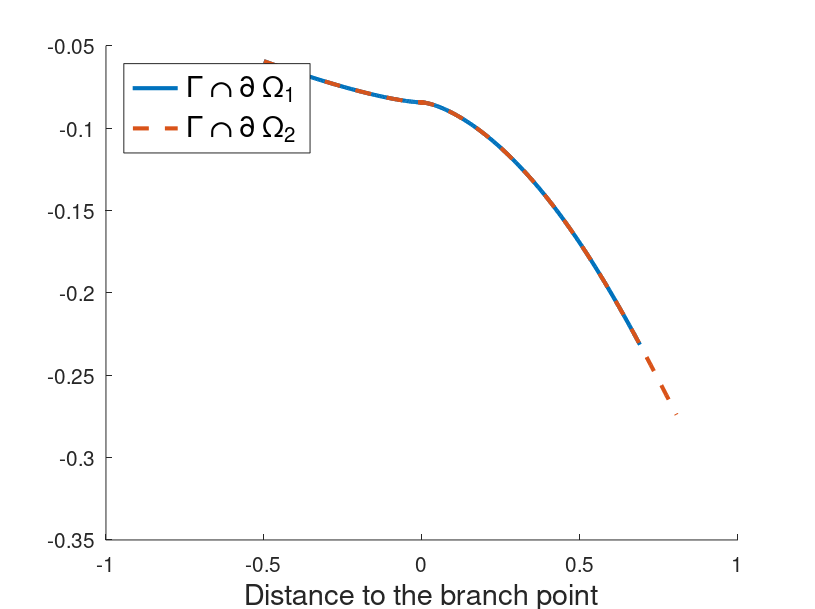}
    \caption{Plot of the solution $\mu$ as a function of the distance to the branch point. }
    \label{fig:solNeumann1}
  \end{subfigure}
  \hfill
  \begin{subfigure}{0.42\textwidth}
    \centering
    \includegraphics[width=\textwidth]{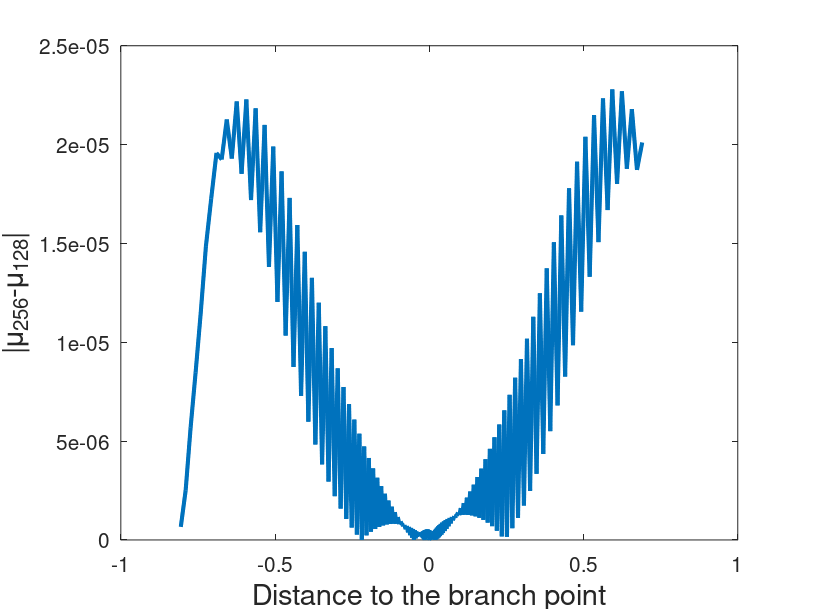}
    \caption{Difference between the solution with 128 and 256 points per arc, as a function of the distance to the branch point.}
    \label{fig:errorNeumann1}
  \end{subfigure}
  \caption{Solution and error of the Neumann problem for $f(x_1,x_2)= \gamma_N( \frac{|x|x^3}{4})$. The reference variable is the distance to the branch point, with negative sign on $\Gamma_1$, and positive in $\Gamma_2$ and $\Gamma_3$.}
  \label{fig:main1}
\end{figure}

  \section{Conclusions and Future Work}
  \label{sec:conclusions}
  
In this work, we extended the classical functional framework for boundary integral equations on open arcs to the more general setting of multi-arcs. This extension allows us to formulate both Dirichlet and Neumann boundary value problems for the Laplace equation as boundary integral equations posed on geometries featuring corners and branch points. For the Dirichlet problem, we established a well-posed integral formulation within a natural fractional Sobolev setting.

A key advantage of the proposed functional framework, compared with the more general approach in \cite{Claeys2013}, is that it yields a simpler and more transparent characterization of the functional spaces associated with multi-arcs. This, in turn, enables the use of classical Galerkin discretization schemes with minimal modification. As a consequence, our analysis provides a rigorous justification for numerical methods that have been previously employed in the literature, including those used in \cite{helsing2024helmholtzdirichletneumannproblems}.
  
From a numerical perspective, the Dirichlet problem appears to be well understood within the present framework. In contrast, the Neumann problem remains more challenging. As demonstrated by the numerical experiments in \Cref{sec:NeumannNumeric}, standard low-order discretizations of the hypersingular operator may fail to achieve satisfactory convergence rates, particularly when the right-hand side does not belong to the range of the operator. This indicates the need for improved discretization strategies or alternative formulations for the Neumann problem on multi-arcs, which we leave as an important direction for future work.
  
Since our integral formulations rely on single- and double-layer potentials defined on open arcs it is natural to expect that several classical analytical results can be generalized to the multi-arc setting. 
  
A central theme of this work has been the study of singular behavior for the Dirichlet problem on multi-arcs. Our numerical experiments confirm that, near the endpoints of a multi-arc, the solution exhibits the same type of singularity as in the classical open-arc case. At branch points, however, the singular behavior is governed by the angles between the intersecting arcs. Based on these observations, we formulate the following conjecture for triple junctions.
  
  \begin{conjecture}
    Consider a triple junction $\Gamma$ formed by the intersection of three line segments $\Gamma_1, \Gamma_2, \Gamma_3$, whose common point is $\bm{0} \in \IR^2$. Let $\theta_{i,j}$ denote the angle between $\Gamma_i$ and $\Gamma_j$ for $i,j \in \{1,2,3\}$, $i \neq j$. If $f$ is the restriction to $\Gamma$ of a smooth function, then the solution $\lambda$ of the integral equation
    $$\mathcal{V}\lambda = f,$$
    exhibits, along $\Gamma_i$, a leading-order singularity of the form
    $$
     t^{\min_{j\neq i}\left( \frac{\pi}{\theta_{i,j}}\right)-1}, \quad i =1,2,3,
    $$
    where $t$ is the distance to the branch point $\bm{0}$. 
  \end{conjecture}
We expect that this conjecture can be extended to more general multi-arc configurations with higher-order junctions. A rigorous analytical proof, together with a systematic study of its implications for numerical discretizations, constitutes an important topic for future research.

\bibliography{biblio}
\bibliographystyle{siam}

\appendix
\section{Proof of \Cref{lemma:dens} for Open Arcs}
\label{app:profDensOpenArc}

Let us consider the case where $\Gamma$ is an open arc with Lipchitz paramatrization. We adapt the argument of \cite[Thm.~3.40]{mclean2000strongly} to this setting. 

First consider $\Gamma = (-1,1)\times \{0\}$, and let $E = \{ U \in H^{1}(\IR^2): \gamma_D U= 0\}$. We take $\ell \in E^\star$ such that $\ell(\Phi) = 0$ for all $\Phi \in \C_{0}^\infty(\IR^2 \setminus \overline{\Gamma})$. Then there exist $W \in H^{-1}(\IR^2)$ such that $\langle W , \Phi\rangle_{H^{-1}(\IR^2), H^1(\IR^2)} = \ell(\Phi) $ for all $\Phi \in E$. This implies, by definition of the support of a distribution, that $W \in H^{-1}_{\overline{\Gamma}} \coloneq \{F \in H^{-1}(\IR^2) : \supp F \subset \overline{\Gamma}\}$. We notice that $H^{-1}_{\overline{\Gamma}} \subset H^{-1}_{\IR \times \{0\}}$, then by \cite[Lemma 3.39]{mclean2000strongly}, we have the following representation 
  $$
  W = v \otimes \delta, 
  $$
  where $v \in H^{-\frac{1}{2}}(\IR)$, and $\delta $ is the standard Dirac distribution. Since $\supp W \subset [-1,1]\times\{0\}$, we have that $\supp v \subset [-1,1]$, and by \cite[Theorem 3.29]{mclean2000strongly} we obtain that $v \in \widetilde{H}^{-\frac{1}{2}}((-1,1))$. Then for $\Psi \in E$ we have that 
  $$
  \ell(\Psi) = \langle W , \Psi\rangle_{H^{-1}(\IR^2),H^1(\IR^2)} = \langle v, \gamma_D \Psi\rangle_{(-1,1)} =0. 
  $$
  which proofs the results for the canonical arc $\Gamma = (-1,1)\times \{0\}$. For other open arcs the result is extended using the parametrization function. 

\section{Computation of Poles}
\label{app:compPoles}

We compute the poles of the functions arising in the Mellin-domain analysis. We distinguish two cases.
\begin{itemize}
  \item Case 1: We consider the poles of the function 
  $$ \frac{(s+i)\sinh(\pi (s+i))}{\cosh(\pi(s+i))-\cosh((\pi-\theta)(s+i))} \widehat{g}_j((s+i)), \quad j=0,1.$$
We decompose the analysis into the following contributions:
  \begin{enumerate}
    \item The zeros of $\sinh(\pi (s+i))$, are given by $$s = ki \quad k \in \IZ.$$ 
    \item 
    The poles of $\widehat{g}_j((s+i))$ are determined using the fact that $g$ is a linear combination of the functions $g_0,g_1,g_2$ which are analytic. Following \cite{heuer1996hp}, we conclude that the poles are located at
    $$s = ik, \quad k =0 \cup \IN.$$ 
    Notice that $s =-i$ is not a pole, since by definition of $g_j$, $j=0,1$ are continuous at the origin. 
    \item 
    To determine the zeros of $$\cosh(\pi(s+i))-\cosh((\pi-\theta)(s+i))$$, we use an hyperbolic function identity, so these are equal to the zeros of
    $$
    \sinh(\frac{\theta s}{2}) \sinh(\frac{2\pi-\theta}{2}s).
    $$
    Then by replacing $\theta = \frac{2 \pi}{3}$, we obtain two families of first order zeros, $s = i(3k-1)$ and $s = i(\frac{3k}{2}-1)$, for $k=0 \cup \IN$. We have ignored zeros such that the imaginary part of $s$ is smaller that $-1$, as these would imply that the overall solution is outside of the standard Sobolev spaces for the Dirichlet problem, as in \cite{heuer1996hp}.
  \end{enumerate}
  Collecting all contributions and recalling that $s=-i$ is not a pole, we obtain that $s = i(3k-1)$, $k=1,2,\hdots$, are second order poles and $s = i\frac{6k-5}{2}$, $k=1,2,\hdots$ are simple poles. 
  \item Case 2: We now consider the function
  $$  \frac{(s+i)\sinh(\pi (s+i))}{\cosh(\pi(s+i))+2\cosh((\pi-\theta)(s+i))} \widehat{g}_2((s+i)), $$
 The analysis of the numerator is identical to Case~1, so we focus on the zeros of the denominator. These are characterized by the equation
  $$
  \cosh(\pi x)+2\cosh(\frac{\pi}{3}x)=0,
  $$
  $$
  e^{\pi x} +e^{-\pi x}+2(e^{\frac{\pi}{3} x}+e^{-\frac{\pi}{3} x})=0.
  $$
  Writing this equation in exponential form yields
    $$
  e^{\frac{\pi}{3} x}(e^{\frac{2\pi}{3} x}+2)+ e^{-\frac{\pi}{3} x}(e^{-\frac{2\pi}{3} x}+2)=0.
  $$
  Introducing the change of variables $z = e^{\frac{\pi}{3}x}$, we obtain
      $$
  z(z^2+2)+\frac{1}{z}\left(z^{-2}+2\right)=0.
  $$
Solving this equation gives
  $$
  z = \pm i , z = \pm \half(1+i\sqrt{3}), z =\pm \half(1-i\sqrt{3}) . 
  $$
Taking logarithms, we obtain the corresponding values of $x$,
  $$
  x = i \left( \threehalf+3k \right), \ x = i(1+3k),\ x = i(2+3k), \quad k \in \IZ$$ 
Consequently, the poles are located at
  $$ s = i \left( \half+3k \right), s = i(3k),\ s = i(1+3k), \quad k \in 0 \cup \IN.$$
\end{itemize}

\end{document}